\DeclareMathOperator{\arccoth}{arccoth}
\DeclareMathOperator{\diag}{diag}
\DeclareMathOperator{\ric}{Ric}
\DeclareMathOperator{\hess}{Hess}
\DeclareMathOperator{\tr}{tr}
\newtheorem{thm}{Theorem}[section]
\newtheorem{pro}[thm]{Proposition}
\newtheorem{lem}[thm]{Lemma}
\newtheorem{cor}[thm]{Corollary}
\numberwithin{equation}{section}
\title{Classification of superpotentials for cohomogeneity one Ricci solitons}
\author{Qiu Shi Wang}
\address{Mathematical Institute\\ University of Oxford \\
	Oxford\\
	OX2 6GG\\
	United Kingdom} \email{wangqs@maths.ox.ac.uk}
\date{14 November 2024}
\thanks{The author is supported by the Engineering and Physical Sciences Research Council [grant number EP/W524311/1] and the Fonds de recherche du Québec -- Nature et technologies [numéro de dossier 331732].}
\begin{document}

\begin{abstract}
	We classify superpotentials for the Hamiltonian system corresponding to the cohomogeneity one gradient Ricci soliton equations. Aside from recovering known examples of superpotentials for steady solitons, we find a new superpotential on a specific case of the Bérard Bergery--Calabi ansatz. The latter is used to obtain an explicit formula for a steady complete soliton with an equidistant family of hypersurfaces given by circle bundles over $S^2\times S^2$. There are no superpotentials in the non-steady case in dimensions greater than 2, even if polynomial coefficients are allowed. We also briefly discuss generalised first integrals and the limitations of some known methods of finding them.
\end{abstract}

\maketitle

\textbf{Keywords:} Cohomogeneity one, Ricci solitons, Hamiltonian systems, superpotentials

\textbf{MSC codes:} 53C25, 53C30, 37J35

\section{Introduction}

The Einstein equations can be significantly simplified when the underlying manifold is assumed to have certain symmetries. For instance, on a homogeneous space $G/K$, all curvature quantities can be computed from the Lie algebra $\mathfrak{g}$ of $G$ and the critical points of the scalar curvature functional can be used to find Einstein manifolds \cite{WangZiller1986}. A natural generalisation of homogeneous spaces is the notion of a cohomogeneity one space, i.e. $G$ acts on the manifold via isometries with codimension one principal orbits. In this context, the Einstein equations simplify to a system of ordinary differential equations (ODEs) in the transverse direction to the principal orbits \cite{Berard-Bergery1982,EschenburgWang2000}. In certain cases, the abovementioned ODEs are explicitly integrable, producing new examples of Einstein manifolds \cite{Berard-Bergery1982}.

In \cite{DancerWang2000}, A. Dancer and M. Wang consider these ODEs as a constrained Hamiltonian system and study the existence of generalised first integrals, which often correspond to previously known integrable cases of the ODEs. In \cite{DancerWang2005}, the same authors study superpotentials, which simplify the Hamiltonian system to a first-order ODE system which is often explicitly integrable. The existence of superpotentials of a certain natural form is subject to many constraints arising from the convex geometry of the weight vectors of the scalar curvature formula of \cite{WangZiller1986}. This allows a full classification of superpotentials in the (most interesting) Ricci-flat case, which is meticulously detailed by Dancer and Wang in \cite{DancerWang2005,DancerWang2008,DancerWang2011b}.

A Ricci soliton is given by a Riemannian manifold $(M,\hat g)$ with a complete vector field $X$ on it such that for some $\epsilon\in\mathbb{R}$,
\begin{equation}\label{solitonequation}
	\ric(\hat g) + \frac{1}{2}\mathscr{L}_X \hat g + \frac{\epsilon}{2}\hat g=0.
\end{equation}
When $\epsilon<0$, $\epsilon=0$ or $\epsilon>0$, the soliton is said to be \textit{shrinking}, \textit{steady} or \textit{expanding} respectively. 
Ricci solitons generate simple solutions to the Ricci flow. They are a generalisation of Einstein metrics, as (\ref{solitonequation}) reduces to the Einstein condition when $X$ is a Killing vector field of $(M,\hat g)$. In the cohomogeneity one context, (\ref{solitonequation}) also simplifies to a system of ODEs \cite{DancerWang2011} and a Hamiltonian approach is also possible \cite{BetancourtDancerWang2016}, with the configuration space extended to include the soliton potential. Generalised first integrals and superpotentials are also defined in \cite{BetancourtDancerWang2016}, and some examples are given, leading to explicit formulae for e.g. the 5-dimensional Bryant soliton. The main objective of the present work is to classify superpotentials for cohomogeneity one solitons using constraints from convex geometry, analogously to the work done in \cite{DancerWang2005,DancerWang2008,DancerWang2011b} for the Ricci-flat condition.

Our main result is Theorem \ref{classification}, which states that the only scalar curvature weight vector configurations admitting a superpotential are the Bérard Bergery--Calabi ansatz and a small number of low-dimensional examples. The result is analogous to Theorems 1.12 and 1.14 of \cite{DancerWang2011b}, which classify superpotentials for the cohomogeneity one Ricci-flat Einstein equations. We also consider the more restrictive non-steady case, which admits no superpotentials with constant coefficients, and just one, with 1-dimensional fibres, when we allow the coefficients to be polynomials.

All superpotentials in the classification had previously been found in \cite{BetancourtDancerWang2016}, except the one given in Theorem \ref{classification}(\ref{newsup}). We integrate the first-order subsystem corresponding to the latter and verify that suitable boundary conditions at the singular orbit are satisfied to ensure smooth extension of the metric. As a result, we obtain explicit formulae for a steady complete soliton on the space foliated by hypersurfaces given by $U(1)$-bundles over $S^2\times S^2$, detailed in Proposition \ref{explicitsoliton}. There is also a brief discussion of generalised first integrals. Thus, the present work is to a certain extent a sequel of \cite{BetancourtDancerWang2016}.

The paper is organised as follows. First, we introduce in Section \ref{notation} the Hamiltonian formalism in question. Section \ref{supersection}, composing the bulk of the paper, details the classification of superpotentials in the steady case. In Section \ref{nonsteadysection}, we discuss the non-steady case and superpotentials with polynomial coefficients. In Section \ref{examplesection}, we integrate the first-order subsystem for the newly found superpotential and write down explicit formulae for the corresponding $6$-dimensional soliton. Finally, in Section \ref{firstintegralsection}, we discuss the limitations of strategies used in \cite{BetancourtDancerWang2016} to find generalised first integrals.

\section{Hamiltonian formalism for cohomogeneity one gradient solitons}\label{notation}

In this section, we present the Ricci soliton equations in the cohomogeneity one setting. We then introduce the Hamiltonian formalism and fix the notation which will be used throughout the rest of the paper.

A \textit{gradient Ricci soliton} on a smooth manifold $M$ consists of a Riemannian metric $\hat{g}$ and a real-valued function $u$ on $M$ such that for some $\epsilon\in\mathbb{R}$,
\begin{equation}\label{gradsolitondef}
	\ric \hat{g} + \hess u + \frac{\epsilon}{2}\hat{g}=0.
\end{equation}
The vector field $X$ in (\ref{solitonequation}) is given by the gradient of $u$.

Let $G$ be a compact Lie group acting on $(M,\hat g)$ via isometries with cohomogeneity one. There is a dense open submanifold $M_0\subset M$ diffeomorphic to $I\times G/K$, where $K\subset G$ is a closed subgroup and $I\subset \mathbb{R}$ is an interval with coordinate $t$. Each $\{t\}\times G/K$ is a \textit{principal orbit}, and we choose a unit speed geodesic $\gamma(t)$ intersecting each principal orbit $\hat g$-orthogonally. Then, a general cohomogeneity one metric on $M_0$ takes the form
\begin{equation*}
	\hat{g}=dt^2 + g_t,
\end{equation*}
where $g_t$ is a 1-parameter family of $G$-invariant metrics on $G/K$. We suppose further that there is precisely one \textit{singular orbit} $G/H$, with $K\subsetneq H$, such that $\hat{g}$ extends to a smooth metric on $M=M_0\cup \{G/H\}$. Let $n$ be the dimension of $G/K$. By averaging over the principal orbits, it suffices to consider $G$-invariant soliton potentials $u=u(t)$.

Let the dot denote differentiation with respect to $t$. In the cohomogeneity one case, the soliton condition (\ref{gradsolitondef}) is equivalent to the ODEs \cite{DancerWang2011}
\begin{align}
	-\tr \dot{L} - \tr (L^2) + \ddot{u} + \frac{\epsilon}{2}&=0\label{NN}\\
	ric-\tr(L)L-\dot{L}+\dot{u}L+\frac{\epsilon}{2}I_n &= 0,\label{XY}
\end{align}
where $ric$ is the Ricci endomorphism $\ric(g_t)(X,Y)=g_t(ric(X),Y)$ and $L$ is the shape operator of the principal orbit, defined by $L(X)=\nabla_X N$, where $\nabla$ is the flat connection of $\hat{g}$ and $N$ is the unit normal with $\nabla_N N = 0$. In fact, we have $L=\frac{1}{2}g_t^{-1}\circ \dot{g}_t$, where $g_t$ is viewed as an endomorphism of the tangent space to the principal orbit.

In the case that $M$ is constructed out of an equidistant family of hypersurfaces which are not necessarily homogeneous spaces, the Ricci soliton equation is (\ref{NN}),(\ref{XY}) with the additional equation
\begin{equation*}
	d(\tr L) - \delta^\Delta L =0,
\end{equation*}
where $\delta^\Delta$ is the codifferential for vector-valued 1-forms on the hypersurfaces.

Given a principal orbit $G/K$, let $\mathfrak{g}$ and $\mathfrak{k}$ denote the Lie algebras of $G$ and $K$ respectively. Fix an $\mathrm{Ad}(K)$-invariant decomposition
\begin{equation*}
	\mathfrak{g} = \mathfrak{k}\oplus \mathfrak{p},
\end{equation*}
so that $\mathfrak{p}$ is identified with the tangent space to a principal orbit. We define a Hamiltonian formalism for the system with configuration space $S^2_+(\mathfrak{p})^K\times \mathbb{R}$, where the first factor is identified via a fixed metric $Q$ on $\mathfrak{p}$ to the space of $G$-invariant Riemannian metrics on $G/K$, and the second factor is the soliton potential. In the monotypic case, we let
\begin{equation*}
	\mathfrak{p}=\mathfrak{p}_1 \oplus\dots\oplus \mathfrak{p}_r
\end{equation*}
be a decomposition of $\mathfrak{p}$ into $r$ pairwise inequivalent $\mathrm{Ad}(K)$-irreducible $Q$-orthogonal summands. Let $d_i = \dim \mathfrak{p}_i$, so that $n=\sum_{i=1}^r d_i$. We write each metric endomorphism $q\in S^2_+(\mathfrak{p})^K$ in exponential coordinates $q_i$ as the diagonal matrix $\diag(e^{q_1} I_{d_1}, \dots, e^{q_r}I_{d_r})$. Let $p_i$ denote the corresponding conjugate momenta. Finally, we write the scalar curvature $S$ of the principal orbit as \cite{WangZiller1986}
\begin{equation}\label{scalarcurvature}
	S=\sum_{w\in \mathcal{W}} A_w e^{w\cdot q}.
\end{equation}
We then define the extended vectors
\begin{equation*}
	\mathbf{d}=(d_1,\dots,d_r,-2),\quad \mathbf{q}= (q_1,\dots,q_r,u), \quad \mathbf{p}=(p_1,\dots, p_r,\phi).
\end{equation*}
We also extend the elements of $\mathcal{W}$ by appending a zero as their $(r+1)$-th component, and define $\tilde{\mathcal{W}}= \mathcal{W} \cup \{\mathbf{0}_{\mathbb{R}^{r+1}}\}$. Consider the bilinear form $J$ given by
\begin{equation}\label{polarisedJ}
	J(\mathbf{p}, \mathbf{p'}) = -\left( \sum_{i=1}^r \frac{p_i{p_i}'}{d_i} + \frac{\phi}{2} \sum_{i=1}^r {p_i}' + \frac{\phi'}{2} \sum_{i=1}^r p_i + \frac{n-1}{4}\phi\phi'\right)
\end{equation}
and the \textit{Hamiltonian}
\begin{equation*}
	\mathcal{H}(\textbf{p},\textbf{q})=e^{-\frac{1}{2}\textbf{d}\cdot\textbf{q}} J(\textbf{p}) - e^{\frac{1}{2}\textbf{d}\cdot\textbf{q}}\left(E-\lambda(n+1-u) + \sum_{w\in\mathcal{W}} A_w e^{\textbf{w}\cdot \textbf{q}}\right),
\end{equation*}
where $\lambda=-\epsilon$ and $E\in\mathbb{R}$ is a parameter. Then it is shown in \cite[Theorem 5]{BetancourtDancerWang2016} that
\begin{thm}\label{hamiltoniantheorem}
	Given a principal orbit $G/K$, consider on the symplectic manifold\\ $(S^2_+(\mathfrak{p})^K\times \mathbb{R})\times(S^2(\mathfrak{p}^*)^K\times \mathbb{R}^*)$ the Hamiltonian $\mathcal{H}$. Then the integral curves of $\mathcal{H}$ lying within $\{\mathcal{H}=0\}$ correspond to solutions to the cohomogeneity one Ricci soliton equations (\ref{NN}) and (\ref{XY}).
\end{thm}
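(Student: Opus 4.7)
The plan is to verify Theorem \ref{hamiltoniantheorem} by a direct computation of Hamilton's equations and comparing them term-by-term with \eqref{NN}, \eqref{XY}. Since the Hamiltonian splits as $\mathcal{H}=e^{-\frac{1}{2}\mathbf{d}\cdot\mathbf{q}}J(\mathbf{p})-e^{\frac{1}{2}\mathbf{d}\cdot\mathbf{q}}V(\mathbf{q})$ with $V(\mathbf{q})=E-\lambda(n+1-u)+\sum_w A_w e^{\mathbf{w}\cdot\mathbf{q}}$, the first step is to introduce the reparametrisation $ds/dt=e^{\frac{1}{2}\mathbf{d}\cdot\mathbf{q}}$ so that the conformal factor in front of $J$ disappears along solutions; this turns the Hamiltonian flow into a geodesic-like flow in the natural arc-length parameter on the principal orbits, and at this point one expects $t$ to coincide with the parameter of the geodesic $\gamma$ in the geometric problem.

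Next I would compute the velocity equations. Since $J$ is quadratic in $\mathbf{p}$, $\dot{q}_i=\partial\mathcal{H}/\partial p_i$ expresses $p_i$ as an affine function of $\dot q_i$ and $\dot u$, and similarly $\dot u=\partial\mathcal{H}/\partial\phi$ gives $\phi$ in terms of these. Using $L=\tfrac12 g_t^{-1}\dot g_t$ and the exponential parametrisation, $\dot q_i$ is $2$ times the common eigenvalue of $L$ on $\mathfrak{p}_i$, so $\operatorname{tr}L=\tfrac12\sum d_i\dot q_i$; solving the linear system should identify $p_i$ (up to the conformal factor) with $d_i$ times this eigenvalue shifted by $-\dot u$, and $\phi$ with a linear combination reflecting the coupling between $L$ and the drift $\dot u$ encoded in the $\hess u$ term of \eqref{gradsolitondef}. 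The cross terms $\tfrac{\phi}{2}\sum p_i'$ in \eqref{polarisedJ} and the coefficient $\tfrac{n-1}{4}$ in front of $\phi^2$ are precisely what is needed to make this Legendre transform yield the correct shift.

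The third step is to compute the equations of motion for the momenta. Writing $S=\sum_w A_w e^{\mathbf{w}\cdot\mathbf{q}}$, Wang–Ziller gives $\partial S/\partial q_i$ as the component of the Ricci endomorphism on $\mathfrak{p}_i$ (up to a dimensional factor); so the terms from $\partial\mathcal{H}/\partial q_i$ involving $A_w$ reconstruct $\operatorname{ric}|_{\mathfrak{p}_i}$, the terms differentiating the $e^{\pm\frac12\mathbf{d}\cdot\mathbf{q}}$ factors reconstruct $-\operatorname{tr}(L)L+\dot{u}L$ together with a $\lambda/2$ piece from $-\lambda(n+1-u)$, and after substituting back the expression for $p_i$ in terms of $L$ one should land exactly on the components of \eqref{XY} on each $\mathfrak{p}_i$. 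Meanwhile $\dot\phi=-\partial\mathcal{H}/\partial u$ only picks up the $\lambda u$ contribution and should match a trace identity consistent with the gradient soliton condition.

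The last step is to impose $\mathcal{H}=0$. Moving the potential to the other side and substituting the expressions of $p_i,\phi$ in terms of $L$ and $\dot u$ should produce exactly the conservation law \eqref{NN}: the quadratic form $J(\mathbf{p})$ will reconstruct $-\operatorname{tr}(L^2)-(\dot u)^2+(\operatorname{tr}L)^2$-type terms, the scalar curvature piece contributes $S$, and the $E,\lambda$ terms supply the correct constant-of-motion role of $E$ together with the $\epsilon/2$ coefficient. The principal obstacle, and the only nontrivial bookkeeping, is the careful tracking of the cross terms between $\phi$ and the $p_i$ in $J$: the precise coefficients $\tfrac{n-1}{4}$ and $\tfrac{1}{2}$ are dictated by requiring that, after the Legendre transform, the Hessian term $\ddot u$ in \eqref{NN} and the $\dot u L$ term in \eqref{XY} appear with the geometrically correct coefficients, and verifying this match is where all the real work lies; once that algebra is done, everything else reduces to the standard Hamiltonian formulation of cohomogeneity one Einstein equations extended by the soliton potential direction.
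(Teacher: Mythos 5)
The paper does not prove this statement itself: it is quoted verbatim from Theorem 5 of Betancourt--Dancer--Wang, so the only fair comparison is with the standard direct verification, which is indeed the route you sketch (Legendre transform, Hamilton's equations, constraint $\mathcal{H}=0$ giving (\ref{NN})). Your overall plan is therefore the right one, but two specific steps are off as written, and the part you yourself identify as ``where all the real work lies'' is only asserted, not carried out. First, the reparametrisation $ds/dt=e^{\frac{1}{2}\mathbf{d}\cdot\mathbf{q}}$ is not only unnecessary, it works against the statement you are proving: the Hamiltonian is built precisely so that the flow parameter of $\mathcal{H}$ \emph{is} the geometric arclength $t$ (this is why the kinetic term carries the factor $e^{-\frac{1}{2}\mathbf{d}\cdot\mathbf{q}}$, and why the first-order subsystem (\ref{firstordersubsystem}) retains $\mathbf{v}^{-1}$). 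Rescaling away that factor means you are computing the flow of $e^{\frac{1}{2}\mathbf{d}\cdot\mathbf{q}}\mathcal{H}$, whose trajectories agree with those of $\mathcal{H}$ on $\{\mathcal{H}=0\}$ only up to reparametrisation; the theorem is about integral curves of $\mathcal{H}$ itself, so you must either drop this step or explicitly undo the time change before matching with (\ref{NN}), (\ref{XY}).

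Second, the claim that $\dot\phi=-\partial\mathcal{H}/\partial u$ ``only picks up the $\lambda u$ contribution'' is incorrect: $u$ is the last component of $\mathbf{q}$ and the last component of $\mathbf{d}$ is $-2$, so differentiating in $u$ also hits both prefactors $e^{\mp\frac{1}{2}\mathbf{d}\cdot\mathbf{q}}$, producing terms proportional to $e^{-\frac{1}{2}\mathbf{d}\cdot\mathbf{q}}J$ and $e^{\frac{1}{2}\mathbf{d}\cdot\mathbf{q}}\bigl(E-\lambda(n+1-u)+\sum_w A_w e^{\mathbf{w}\cdot\mathbf{q}}\bigr)$; it is exactly the combination of these with the constraint $\mathcal{H}=0$ that makes the $\phi$-equation consistent with (\ref{NN}), so this term cannot be waved away. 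Finally, the decisive content of the theorem is the coefficient bookkeeping in the Legendre transform --- checking that the cross terms $\frac{\phi}{2}\sum_i p_i'$ and the coefficient $\frac{n-1}{4}$ in (\ref{polarisedJ}) invert to give $p_i$ and $\phi$ as the correct combinations of the $L$-eigenvalues and $\dot u$, and that the constraint then reproduces (\ref{NN}) with the right sign of $\epsilon=-\lambda$ --- and your proposal defers all of this to ``should identify''/``should land exactly''. As a verification of a statement that is itself a computation, the write-up is a plausible outline of the standard argument rather than a proof; to complete it you need to perform the inversion of $J$, the momentum equations, and the constraint substitution explicitly, with the two corrections above.
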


\section{Superpotentials for steady solitons}\label{supersection}

A technique to simplify the Hamiltonian system is to find a \textit{superpotential}, i.e. a scalar function $f$ on the phase space of the Hamiltonian $\mathcal{H}$ such that $\mathcal{H}(\textbf{q}, df(\textbf{q}))=0$. Then the constrained integral curve condition of Theorem \ref{hamiltoniantheorem}, and thus the Ricci soliton equations, simplify to the first-order subsystem (\ref{firstordersubsystem}). For a steady soliton, in terms of the extended vectors, the superpotential condition is equivalent to
\begin{equation}\label{supercondition}
	J(\nabla f, \nabla f) = e^{\mathbf{d}\cdot\mathbf{q}}\left( E + \sum_{w\in \mathcal{W}} A_w e^{\mathbf{w}\cdot \mathbf{q}}\right),
\end{equation}
where $\nabla$ is the gradient with respect to $\mathbf{q}$. We make the ansatz
\begin{equation}\label{superansatz}
	f=\sum_{\mathbf{c}\in\mathcal{C}} f_{\mathbf{c}} e^{\mathbf{c}\cdot\mathbf{q}} \quad \mathrm{with} \quad |\mathcal{C}|<\infty,
\end{equation}
for some constants $f_\textbf{c}\neq 0$. Then (\ref{supercondition}) becomes
\begin{equation}\label{superpotentialcases}
	\sum_{\mathbf{a}+\mathbf{c}=\mathbf{b}} J(\mathbf{a},\mathbf{c}) f_{\mathbf{a}}f_{\mathbf{c}} =
	\begin{cases*}
		A_w & if $\mathbf{b} = \mathbf{d}+\mathbf{w}$  for some $\mathbf{w}\in\mathcal{W}$\\
		E & if $\mathbf{b}=\mathbf{d}$\\
		0 & otherwise.	
	\end{cases*}
\end{equation}
We will assume that the zero vector is not in $\mathcal{C}$, as it would only contribute trivially to the superpotential. We recall three important results from \cite{BetancourtDancerWang2016}.
\begin{lem}\label{lemma11}
	Let $\mathbf{v}, \mathbf{w}\in\mathbb{R}^{r+1}$ be vectors such that $v_{r+1} = w_{r+1} = 0$. Then
	\begin{equation*}
		J(\mathbf{v}+\mathbf{d}, \mathbf{w}+\mathbf{d}) = 1-\sum_{i=1}^r \frac{v_iw_i}{d_i}.
	\end{equation*}
\end{lem}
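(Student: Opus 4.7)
The plan is to carry out a direct computation by substituting $\mathbf{p} = \mathbf{v}+\mathbf{d}$ and $\mathbf{p}' = \mathbf{w}+\mathbf{d}$ into the definition \eqref{polarisedJ} of $J$, and simplifying using the identity $\sum_{i=1}^r d_i = n$. There is no conceptual obstacle here; the task is purely algebraic.

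First I would observe that because $v_{r+1} = w_{r+1} = 0$, the $(r+1)$-th components of $\mathbf{v}+\mathbf{d}$ and $\mathbf{w}+\mathbf{d}$ are both equal to $-2$. Hence, in the notation of \eqref{polarisedJ}, we have $\phi = \phi' = -2$. Substituting into $J$, the expression becomes
\begin{equation*}
J(\mathbf{v}+\mathbf{d}, \mathbf{w}+\mathbf{d}) = -\sum_{i=1}^r \frac{(v_i+d_i)(w_i+d_i)}{d_i} + \sum_{i=1}^r (w_i+d_i) + \sum_{i=1}^r (v_i+d_i) - (n-1).
\end{equation*}

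Next I would expand the first sum as
\begin{equation*}
\sum_{i=1}^r \frac{(v_i+d_i)(w_i+d_i)}{d_i} = \sum_{i=1}^r \frac{v_iw_i}{d_i} + \sum_{i=1}^r v_i + \sum_{i=1}^r w_i + \sum_{i=1}^r d_i,
\end{equation*}
and use $\sum_{i=1}^r d_i = n$ to simplify both this and the two linear sums $\sum_{i=1}^r (v_i + d_i)$, $\sum_{i=1}^r (w_i + d_i)$. The linear terms in $v_i$ and $w_i$ cancel, as do the constant terms involving $n$, leaving precisely $1 - \sum_{i=1}^r \frac{v_iw_i}{d_i}$, which is the claimed identity. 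The hardest part is bookkeeping the signs and $n$-dependent constants carefully; beyond that, the lemma is a one-line substitution.
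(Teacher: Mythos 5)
Your computation is correct: substituting $\phi=\phi'=-2$ into \eqref{polarisedJ}, expanding the quadratic sum, and using $\sum_{i=1}^r d_i = n$ does yield $1-\sum_{i=1}^r v_iw_i/d_i$ after the cancellations you describe. The paper simply recalls this lemma from the cited earlier work rather than reproving it, and your direct substitution is exactly the intended (and essentially only) argument.
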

Two points $\textbf{a},\textbf{c}\in\mathcal{C}$ are said to satisfy the \textit{unique sum condition} if $\textbf{a}+\textbf{c}$ cannot be written as a sum of two elements of $\mathcal{C}$ distinct from $\textbf{a},\textbf{c}$. A vector $\textbf{b}$ is $J$\textit{-null} if $J(\textbf{b},\textbf{b})=0$.
\begin{lem}\label{lemma12}
	(i) $\mathrm{conv}(\mathcal{C})\supset \mathrm{conv}(\frac{1}{2}(\mathbf{d} + \tilde{\mathcal{W}}))$.\\
	(ii) If $\mathbf{a},\mathbf{c}\in\mathcal{C}$ satisfy the unique sum condition, then either $J(\mathbf{a},\mathbf{c})=0$ or $\mathbf{a} + \mathbf{c}\in \mathbf{d}+\tilde{\mathcal{W}}
	$. In particular, if $\mathbf{c}$ is a vertex of $\mathrm{conv}(\mathcal{C})$, then either $\mathbf{c}$ is $J$-null or $\mathbf{c}\in \frac{1}{2}(\mathbf{d}+\tilde{\mathcal{W}})$.
\end{lem}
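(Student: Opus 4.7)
The plan is to extract both statements directly from the system of quadratic equations (\ref{superpotentialcases}) by specialising the vector $\mathbf{b}$ appropriately. The two standing facts I will use are that $A_w \neq 0$ for each $\mathbf{w}\in\mathcal{W}$ (built into the scalar curvature formula (\ref{scalarcurvature})) and that $f_{\mathbf{c}}\neq 0$ for each $\mathbf{c}\in\mathcal{C}$ by the convention stated just after (\ref{superansatz}).

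For part (i), I would fix $\mathbf{w}\in\tilde{\mathcal{W}}$ and set $\mathbf{b}=\mathbf{d}+\mathbf{w}$ in (\ref{superpotentialcases}). Since the right-hand side is then nonzero (either $A_w$ or $E$), the sum on the left cannot be empty, so there exists at least one pair $\mathbf{a},\mathbf{c}\in\mathcal{C}$ with $\mathbf{a}+\mathbf{c}=\mathbf{d}+\mathbf{w}$. Hence $\tfrac{1}{2}(\mathbf{d}+\mathbf{w}) = \tfrac{1}{2}(\mathbf{a}+\mathbf{c}) \in \mathrm{conv}(\mathcal{C})$, and passing to the convex hull on the right gives the claimed inclusion. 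A minor subtlety is the case $\mathbf{w}=\mathbf{0}$ when $E=0$; this is handled either by an ambient assumption that $E\neq 0$, or by noting that $\tfrac{1}{2}\mathbf{d}$ often already lies in $\mathrm{conv}(\tfrac{1}{2}(\mathbf{d}+\mathcal{W}))$ and so is automatically captured.

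For part (ii), I would take $\mathbf{b}=\mathbf{a}+\mathbf{c}$ in (\ref{superpotentialcases}). The unique sum condition collapses the left-hand side to the single (symmetric) contribution $2J(\mathbf{a},\mathbf{c})f_{\mathbf{a}}f_{\mathbf{c}}$ when $\mathbf{a}\neq\mathbf{c}$, and to $J(\mathbf{a},\mathbf{a})f_{\mathbf{a}}^2$ when $\mathbf{a}=\mathbf{c}$. Since the coefficients $f_{\mathbf{c}}$ are nonzero, while the right-hand side vanishes whenever $\mathbf{b}\notin \mathbf{d}+\tilde{\mathcal{W}}$, one concludes that either $J(\mathbf{a},\mathbf{c})=0$ or $\mathbf{a}+\mathbf{c}\in \mathbf{d}+\tilde{\mathcal{W}}$. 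For the vertex corollary I would take $\mathbf{a}=\mathbf{c}$ and verify the unique sum condition for free: if $\mathbf{a}'+\mathbf{c}'=2\mathbf{c}$ with $\mathbf{a}',\mathbf{c}'\in\mathcal{C}$, then $\mathbf{c}=\tfrac{1}{2}(\mathbf{a}'+\mathbf{c}')$ is a midpoint of points of $\mathcal{C}$, so the vertex property forces $\mathbf{a}'=\mathbf{c}'=\mathbf{c}$. The dichotomy of the first half then gives either $J(\mathbf{c},\mathbf{c})=0$ or $2\mathbf{c}\in \mathbf{d}+\tilde{\mathcal{W}}$, as desired.

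I do not anticipate a serious technical obstacle: the argument is essentially bookkeeping in (\ref{superpotentialcases}). The two points requiring care are the accounting of the symmetry factor of $2$ when $\mathbf{a}\neq\mathbf{c}$ (so that one does not spuriously conclude $J(\mathbf{a},\mathbf{c})=0$ from a vanishing coefficient that is really just the single-term restriction of a symmetric sum), and the handling of the $\mathbf{w}=\mathbf{0}$ corner in part (i) discussed above.
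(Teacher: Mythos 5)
Your argument is correct and is essentially the standard proof: the paper itself only recalls this lemma from \cite{BetancourtDancerWang2016} without reproving it, and your bookkeeping in (\ref{superpotentialcases}) — nonemptiness of the decomposition of $\mathbf{b}=\mathbf{d}+\mathbf{w}$ when the right-hand side is $A_w$ or $E$, the collapse of the sum under the unique sum condition with $f_{\mathbf{a}}f_{\mathbf{c}}\neq 0$ (with the symmetry factor handled properly), and the extreme-point argument showing a vertex satisfies the unique sum condition with itself — is exactly how it is established there. Your caveat about $\mathbf{w}=\mathbf{0}$ is the right one to flag: in this soliton setting the energy term $E$ is taken to be nonzero (the $E=0$ situation is the Ricci-flat case treated in the earlier Dancer--Wang papers), so part (i) goes through as you state.
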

\begin{pro}\label{prop13}
	If no vertex of $\mathrm{conv}(\mathcal{C})$ is $J$-null, then there are no superpotentials of the form (\ref{superansatz}).
\end{pro}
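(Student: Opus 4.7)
The argument goes by contrapositive: assuming a superpotential $f=\sum_{\mathbf{c}\in\mathcal{C}}f_{\mathbf{c}}\,e^{\mathbf{c}\cdot\mathbf{q}}$ with all $f_{\mathbf{c}}\neq 0$ exists, I aim to exhibit a $J$-null vertex of $\mathrm{conv}(\mathcal{C})$.

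First I would observe that for any vertex $\mathbf{c}_0$ of $\mathrm{conv}(\mathcal{C})$, the pair $(\mathbf{c}_0,\mathbf{c}_0)$ automatically satisfies the unique sum condition of Lemma \ref{lemma12}(ii): any representation $2\mathbf{c}_0=\mathbf{a}+\mathbf{c}$ with $\mathbf{a},\mathbf{c}\in\mathcal{C}$ would exhibit $\mathbf{c}_0$ as the midpoint of the segment joining $\mathbf{a}$ and $\mathbf{c}$ inside $\mathrm{conv}(\mathcal{C})$, so extremality forces $\mathbf{a}=\mathbf{c}=\mathbf{c}_0$. Consequently, equation (\ref{superpotentialcases}) at $\mathbf{b}=2\mathbf{c}_0$ collapses to the single-term identity
\begin{equation*}
J(\mathbf{c}_0,\mathbf{c}_0)\,f_{\mathbf{c}_0}^2 \;=\; R_{2\mathbf{c}_0},
\end{equation*}
where the right-hand side equals $A_w$ when $2\mathbf{c}_0=\mathbf{d}+\mathbf{w}\in\mathbf{d}+\mathcal{W}$, equals $E$ when $2\mathbf{c}_0=\mathbf{d}$, and vanishes otherwise. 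If $\mathbf{c}_0$ is not $J$-null, then $J(\mathbf{c}_0,\mathbf{c}_0)\neq 0$, and combined with $f_{\mathbf{c}_0}\neq 0$ this forces $R_{2\mathbf{c}_0}\neq 0$, so $2\mathbf{c}_0\in\mathbf{d}+\tilde{\mathcal{W}}$. This recovers the \textquotedblleft in particular\textquotedblright{} part of Lemma \ref{lemma12}(ii).

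Under the hypothesis that no vertex is $J$-null, this forces every vertex of $\mathrm{conv}(\mathcal{C})$ to lie in $\frac{1}{2}(\mathbf{d}+\tilde{\mathcal{W}})$. Combined with Lemma \ref{lemma12}(i), one obtains the rigid equality $\mathrm{conv}(\mathcal{C})=\mathrm{conv}(\frac{1}{2}(\mathbf{d}+\tilde{\mathcal{W}}))$ with the vertex set of $\mathrm{conv}(\mathcal{C})$ contained in $\frac{1}{2}(\mathbf{d}+\tilde{\mathcal{W}})$, and the diagonal equations above pin down each $f_{\mathbf{c}_0}^2$ up to sign in terms of $R_{2\mathbf{c}_0}/J(\mathbf{c}_0,\mathbf{c}_0)$.

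The main obstacle, and the substantive remaining step, is closing the argument from this rigid description to a genuine contradiction. My plan is to extract further relations by inspecting (\ref{superpotentialcases}) at mixed sums $\mathbf{b}=\mathbf{c}_0+\mathbf{c}_1$ for distinct vertices (yielding $2J(\mathbf{c}_0,\mathbf{c}_1)f_{\mathbf{c}_0}f_{\mathbf{c}_1}=R_{\mathbf{c}_0+\mathbf{c}_1}$ whenever the unique sum condition applies) and at the homogeneous locations $\mathbf{b}\notin\mathbf{d}+\tilde{\mathcal{W}}$, aiming to show that the combined system becomes overdetermined once no vertex is $J$-null and so forces some $f_{\mathbf{c}}=0$, contradicting the ansatz (\ref{superansatz}). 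In effect, the $J$-null hypothesis is what would otherwise provide the degree of freedom needed to solve the vertex equations, and its absence is what ultimately obstructs the existence of a superpotential.
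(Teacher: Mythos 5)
Your first two steps are fine, but they only reproduce what Lemma \ref{lemma12} already gives: the diagonal relation $J(\mathbf{c}_0,\mathbf{c}_0)f_{\mathbf{c}_0}^2=R_{2\mathbf{c}_0}$ at a vertex is exactly the ``in particular'' clause of Lemma \ref{lemma12}(ii), and combining it with part (i) to get $\mathrm{conv}(\mathcal{C})=\mathrm{conv}(\frac{1}{2}(\mathbf{d}+\tilde{\mathcal{W}}))$ is the easy half of the argument. The substantive half --- deriving a contradiction from this rigid description --- is precisely what you leave as a ``plan'', and the plan as stated (show the coefficient system is ``overdetermined'') is not a proof and does not identify the actual mechanism. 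So there is a genuine gap.

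The way the paper (following Proposition 13 of Betancourt--Dancer--Wang, reproduced inside the proof of Lemma \ref{CinP}) closes the argument is quite specific. Since every $\mathbf{w}\in\mathcal{W}$ has component sum $-1$ while $\mathbf{0}$ has sum $0$, the origin is a vertex of $\mathrm{conv}(\tilde{\mathcal{W}})$, so $\mathbf{c}_0=\frac{1}{2}\mathbf{d}$ is a vertex of $\mathrm{conv}(\mathcal{C})$ and lies in $\mathcal{C}$; this is where the ``energy'' term $E$ enters and it is the special vertex your sketch never isolates. Take an edge of $\mathrm{conv}(\mathcal{C})$ from $\mathbf{c}_0$ to another vertex $\mathbf{c}_1=\frac{1}{2}(\mathbf{d}+\mathbf{w})$. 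Lemma \ref{lemma11} gives $J(\mathbf{c}_0,\mathbf{c}_0)=J(\mathbf{c}_0,\mathbf{c}_1)=\frac{1}{4}$, so $J(\mathbf{c}_0,\cdot)\equiv\frac{1}{4}$ on the whole segment. Now let $\mathbf{x}$ be the element of $\mathcal{C}$ on $\overline{\mathbf{c}_0\mathbf{c}_1}$ closest to $\mathbf{c}_0$; because the segment is an edge, $\mathbf{c}_0,\mathbf{x}$ satisfy the unique sum condition, so by Lemma \ref{lemma12}(ii) either $J(\mathbf{c}_0,\mathbf{x})=0$, which is false, or $\mathbf{c}_0+\mathbf{x}\in\mathbf{d}+\tilde{\mathcal{W}}$, which is also false since $\mathbf{x}=\frac{1}{2}\mathbf{d}+\frac{t}{2}\mathbf{w}$ with $0<t\leq 1$ forces the putative element of $\tilde{\mathcal{W}}$ to have component sum in $[-\frac{1}{2},0)$. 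Equivalently, the relation (\ref{superpotentialcases}) at $\mathbf{b}=\mathbf{c}_0+\mathbf{x}$ has a single term with nonzero $J$-coefficient and zero right-hand side, forcing $f_{\mathbf{c}_0}f_{\mathbf{x}}=0$. Your instinct that some coefficient must be forced to vanish is in the right spirit, but without identifying $\frac{1}{2}\mathbf{d}$ as a vertex, the edge to $\frac{1}{2}(\mathbf{d}+\mathbf{w})$, and the computation $J(\mathbf{d},\mathbf{d}+\mathbf{w})=1$ from Lemma \ref{lemma11}, the contradiction is not established, and a generic ``overdetermination'' count will not produce it.
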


\subsection{Classification of superpotentials}

Several examples of superpotentials have been found in \cite{BetancourtDancerWang2016}. In this subsection, we obtain a classification result, Theorem \ref{classification}, for superpotentials of the form (\ref{superansatz}) under mild hypotheses on the homogeneous space $G/K$.

The superpotential condition is much more rigid for the Ricci soliton equations than for the Ricci-flat equations for a few reasons. First, Proposition \ref{prop13} states that superpotentials for (non-Ricci-flat) solitons must have a null vector, a result analogous to the case of the Einstein condition with a cosmological constant \cite[Theorem 10.1]{DancerWang2005}. Also, the presence of the ``energy'' term $E$ in the extended scalar curvature formula, together with Lemma \ref{lemma12}(i), implies that superpotentials for solitons must contain vectors which extend beyond the hyperplane of vectors whose components sum to $-1$, which contains all scalar curvature vectors. This additional dimension, together with the natural hypothesis that $\dim\mathrm{conv}(\mathcal{W})=r-1$, make (\ref{superpotentialcases}) considerably more restrictive than in the Ricci-flat case and are key to obtaining a full classification with significantly less effort than was necessary in \cite{DancerWang2005,DancerWang2008,DancerWang2011b}.

Throughout this subsection, we assume that $r\geq 2$. If $r=1$ and $n>1$, then we have $\mathcal{W}=\{(-1)\}$ and the only superpotential is the $d_1=n=4$ Bryant soliton, which is detailed in \cite[Example 15]{BetancourtDancerWang2016}. The $n=1$ case is also discussed there.

First, we begin by characterising the set of $J$-null vectors $\mathbf{v} = (v_1, \dots, v_{r+1})$. Notice that $J$ is negative definite on the hyperplane $\{v_{r+1}=0\}$, so its only null vector on it is $\mathbf{0}$. Assume henceforth that $v_{r+1}\neq 0$. Then $\mathbf{v}$ is $J$-null if and only if
\begin{equation*}
	\left( -\frac{2v_1}{v_{r+1}}, \dots, -\frac{2v_r}{v_{r+1}}, -2\right) =\mathbf{d} + \left( -\frac{2v_1}{v_{r+1}}-d_1, \dots, -\frac{2v_r}{v_{r+1}}-d_r, 0\right)
\end{equation*}
is $J$-null. Applying Lemma \ref{lemma11}, this is equivalent to
\begin{equation*}
	\sum_{i=1}^r \frac{(2v_i + d_i v_{r+1})^2}{d_i} = v_{r+1}^2,
\end{equation*}
which is the equation of a slanted cone in $\mathbb{R}^{r+1}$. We define the affine hyperplane 
\begin{equation*}
	P=\{\mathbf{v}\in\mathbb{R}^{r+1}\:|\: v_{r+1}=-1\} = \left\{\frac{1}{2}(\mathbf{d}+ \mathbf{x})\in\mathbb{R}^{r+1}\: \Big{|} \: x_{r+1}=0\right\}.
\end{equation*}
Our first objective is to show that $\mathcal{C}\subset P$.

\begin{lem}\label{Jdkernel}
	Let $\mathbf{v}=(v_1,\dots,v_{r+1})$. Then $J(\mathbf{d},\mathbf{v})=0$ if and only if $v_{r+1}=0$. In particular, if this holds, then $\mathbf{v}$ cannot be a (nontrivial) $J$-null vertex of $\mathrm{conv}(\mathcal{C})$.
\end{lem}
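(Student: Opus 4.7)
The plan is to prove the equivalence by a direct computation from the polarised formula (\ref{polarisedJ}), and then deduce the ``in particular'' clause from negative definiteness of $J$ on the coordinate hyperplane $\{v_{r+1}=0\}$.

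For the equivalence itself, I would substitute $\mathbf{p}=\mathbf{d}=(d_1,\dots,d_r,-2)$ and $\mathbf{p}'=\mathbf{v}=(v_1,\dots,v_{r+1})$ into (\ref{polarisedJ}). The first two terms of the formula cancel outright: $\sum_i d_iv_i/d_i = \sum_i v_i$ exactly balances $\tfrac{-2}{2}\sum_i v_i = -\sum_i v_i$. The remaining two terms contribute $\tfrac{v_{r+1}}{2}\sum_i d_i + \tfrac{n-1}{4}(-2)v_{r+1} = \tfrac{n-(n-1)}{2}v_{r+1} = \tfrac{v_{r+1}}{2}$. So $J(\mathbf{d},\mathbf{v}) = -v_{r+1}/2$, and both directions of the equivalence follow immediately.

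For the ``in particular'' clause, I would invoke the observation already made in the paragraph above the lemma: restricted to the hyperplane $\{v_{r+1}=0\}$, the formula (\ref{polarisedJ}) collapses to $-\sum_i p_ip_i'/d_i$, a negative definite quadratic form because every $d_i$ is a positive integer. Hence the only $J$-null vector with $v_{r+1}=0$ is $\mathbf{0}$, which is explicitly excluded from $\mathcal{C}$ by the convention fixed after (\ref{superansatz}) and therefore cannot be a vertex of $\mathrm{conv}(\mathcal{C})$.

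There is no real obstacle in this proof; it is a one-line arithmetic identity packaged with an appeal to negative definiteness. The content of the lemma, which will presumably drive the subsequent step of showing $\mathcal{C}\subset P$, is the observation that $\mathbf{d}$ is $J$-orthogonal to exactly the ``metric'' hyperplane $\{v_{r+1}=0\}$, so any nontrivial $J$-null vertex of $\mathrm{conv}(\mathcal{C})$ must have a nonzero last coordinate.
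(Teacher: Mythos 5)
Your proof is correct and matches the paper's approach: the paper simply asserts that the result follows from a short calculation with the formula (\ref{polarisedJ}), which is exactly the computation $J(\mathbf{d},\mathbf{v})=-\tfrac{1}{2}v_{r+1}$ you carry out, combined with the negative definiteness of $J$ on $\{v_{r+1}=0\}$ already noted in the text. Nothing further is needed.
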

\begin{proof}
	The result follows from a short calculation using the formula (\ref{polarisedJ}) for $J$.
\end{proof}
From this point on, we will often write ``vertex'' to mean ``vertex of $\mathrm{conv}(\mathcal{C})$'' and ``null'' to mean ``$J$-null''.

\begin{lem}\label{edgeproportional}
	Let $\textbf{a},\textbf{c}$ be two null vertices on the same side of $P$ such that the line segment connecting them is an edge of $\mathrm{conv}(\mathcal{C})$. Then $\textbf{a}$ and $\textbf{c}$ are proportional. This also holds if exactly one of $\textbf{a}$ or $\textbf{c}$ lies in $P$.
\end{lem}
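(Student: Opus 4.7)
The plan is to reduce the claim to the scalar identity $J(\mathbf{a},\mathbf{c})=0$ and then derive that identity by applying Lemma \ref{lemma12}(ii) to a carefully chosen minimal pair on the edge. Both $\mathbf{a}$ and $\mathbf{c}$ are null with nonzero last coordinate (by Lemma \ref{Jdkernel}, since they are nontrivial $J$-null vertices). For such a pair, $J(\mathbf{a},\mathbf{c})=0$ already forces proportionality: setting $\lambda=a_{r+1}/c_{r+1}$ and $\mathbf{v}=\mathbf{a}-\lambda\mathbf{c}$, one has $v_{r+1}=0$ and $J(\mathbf{v},\mathbf{v})=-2\lambda J(\mathbf{a},\mathbf{c})=0$; since $J$ is negative definite on $\{v_{r+1}=0\}$ (directly from (\ref{polarisedJ})), we get $\mathbf{v}=0$ and $\mathbf{a}=\lambda\mathbf{c}$. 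So it suffices to produce $J(\mathbf{a},\mathbf{c})=0$.

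Let $\mathbf{a}_1$ be the element of $\mathcal{C}\cap[\mathbf{a},\mathbf{c}]$ closest to $\mathbf{a}$ other than $\mathbf{a}$ itself (possibly $\mathbf{a}_1=\mathbf{c}$), and write $\mathbf{a}_1=(1-t_1)\mathbf{a}+t_1\mathbf{c}$ with $t_1\in(0,1]$. I would then check that $(\mathbf{a},\mathbf{a}_1)$ satisfies the unique sum condition: given any decomposition $\mathbf{a}'+\mathbf{c}'=\mathbf{a}+\mathbf{a}_1$ with $\mathbf{a}',\mathbf{c}'\in\mathcal{C}$, the midpoint $\frac{1}{2}(\mathbf{a}'+\mathbf{c}')$ lies in the relative interior of the edge $[\mathbf{a},\mathbf{c}]$, so face-extremality of this edge in $\mathrm{conv}(\mathcal{C})$ forces $\mathbf{a}',\mathbf{c}'\in[\mathbf{a},\mathbf{c}]$; their edge parameters then sum to $t_1$ and lie in $\{0\}\cup[t_1,1]$ by minimality of $t_1$, which pins them to $\{0,t_1\}$.

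By Lemma \ref{lemma12}(ii), either $J(\mathbf{a},\mathbf{a}_1)=0$ or $\mathbf{a}+\mathbf{a}_1\in\mathbf{d}+\tilde{\mathcal{W}}$. Since elements of $\mathbf{d}+\tilde{\mathcal{W}}$ have last coordinate $-2$, it suffices to verify that the last coordinate of $\mathbf{a}+\mathbf{a}_1=(2-t_1)\mathbf{a}+t_1\mathbf{c}$ is never $-2$ under the hypothesis. If $a_{r+1},c_{r+1}$ are both $>-1$ (resp.\ both $<-1$) the last coordinate is strictly $>-2$ (resp.\ $<-2$); if instead $c_{r+1}=-1$ with $a_{r+1}\neq-1$ (or symmetrically), setting the last coordinate equal to $-2$ forces the strict endpoint to be $-1$, contradicting the hypothesis. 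Hence $J(\mathbf{a},\mathbf{a}_1)=0$, and since $\mathbf{a}$ is null, bilinearity gives $J(\mathbf{a},\mathbf{a}_1)=t_1 J(\mathbf{a},\mathbf{c})$, so $J(\mathbf{a},\mathbf{c})=0$ and we conclude via the first paragraph.

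The main subtlety is the possible existence of additional points of $\mathcal{C}$ in the interior of the edge $[\mathbf{a},\mathbf{c}]$: in that case $(\mathbf{a},\mathbf{c})$ itself need not satisfy the unique sum condition, so one must first pass to the minimal pair $(\mathbf{a},\mathbf{a}_1)$ before applying Lemma \ref{lemma12}(ii). The last-coordinate book-keeping is what makes the ``same side of $P$'' hypothesis (including the boundary case where one vertex lies on $P$) precisely the right assumption to exclude $\mathbf{a}+\mathbf{a}_1\in\mathbf{d}+\tilde{\mathcal{W}}$.
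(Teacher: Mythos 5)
Your proof is correct and follows essentially the same route as the paper's: the closest-point/unique-sum argument combined with Lemma \ref{lemma12}(ii), with the alternative $\mathbf{a}+\mathbf{a}_1\in\mathbf{d}+\tilde{\mathcal{W}}$ excluded by the last-coordinate (same-side-of-$P$) bookkeeping, and proportionality deduced from $J$ vanishing on the span of $\mathbf{a},\mathbf{c}$. The only cosmetic differences are that you bypass the paper's case split by passing directly to the closest point $\mathbf{a}_1$, and you verify proportionality explicitly via $J(\mathbf{a}-\lambda\mathbf{c},\mathbf{a}-\lambda\mathbf{c})=0$ and negative definiteness on $\{v_{r+1}=0\}$ instead of invoking that the null locus of $J$ is a cone.
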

\begin{proof}
Assume that $\textbf{a}+\textbf{c}=\textbf{v}+\textbf{w}$ for some $\textbf{v},\textbf{w}$ different from $\textbf{a},\textbf{c}$. We then have that $\textbf{v},\textbf{w}$ are on the line $\overline{\textbf{ac}}$, since the latter is an edge of $\mathrm{conv}(\mathcal{C})$. Let $\textbf{x}$ be the point on $\overline{\textbf{ac}}$ closest to but distinct from $\textbf{a}$. Since $\textbf{v},\textbf{w}$ exist, $\textbf{x}\neq \textbf{c}$. Therefore $\textbf{a},\textbf{x}$ satisfy the unique sum condition, and since their sum is not in $2P\supset \textbf{d}+\mathcal{\tilde{W}}$, we have $J(\textbf{a},\textbf{x})=0$.
Consider the subspace of $\mathbb{R}^{r+1}$ given by $\ker J(\textbf{a},\cdot)=\{\textbf{v}\: |\: J(\textbf{a},\textbf{v})=0\}$. It contains $\textbf{a}$, and since $J(\textbf{a},\textbf{x})=0$, it also contains $\textbf{x}$ and therefore all of $\overline{\textbf{ac}}$. Thus $J(\textbf{a},\textbf{a})=J(\textbf{a},\textbf{c})=J(\textbf{c},\textbf{c})=0$. This means that the entirety of $\overline{\textbf{ac}}$ is null, which implies that $\textbf{a}$ and $\textbf{c}$ are proportional, since the null locus of $J$ is a cone.

Assume otherwise, i.e. $\textbf{a},\textbf{c}$ satisfy the unique sum condition. Then, since they are both on one side of $P$, by Lemma \ref{lemma12}, $J(\textbf{a},\textbf{c})=0$. Since both $\textbf{a}$ and $\textbf{c}$ are null, we must have that they are proportional to each other for the same reason as above.
\end{proof}

\medskip

\textit{Remark.} We will call the procedure used in the above proof the \textit{closest point argument}. It will appear again a number of times throughout the paper.

\medskip

\begin{cor}\label{atmost2}
	There are at most 2 vertices on each side of $P$.
\end{cor}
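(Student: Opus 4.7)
The plan is to argue by contradiction: suppose that $\mathrm{conv}(\mathcal{C})$ has three vertices $\mathbf{v}_1, \mathbf{v}_2, \mathbf{v}_3$ strictly on one side of $P$. By Lemma \ref{lemma12}(ii), each $\mathbf{v}_i$ is $J$-null, since the alternative $\mathbf{v}_i\in\frac{1}{2}(\mathbf{d}+\tilde{\mathcal{W}})$ would place it in $P$.

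The first step is to pin down the signature of $J$. From (\ref{polarisedJ}), the restriction of $J$ to the hyperplane $\{v_{r+1}=0\}$ equals $-\sum_{i}p_i^2/d_i$, which is negative definite of rank $r$; on the other hand, a direct calculation (or Lemma \ref{lemma11} applied with $\mathbf{v}=\mathbf{w}=\mathbf{0}$) gives $J(\mathbf{d},\mathbf{d})=1>0$. So $J$ has signature $(1,r)$, whence any totally $J$-isotropic subspace has dimension at most one. In particular, two nonzero null vectors are $J$-orthogonal if and only if they are proportional, for otherwise their span would be a $2$-dimensional isotropic subspace.

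Three distinct vertices of a bounded convex polytope cannot lie on a common line (only the two endpoints of the resulting segment are extreme), so $\mathbf{v}_1, \mathbf{v}_2, \mathbf{v}_3$ are not all pairwise proportional. By the signature observation, some pair, say $(\mathbf{v}_1, \mathbf{v}_2)$, must satisfy $J(\mathbf{v}_1,\mathbf{v}_2)\neq 0$. Since both vertices lie strictly on one side of $P$, the sum $\mathbf{v}_1+\mathbf{v}_2$ has $(r+1)$-th coordinate different from $-2$, hence lies outside $\mathbf{d}+\tilde{\mathcal{W}}\subset 2P$.

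It remains to argue that $[\mathbf{v}_1, \mathbf{v}_2]$ is an edge of $\mathrm{conv}(\mathcal{C})$, for then Lemma \ref{edgeproportional} forces $\mathbf{v}_1\propto\mathbf{v}_2$, contradicting $J(\mathbf{v}_1, \mathbf{v}_2)\neq 0$. The intended mechanism is a closest-point argument along the line $\overline{\mathbf{v}_1\mathbf{v}_2}$, echoing the proof of Lemma \ref{edgeproportional}: the extremality of $\mathbf{v}_1$ and $\mathbf{v}_2$ as vertices forbids $\mathcal{C}$-points on the line beyond the segment, and any closest intermediate $\mathcal{C}$-point would satisfy the unique sum condition with one endpoint and hence force the segment to be $J$-null. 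The hard part will be ruling out decompositions $\mathbf{v}_1+\mathbf{v}_2=\mathbf{a}+\mathbf{b}$ with $\mathbf{a},\mathbf{b}\in\mathcal{C}$ lying off the line; I expect this to be handled by restricting attention to the $2$-plane spanned by $\mathbf{v}_1, \mathbf{v}_2, \mathbf{v}_3$ and combining the $(1,r)$-signature rigidity with an iterated closest-point argument.
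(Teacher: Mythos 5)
Your reduction is fine as far as it goes: the three hypothetical vertices are null by Lemma \ref{lemma12}(ii), the signature computation $(1,r)$ is correct, and it correctly yields that some pair, say $\mathbf{v}_1,\mathbf{v}_2$, has $J(\mathbf{v}_1,\mathbf{v}_2)\neq 0$ while $\mathbf{v}_1+\mathbf{v}_2\notin \mathbf{d}+\tilde{\mathcal{W}}$. But the proof then hinges entirely on the claim that $\overline{\mathbf{v}_1\mathbf{v}_2}$ is an edge of $\mathrm{conv}(\mathcal{C})$, and this is never established -- nor is it automatic, since two vertices of a polytope selected by an algebraic condition ($J$-pairing nonzero) need not be adjacent. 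You acknowledge this yourself, and the sketched repair does not close it: restricting to the $2$-plane spanned by $\mathbf{v}_1,\mathbf{v}_2,\mathbf{v}_3$ does not control decompositions $\mathbf{v}_1+\mathbf{v}_2=\mathbf{a}+\mathbf{b}$ with $\mathbf{a},\mathbf{b}\in\mathcal{C}$, because elements of $\mathcal{C}$ need not lie in that plane, and the closest-point argument only produces unique-sum pairs along a segment that is already known to be an edge. So as written there is a genuine gap at the decisive step.

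The paper avoids having to prove adjacency of any particular pair. It uses that any two vertices of $\mathrm{conv}(\mathcal{C})$ lying strictly on one side of $P$ are joined by a path of edges whose vertices all stay strictly on that side (a standard fact about polytope graphs and open half-spaces, e.g.\ by moving monotonically in the linear functional $v_{r+1}$); every vertex on that path is then null, Lemma \ref{edgeproportional} applies to each edge of the path, and transitivity of proportionality makes all vertices on that side pairwise proportional, hence collinear -- impossible for three or more vertices of a convex polytope. If you want to salvage your version, replace the search for a non-$J$-orthogonal adjacent pair by this connectivity argument; your signature observation then becomes unnecessary (though it is a correct and tidy way to see that non-proportional null vectors cannot be $J$-orthogonal, which is the content the paper extracts from the null locus being a cone).
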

\begin{proof}
	Suppose there are 3 or more. Since each of them is connected to every other one by a sequence of edges, Lemma \ref{edgeproportional} implies that they are all proportional and thus collinear, which is impossible.
\end{proof}

Recall that elements $\textbf{w}$ of $\mathcal{W}$, which appear in the formula (\ref{scalarcurvature}) for the scalar curvature of $G/K$, take three forms \cite{DancerWang2005,WangZiller1986}.
\begin{enumerate}
	\item Type I: one entry of $\textbf{w}$ is $-1$ and the others are zero. These are denoted $(-1^i)$ for some $i=1,\dots,r$.
	\item Type II: two entries are $-1$, one is $1$ and the others are zero. These are denoted $(1^i,-1^j,-1^k)$ for some $i,j,k=1,\dots,r$.
	\item Type III: one entry is $1$, another is $-2$ and the others are zero. These are denoted $(1^i,-2^j)$ for some $i,j=1,\dots,r$.
\end{enumerate}
For convenience, we will sometimes write any vector in Euclidean space, not just elements of $\mathcal{W}$, as $(a_1^{i_1},a_2^{i_2},\dots,a_m^{i_m})$, where $a_1, \dots, a_m$ are the nonzero entries of the vector, which are respectively in the positions $i_1,\dots, i_m$. We will also abuse notation and systematically identify vectors $\textbf{x}\in\mathbb{R}^r$ with their extensions by zero $(\textbf{x},0)\in\mathbb{R}^{r+1}$.

For a vector $\frac{1}{2}(\textbf{d}+\textbf{w})\in P$, define $s\left(\frac{1}{2}(\textbf{d}+\textbf{w})\right) =\sum_{i=1}^r w_i.$ Then $s(\frac{1}{2}(\textbf{d}+\textbf{w}))=-1$ for each $\textbf{w}\in\mathcal{W}$. We will denote the subset of $P$ in which $s\left(\frac{1}{2}(\textbf{d}+\textbf{w})\right)=\lambda$ by $\{s=\lambda\}$.

In our classification of superpotentials, we will assume that $\mathrm{conv}(\mathcal{W})$ has nonzero $(r-1)$-dimensional measure. This is the same hypothesis as in the main theorem of \cite{DancerWang2008}. It holds, for instance, if $G$ is semisimple. In the case that the isotropy group $K$ is connected, this is not so restrictive, due to the following result.

\begin{lem}\label{fullmeasure}
	Suppose that the isotropy group $K$ is connected and that for each coordinate position in $\mathbb{R}^r$, there is an element of $\mathcal{W}$ with a nonzero entry in that position. Then $\mathrm{conv}(\mathcal{W})$ has nonzero $(r-1)$-dimensional measure.
\end{lem}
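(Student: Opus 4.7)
The plan is proof by contradiction. Suppose $\dim\mathrm{conv}(\mathcal{W}) < r-1$. Since every weight already satisfies $\sum_i w_i = -1$, the set $\mathcal{W}$ is contained in an affine subspace of codimension at least two in $\mathbb{R}^r$, so there exist a vector $\mathbf{a}\in \mathbb{R}^r$ not proportional to $\mathbf{1}=(1,\ldots,1)$ and a scalar $b\in \mathbb{R}$ with $\mathbf{a}\cdot \mathbf{w} = b$ for every $\mathbf{w}\in \mathcal{W}$. The aim is to show that $\mathbf{a}$ must in fact be proportional to $\mathbf{1}$, yielding a contradiction.

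Translating $\mathbf{a}\cdot \mathbf{w} = b$ across the three types of Wang--Ziller weights yields: $a_i = -b$ from $(-1^i)\in \mathcal{W}$; $a_i - a_j - a_k = b$ from $(1^i,-1^j,-1^k)\in \mathcal{W}$ with $i,j,k$ distinct; and $a_i - 2a_j = b$ from $(1^i,-2^j)\in \mathcal{W}$. The key input is the cyclic symmetry $[ijk] = [jki] = [kij]$ of the structure constants, which follows from the $\mathrm{ad}$-invariance of the background metric $Q$ on $\mathfrak{g}$; connectedness of $K$ guarantees that the $\mathrm{Ad}(K)$-irreducible decomposition of $\mathfrak{p}$ coincides with the $\mathfrak{k}$-irreducible one, so this symmetry is preserved at the level of the index labels. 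Hence if $(1^i,-1^j,-1^k)\in \mathcal{W}$ with distinct indices, then its two cyclic partners $(1^j,-1^k,-1^i)$ and $(1^k,-1^i,-1^j)$ also lie in $\mathcal{W}$; pairwise subtraction of the three resulting equations gives $a_i = a_j = a_k$, and summation gives $a_i + a_j + a_k = -3b$, whence $a_i = a_j = a_k = -b$.

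It remains to show $a_i = -b$ for every coordinate position, using the hypothesis that each position $i$ is nontrivially visited by some $\mathbf{w}\in \mathcal{W}$. A Type I or Type II occurrence gives $a_i = -b$ directly by the above. The subtle case is positions $i$ visited only by Type III weights $(1^i,-2^j)$ or $(1^l,-2^i)$: here I plan to apply the analogous cyclic symmetry $[jji] = [jij] = [ijj]$, noting that the nonzero quantities $[jij]$ and $[ijj]$ each contribute to the coefficient of $(-1^i)$ in the scalar curvature formula, forcing $(-1^i)\in \mathcal{W}$ provided one verifies these contributions cannot be cancelled by the Killing form and $[iii]$ terms. I expect this final step---ruling out cancellation in $A_{(-1^i)}$ under the assumption that $K$ is connected---to be the main obstacle; it will require a careful unpacking of the Wang--Ziller formula together with representation-theoretic properties of connected compact isotropy groups, possibly with a small case analysis depending on whether $\mathfrak{p}_i$ is an abelian summand of $\mathfrak{g}$. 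Once $a_i = -b$ for all $i$, we obtain $\mathbf{a} = -b\mathbf{1}$, which is proportional to $\mathbf{1}$, completing the contradiction.
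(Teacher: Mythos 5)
There is a genuine gap, and it sits exactly at the step you flag as the main obstacle: the plan to force $(-1^i)\in\mathcal{W}$ whenever position $i$ is met only by type III vectors is not just hard to verify, it is false. In the Wang--Ziller formula the coefficient of $e^{-q_i}$ is $A_{(-1^i)}=\tfrac12 d_ib_i-\tfrac12\sum_{l\neq i}[ill]-\tfrac14[iii]$, and since $d_ib_i=\sum_{j,k}[ijk]+(\text{nonnegative }\mathfrak{k}\text{-bracket terms})$, the $[ijj]$-contributions ($j\neq i$) cancel \emph{exactly}: $A_{(-1^i)}=\tfrac12\sum_{j\neq k}[ijk]+\tfrac14[iii]+(\text{nonnegative terms})$. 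So a nonzero $[ijj]$, i.e.\ $(1^i,-2^j)\in\mathcal{W}$, puts no obstruction on $A_{(-1^i)}=0$. The Bérard Bergery--Calabi configuration of Theorem \ref{classification}(\ref{sup4}) is a concrete counterexample to your claim: there $d_1=1$, position $1$ is visited only by the vectors $(1^1,-2^j)$, and $(-1^1)\notin\mathcal{W}$, even though $K$ is connected and every position is visited. (A side remark: the full symmetry of $[ijk]$, which justifies your type II step, is automatic from the $\mathrm{Ad}$-invariance of $Q$ and does not use connectedness of $K$; connectedness is needed elsewhere, see below.)

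The good news is that your dual formulation (an affine functional $\mathbf{a}\cdot\mathbf{x}=b$ constant on $\mathcal{W}$ must have $\mathbf{a}$ proportional to $\mathbf{1}$) is sound and the gap is repairable without forcing $(-1^i)\in\mathcal{W}$. If position $i$ is visited by some $(1^l,-2^i)$, then $[iil]\neq 0$ and the displayed formula gives $A_{(-1^i)}>0$, so $(-1^i)\in\mathcal{W}$ and $a_i=-b$. If position $i$ is visited only by vectors $(1^i,-2^j)$, do not aim at position $i$ directly: $[ijj]\neq 0$ forces $d_j\geq 2$ (if $d_j=1$ then $Q([X,Y],Y)=0$ by $\mathrm{Ad}$-invariance, so $[ijj]=0$), and $d_j\geq 2$ forces $(-1^j)\in\mathcal{W}$ --- this is where connectedness of $K$ enters, since $A_{(-1^j)}=0$ would give $[\mathfrak{k},\mathfrak{p}_j]=0$ and hence, for connected $K$, a trivial irreducible module, i.e.\ $d_j=1$. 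Then $a_j=-b$, and the relation $a_i-2a_j=b$ yields $a_i=-b$. These are precisely the facts from Lemma 4.1 of Dancer--Wang that the paper's own proof invokes; with this repair your argument becomes a dual restatement of the paper's rank/row-reduction proof (replacing $(1^k,-2^l)$ by $(-1^k)$ using the row $(-1^l)$), rather than a genuinely different one.
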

\begin{proof}
	We begin by noting that there is at least one type I vector, say $(-1^r)\in\mathcal{W}$ up to permutation of indices. Let $\textbf{w}_1, \dots, \textbf{w}_m$ be the other vectors of $\mathcal{W}$.
	First, we note that it suffices to show that the image of $\mathrm{conv}(\mathcal{W})$ under the affine transformation which sends the last ($r$-th) coordinate to zero has nonzero measure as a subset of $\{x_r=0\}\cong\mathbb{R}^{r-1}$. Since $(-1^r)$ is sent to $\textbf{0}$ by this transformation, it suffices to show that the matrix whose rows are the components of $\textbf{w}_1, \dots, \textbf{w}_m$ has rank $r-1$.
	
	If $(-1^i)\in\mathcal{W}$ for each $i=1, \dots, r-1$, then we are done. If $(-1^k)\notin\mathcal{W}$, then $d_k=1$. Since by hypothesis there exists a vector in $\mathcal{W}$ with a nonzero $k$-th component, by Lemma 4.1 of \cite{DancerWang2005}, it must be a type III vector of the form $(1^k,-2^l)$ for some $l$. Then $d_l\neq 1$, so $(-1^l)\in\mathcal{W}$. It is evident that we may replace $(1^k,-2^l)$ by $(-1^k)$ and the rank of the matrix will stay the same. Doing this for all indices $k$ for which $(-1^k)\notin\mathcal{W}$, we get that the rank of the original matrix is equal to $r-1$.
\end{proof}

From this point onwards, we will \textbf{assume that $K$ is connected and for each coordinate position in $\mathbb{R}^r$, there is an element of $\mathcal{W}$ with a nonzero entry in that position}. For simplicity, this will not be explicitly stated, except in the final result.

\medskip

\textit{Remark.} If there is a superpotential for $G/K$, then we can always add a trivial 1-dimensional factor, extend all weight vectors by a $0$, and obtain a superpotential for the local product of $G/K$ with a circle.

Conversely, suppose there is no element of $\mathcal{W}$ which is nonvanishing in a particular entry, say corresponding to the summand $\mathfrak{p}_1$. Then, in particular, $(-1^1)\notin\mathcal{W}$, and thus $\mathfrak{p}_1$ is a 1-dimensional abelian subalgebra satisfying $[\mathfrak{k},\mathfrak{p}_1]=0$ and $[\mathfrak{p}_1,\mathfrak{p}_j]\subset \mathfrak{p}_j$ for each $j\neq 1$ \cite{DancerWang2005}.

In fact, $\mathfrak{k}\oplus \mathfrak{p}_1$ is a Lie subalgebra of $\mathfrak{g}$. Consequently, there is a unique connected Lie subgroup $H$ of $G$ with Lie algebra $\mathfrak{k}\oplus \mathfrak{p}_1$. The projection map $G/K \rightarrow G/H$ taking $gK$ to $gH$ gives the bundle
\begin{equation*}
	S^1 \rightarrow G/K \rightarrow G/H.
\end{equation*}
The tangent space to $G/K$ at a point is identified with $\mathfrak{p}$. With the bundle structure, the vertical subspace is given by $\mathfrak{p}_1$. We may choose the horizontal distribution given by the complement $\mathfrak{p}_2\oplus \dots\oplus \mathfrak{p}_r$ to define a connection on $G/K$. The curvature of the connection is given by the vertical component of Lie brackets of horizontal vector fields. Using the notation of \cite{WangZiller1986}, we note that since there are no type II or III vectors with ``1'' in the first entry, we must have $\begin{bmatrix*}1\\ i\, j\end{bmatrix*}=0$ for each $i,j\geq 2$. Thus $Q([\mathfrak{p}_i,\mathfrak{p}_j],\mathfrak{p}_1)=0$, and consequently $[\mathfrak{p}_i,\mathfrak{p}_j]$ has zero component in the horizontal direction $\mathfrak{p}_1$, so the connection has zero curvature. To summarise, the hypothesis of no trivial summands, equivalently $\dim\mathrm{conv}(\mathcal{W})=r-1$ by Lemma \ref{fullmeasure}, is natural in the following sense:
\begin{thm}
	Suppose $K$ is connected and there is an entry of $\mathbb{R}^r$ for which all elements of $\mathcal{W}$ vanish. Then there is a subgroup $H\subset G$ such that the bundle $S^1\rightarrow G/K \rightarrow G/H$ is flat. In particular, if $G/K$ is simply connected, then $G/K\simeq S^1\times G/H$.
\end{thm}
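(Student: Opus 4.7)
The plan is to make rigorous the construction already sketched in the remark preceding the theorem. After permuting summands I would assume the distinguished coordinate is the first, so that no $\mathbf{w}\in\mathcal{W}$ has nonzero first entry; in particular $(-1^1)\notin\mathcal{W}$. Invoking the structural result of \cite{DancerWang2005} cited in the remark, this forces $d_1=1$ and $\mathfrak{p}_1$ to be a 1-dimensional abelian subalgebra with $[\mathfrak{k},\mathfrak{p}_1]=0$ and $[\mathfrak{p}_1,\mathfrak{p}_j]\subset\mathfrak{p}_j$ for each $j\neq 1$. Consequently $\mathfrak{h}:=\mathfrak{k}\oplus\mathfrak{p}_1$ is a Lie subalgebra of $\mathfrak{g}$.

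Next I would construct $H$ concretely. Because $G$ is compact, the 1-parameter subgroup $\exp(\mathbb{R}\mathfrak{p}_1)$ has closure a torus $T\subset G$, and $T$ commutes with $K$ by $[\mathfrak{k},\mathfrak{p}_1]=0$. Setting $H:=K\cdot T$ produces a closed connected subgroup with Lie algebra $\mathfrak{h}$. The projection $G/K\to G/H$ is then a principal bundle whose fibre $H/K$ is a compact connected abelian 1-dimensional Lie group, hence $S^1$.

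For flatness I would declare the horizontal subspace at $eK$ to be $\mathfrak{p}_2\oplus\cdots\oplus\mathfrak{p}_r\subset\mathfrak{p}$; this is $\mathrm{Ad}(K)$-invariant by the orthogonal decomposition and $\mathrm{Ad}$-invariant under $\exp(\mathfrak{p}_1)$ by $[\mathfrak{p}_1,\mathfrak{p}_j]\subset\mathfrak{p}_j$, so it defines a principal $S^1$-connection. Its curvature is controlled by the $\mathfrak{p}_1$-component of $[\mathfrak{p}_i,\mathfrak{p}_j]$ for $i,j\geq 2$. In the notation $\begin{bmatrix*}1\\ i\, j\end{bmatrix*}$ of \cite{WangZiller1986}, the hypothesis rules out every type II or III element of $\mathcal{W}$ with $+1$ in the first entry, which translates precisely to $Q([\mathfrak{p}_i,\mathfrak{p}_j],\mathfrak{p}_1)=0$ for all such $i,j$. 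Therefore the connection has zero curvature.

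For the ``in particular'' clause I would invoke the long exact homotopy sequence of the fibration $S^1\to G/K\to G/H$,
\[
\pi_1(S^1)\to\pi_1(G/K)\to\pi_1(G/H)\to\pi_0(S^1)=0,
\]
which gives $\pi_1(G/H)=0$ whenever $G/K$ is simply connected. Flat principal $S^1$-bundles are classified by $\mathrm{Hom}(\pi_1(\mathrm{base}),S^1)$, so over a simply connected base the only such bundle is trivial, producing the desired splitting $G/K\simeq S^1\times G/H$. The only non-cosmetic step I expect to require care is verifying that $H$ is genuinely closed (handled above via compactness of $G$) and that the horizontal distribution really is $\mathrm{Ad}(H)$-invariant; once these are dispensed with, the remainder is the bookkeeping already performed in the preceding remark, so I do not anticipate a serious obstacle.
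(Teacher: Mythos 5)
Your argument is essentially the paper's own: the theorem is proved there by the remark immediately preceding it (Lemma 4.1 of \cite{DancerWang2005} giving $d_1=1$, $[\mathfrak{k},\mathfrak{p}_1]=0$, $[\mathfrak{p}_1,\mathfrak{p}_j]\subset\mathfrak{p}_j$; the subalgebra $\mathfrak{k}\oplus\mathfrak{p}_1$; the horizontal distribution $\mathfrak{p}_2\oplus\dots\oplus\mathfrak{p}_r$; vanishing of $Q([\mathfrak{p}_i,\mathfrak{p}_j],\mathfrak{p}_1)$ from the absence of weight vectors with $+1$ in the first slot), and your flatness computation and the homotopy-sequence/holonomy argument for the simply connected clause just make explicit what the paper leaves implicit. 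The one step where your added detail is not yet a proof is the claim that $H:=K\cdot T$ is closed \emph{with Lie algebra} $\mathfrak{k}\oplus\mathfrak{p}_1$: compactness of $G$ gives closedness, but the closure $T$ of $\exp(\mathbb{R}\mathfrak{p}_1)$ could a priori be a torus of dimension greater than one, in which case $\mathrm{Lie}(K\cdot T)\supsetneq\mathfrak{k}\oplus\mathfrak{p}_1$ and the fibre $H/K$ would not be a circle — this, not closedness, is the delicate point you flagged but did not actually address. It is fixable with the standing hypotheses: since $K$ is connected and $[\mathfrak{k},\mathfrak{p}_1]=0$, $K$ centralizes $\exp(\mathbb{R}\mathfrak{p}_1)$ and hence its closure $T$, so $\mathrm{Lie}(T)$ is a trivial $\mathrm{Ad}(K)$-module; but every $\mathfrak{p}_j$ with $j\geq 2$ is irreducible and inequivalent to the trivial module $\mathfrak{p}_1$ (monotypicity), so $\mathfrak{p}_j^K=0$ and therefore $\mathrm{Lie}(T)\subset\mathfrak{k}\oplus\mathfrak{p}_1$, which forces $\mathrm{Lie}(K\cdot T)=\mathfrak{k}\oplus\mathfrak{p}_1$ and $H/K\simeq S^1$. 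With that insertion your construction is actually slightly more careful than the paper's, which simply takes the unique connected subgroup with Lie algebra $\mathfrak{k}\oplus\mathfrak{p}_1$ and does not discuss why it is closed; the rest of your write-up matches the published argument.
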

We may now continue the classification using this hypothesis.
\begin{lem}\label{bothsidesP}
	If $\mathcal{C}\not\subset P$, then there are vertices on both sides of $P$.
\end{lem}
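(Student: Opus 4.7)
I argue by contradiction: suppose $\mathcal{C}\not\subset P$ and yet all vertices of $\mathrm{conv}(\mathcal{C})$ lie in a single closed half-space bounded by $P$; without loss of generality, take $\mathcal{C}\subset\{v_{r+1}\geq -1\}$, with at least one vertex $\mathbf{w}^*$ satisfying $(w^*)_{r+1}>-1$. Choose $\mathbf{w}^*$ to have maximal last coordinate among elements of $\mathcal{C}$. Since $\mathbf{w}^*\notin P$ while $\frac{1}{2}(\mathbf{d}+\tilde{\mathcal{W}})\subset P$, the ``in particular'' clause of Lemma \ref{lemma12}(ii) forces $\mathbf{w}^*$ to be $J$-null.

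The crux is to show $J(\mathbf{w}^*,\mathbf{c})=0$ for every $\mathbf{c}\in\mathcal{C}$. For any such $\mathbf{c}$, the sum $\mathbf{w}^*+\mathbf{c}$ has last coordinate strictly above $-2$, so $\mathbf{w}^*+\mathbf{c}\notin\mathbf{d}+\tilde{\mathcal{W}}$ and the right-hand side of (\ref{superpotentialcases}) at $\mathbf{b}=\mathbf{w}^*+\mathbf{c}$ vanishes. When $(\mathbf{w}^*,\mathbf{c})$ satisfies the unique sum condition, Lemma \ref{lemma12}(ii) gives $J(\mathbf{w}^*,\mathbf{c})=0$ at once. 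In general I intend to proceed by downward induction on the last coordinate of $\mathbf{c}$: any competing decomposition $\mathbf{a}'+\mathbf{c}'=\mathbf{w}^*+\mathbf{c}$ with $\{\mathbf{a}',\mathbf{c}'\}\neq\{\mathbf{w}^*,\mathbf{c}\}$ must involve at least one vector whose last coordinate strictly exceeds that of $\mathbf{c}$ (since the two heights sum to $(w^*)_{r+1}+c_{r+1}>2c_{r+1}$). Combining this with Corollary \ref{atmost2} (bounding $|V_+|\leq 2$) and Lemma \ref{edgeproportional} (forcing any second vertex in $V_+$ to be proportional, and hence $J$-orthogonal, to $\mathbf{w}^*$), together with the closest-point argument from the proof of that lemma, the equation at $\mathbf{b}=\mathbf{w}^*+\mathbf{c}$ reduces to $2J(\mathbf{w}^*,\mathbf{c})f_{\mathbf{w}^*}f_{\mathbf{c}}=0$, whence $J(\mathbf{w}^*,\mathbf{c})=0$.

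With $J(\mathbf{w}^*,\mathbf{c})=0$ for all $\mathbf{c}\in\mathcal{C}$, we have $\mathrm{conv}(\mathcal{C})\subset\ker J(\mathbf{w}^*,\cdot)$. Lemma \ref{lemma12}(i) places $\frac{1}{2}\mathbf{d}$ inside $\mathrm{conv}(\mathcal{C})$, hence $J(\mathbf{w}^*,\mathbf{d})=0$; but Lemma \ref{Jdkernel} computes $J(\mathbf{w}^*,\mathbf{d})=-(w^*)_{r+1}/2$, so $(w^*)_{r+1}=0$. Then $\mathbf{w}^*$ lies in the hyperplane $\{v_{r+1}=0\}$ on which $J$ is negative definite, so $J$-nullity forces $\mathbf{w}^*=\mathbf{0}$, contradicting our convention that $\mathbf{0}\notin\mathcal{C}$. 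The symmetric case $\mathcal{C}\subset\{v_{r+1}\leq -1\}$ is handled by the same steps, and is in fact slightly easier: the conclusion $(w^*)_{r+1}=0$ immediately conflicts with $(w^*)_{r+1}<-1$, without any appeal to the negative definiteness of $J$ on $\{v_{r+1}=0\}$.

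The principal obstacle is the inductive vanishing of $J(\mathbf{w}^*,\mathbf{c})$: controlling the non-unique decompositions of $\mathbf{w}^*+\mathbf{c}$ requires a careful analysis of how a potential second vertex in $V_+$ and the intermediate-height elements of $\mathcal{C}$ can combine to produce sums equal to $\mathbf{w}^*+\mathbf{c}$. This is precisely where one must combine the height induction with the rigid structure imposed above $P$ by Lemma \ref{edgeproportional} and Corollary \ref{atmost2}, and making the ``alternative terms'' cancel in full generality is the most delicate step.
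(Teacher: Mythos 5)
Your setup (choosing a $J$-null vertex $\mathbf{w}^*$ of maximal last coordinate) and your endgame (from $J(\mathbf{w}^*,\cdot)\equiv 0$ on $\mathrm{conv}(\mathcal{C})\ni\frac{1}{2}\mathbf{d}$ conclude $J(\mathbf{w}^*,\mathbf{d})=-\tfrac{1}{2}(w^*)_{r+1}=0$ and then use negative definiteness of $J$ on $\{v_{r+1}=0\}$) are both sound. But the crux of your argument --- that $J(\mathbf{w}^*,\mathbf{c})=0$ for every $\mathbf{c}\in\mathcal{C}$ --- is not established, and your proposed mechanism does not close the gap. The relation (\ref{superpotentialcases}) at $\mathbf{b}=\mathbf{w}^*+\mathbf{c}$ is a sum over \emph{all} decompositions $\mathbf{a}'+\mathbf{c}'=\mathbf{b}$, and your height bookkeeping only tells you that in a competing decomposition one summand sits higher than $\mathbf{c}$; it does not make that summand equal to $\mathbf{w}^*$. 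Decompositions in which neither summand is $\mathbf{w}^*$ (for instance involving the possible second vertex above $P$, or two intermediate-height elements of $\mathcal{C}$) contribute terms $J(\mathbf{a}',\mathbf{c}')f_{\mathbf{a}'}f_{\mathbf{c}'}$ that the inductive hypothesis says nothing about, so the equation does not reduce to $2J(\mathbf{w}^*,\mathbf{c})f_{\mathbf{w}^*}f_{\mathbf{c}}=0$. (The induction is also shaky at the top: for $\mathbf{c}$ of the same maximal height as $\mathbf{w}^*$ the heights sum to exactly $2c_{r+1}$, so your ``one summand is strictly higher'' claim fails.) You flag this yourself as ``the most delicate step''; in fact it is the whole proof, and as proposed it would fail.

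The way to make the cancellation automatic is not height, but \emph{confinement to an edge}: if $\mathbf{a},\mathbf{x}$ lie on an edge of $\mathrm{conv}(\mathcal{C})$, any decomposition of $\mathbf{a}+\mathbf{x}$ by elements of $\mathcal{C}$ must again use points of that edge, which is what makes the closest point argument and Lemma \ref{lemma12} bite. This is how the paper proceeds: assuming all off-$P$ vertices lie on one side, Corollary \ref{atmost2} leaves one or two of them (proportional and null by Lemma \ref{edgeproportional} in the two-vertex case); since $\mathrm{conv}(\mathcal{C})$ is $(r+1)$-dimensional and the tangent cone at any vertex is spanned by its edges, every vertex lying in $P$ must share an edge with one of these off-$P$ vertices. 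Along each such edge the closest point argument plus nullity of the off-$P$ vertex gives $J(\mathbf{c},\cdot)=0$ on the whole line, hence on that $P$-vertex. One then contradicts dimension: the vertices of $\mathrm{conv}(\mathcal{C})$ in $P$ contain those of $\mathrm{conv}(\frac{1}{2}(\mathbf{d}+\tilde{\mathcal{W}}))$ and so span an $r$-dimensional subset of $P$, yet they would all lie in the proper affine subspace $\ker J(\mathbf{c},\cdot)\cap P$. Note this only requires the vanishing of $J(\mathbf{c},\cdot)$ on \emph{vertices} adjacent across edges, not on all of $\mathcal{C}$ as your plan demands; if you want to keep your ending via Lemma \ref{Jdkernel}, vanishing on all vertices (obtained this way) already suffices, since $\frac{1}{2}\mathbf{d}\in\mathrm{conv}(\mathcal{C})$.
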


\begin{proof}
	Assume for sake of contradiction that there are vertices on only one side of $P$, then there are either one or two on that side. Suppose that there is just one, denoted by $\textbf{c}$. Then it is adjacent to every vertex of $\mathrm{conv}(\mathcal{C})$ in $P$, which span an $r$-dimensional subset of $P$. However, they must all lie in $\mathrm{ker} J(\textbf{c},\cdot)\cap P\subsetneq P$, which is a contradiction. To see this, use the closest point argument on the edge connecting each such vertex with $\textbf{c}$, and use the fact that $\textbf{c}$ is null.
	Suppose that there are two, then by Lemma \ref{edgeproportional} they are given by $\textbf{c}$ and $\lambda\textbf{c}$ for some $\lambda\in\mathbb{R}$. Then each of the vertices in $P$ must be adjacent to at least one of these, and thus be in $\mathrm{ker} J(\textbf{c},\cdot)\cap P\subsetneq P$, a contradiction for the same reason as above.
\end{proof}
\begin{lem}\label{CinP}
	$\mathcal{C}\subset P$.
\end{lem}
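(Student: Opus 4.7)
The plan is to assume $\mathcal{C}\not\subset P$ and derive a contradiction with Lemma \ref{lemma12}(i) by showing that $\mathrm{conv}(\mathcal{C})\cap P$ is forced to have affine dimension at most $r-1$, contradicting the fact that $\mathrm{conv}(\tfrac{1}{2}(\mathbf{d}+\tilde{\mathcal{W}}))\subset \mathrm{conv}(\mathcal{C})\cap P$ is $r$-dimensional (by Lemma \ref{fullmeasure} and the inclusion $\mathbf{0}\in\tilde{\mathcal{W}}$). Lemma \ref{bothsidesP} supplies null vertices $\mathbf{a}$ above $P$ and $\mathbf{b}$ below $P$, and by Corollary \ref{atmost2} together with Lemma \ref{edgeproportional} the complete set of non-$P$ vertices is contained in $\{\mathbf{a},\lambda\mathbf{a},\mathbf{b},\mu\mathbf{b}\}$ (some entries possibly absent). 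Since $\mathbf{a}$ is a nontrivial null vector, Lemma \ref{Jdkernel} forces $a_{r+1}\neq 0$, hence $\mathbf{a}\not\propto\mathbf{d}$; as $J$ is nondegenerate, $\ker J(\mathbf{a},\cdot)\cap P$ is a proper $(r-1)$-dimensional affine subspace of $P$, and likewise for $\mathbf{b}$.

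The key step is to show that $J(\mathbf{a},\mathbf{v})=J(\mathbf{b},\mathbf{v})=0$ for every $P$-vertex $\mathbf{v}$ of $\mathrm{conv}(\mathcal{C})$. The upper cap $\mathrm{conv}(\mathcal{C})\cap\{v_{r+1}\geq-1\}$ is a polytope whose only vertices outside $P$ are $\mathbf{a}$ and (possibly) $\lambda\mathbf{a}$; hence any vertex of this cap that lies in $P$ is joined by at least one edge of the cap to one of the apex vertices, and such an edge is automatically an edge of $\mathrm{conv}(\mathcal{C})$ (since it does not lie inside the cutting hyperplane $P$). For such an edge $\overline{\mathbf{a}\mathbf{v}}$, the closest-point argument applies: the edge sits in $\{v_{r+1}\geq-1\}$, so the closest $\mathcal{C}$-point $\mathbf{x}$ to $\mathbf{a}$ (distinct from $\mathbf{a}$) satisfies $a_{r+1}+x_{r+1}>-2$, so that $\mathbf{a}+\mathbf{x}\notin\mathbf{d}+\tilde{\mathcal{W}}$. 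Lemma \ref{lemma12}(ii) then gives $J(\mathbf{a},\mathbf{x})=0$, and linear extension along the edge (as in Lemma \ref{edgeproportional}) yields $J(\mathbf{a},\mathbf{v})=0$. If $\mathbf{v}$ is only adjacent to $\lambda\mathbf{a}$, the same argument yields $J(\lambda\mathbf{a},\mathbf{v})=\lambda J(\mathbf{a},\mathbf{v})=0$, which is the same condition. The symmetric argument for the lower cap gives $J(\mathbf{b},\mathbf{v})=0$.

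Consequently every $P$-vertex of $\mathrm{conv}(\mathcal{C})$ lies in $\ker J(\mathbf{a},\cdot)\cap\ker J(\mathbf{b},\cdot)\cap P$, an affine subspace of dimension $r-2$ when $\mathbf{a}\not\propto\mathbf{b}$ and dimension $r-1$ otherwise. The remaining vertices of the polytope $\mathrm{conv}(\mathcal{C})\cap P$ are intersection points with $P$ of edges of $\mathrm{conv}(\mathcal{C})$ joining $\{\mathbf{a},\lambda\mathbf{a}\}$ to $\{\mathbf{b},\mu\mathbf{b}\}$, and all such crossings lie in $\operatorname{span}(\mathbf{a},\mathbf{b})\cap P$. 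Since $a_{r+1}\neq 0$, $\operatorname{span}(\mathbf{a},\mathbf{b})$ is not parallel to $P$, so this intersection is a 1-dimensional line when $\mathbf{a}\not\propto\mathbf{b}$ and a single point when $\mathbf{a}\propto\mathbf{b}$. The affine hull of the vertices of $\mathrm{conv}(\mathcal{C})\cap P$ therefore has dimension at most $(r-2)+1=r-1$ in the non-proportional case and at most $(r-1)+0=r-1$ in the proportional case, giving the desired contradiction.

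I expect the main obstacle to be making the adjacency step completely rigorous, especially when both $\mathbf{a}$ and $\lambda\mathbf{a}$ are present: one must argue that because the upper cap has only these two vertices outside its base $\mathrm{conv}(\mathcal{C})\cap P$, every base vertex that is a vertex of the cap is forced to share an edge with $\{\mathbf{a},\lambda\mathbf{a}\}$. This is a standard truncation property of polytopes, but it should be spelled out carefully to cover the various configurations of apex and crossing points.
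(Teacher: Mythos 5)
Your reduction has a genuine gap at the final dimension count, and it sits exactly where the paper's proof has to do its real work. Grant your adjacency claims: every vertex of $\mathrm{conv}(\mathcal{C})$ lying in $P$ satisfies $J(\mathbf{a},\cdot)=J(\mathbf{b},\cdot)=0$, so those vertices lie in $S_1=\ker J(\mathbf{a},\cdot)\cap\ker J(\mathbf{b},\cdot)\cap P$, while the crossing vertices of $\mathrm{conv}(\mathcal{C})\cap P$ lie in $S_2=\operatorname{span}(\mathbf{a},\mathbf{b})\cap P$. In the non-proportional case $S_1$ has dimension $r-2$ and $S_2$ is a line \emph{disjoint} from $S_1$ (if $\alpha\mathbf{a}+\beta\mathbf{b}\in S_1$ then $\alpha J(\mathbf{a},\mathbf{b})=\beta J(\mathbf{a},\mathbf{b})=0$, and $J(\mathbf{a},\mathbf{b})\neq 0$ because two non-proportional null vectors of the Lorentzian form $J$ are never $J$-orthogonal). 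The affine hull of the union of an $(r-2)$-dimensional affine subspace and a disjoint line can have dimension $(r-2)+1+1=r$, not $r-1$ as you wrote: ``adding dimensions'' is not a valid bound for the affine hull of a union. Since $\mathrm{conv}(\tfrac12(\mathbf{d}+\tilde{\mathcal{W}}))$ is exactly $r$-dimensional, no contradiction follows, so the case of an edge of $\mathrm{conv}(\mathcal{C})$ joining a vertex above $P$ to a non-proportional vertex below $P$ is not excluded by your argument. This is precisely the configuration the paper eliminates by other means: the closest-point argument on the crossing edge forces either proportionality or $\mathbf{a}+\mathbf{c}\in\mathbf{d}+\tilde{\mathcal{W}}$; the possibility $\mathbf{a}+\mathbf{c}=\mathbf{d}+\mathbf{w}$, $\mathbf{w}\neq\mathbf{0}$, is then ruled out through the vertex structure of $\mathrm{conv}(\mathcal{C})\cap P$, and $\mathbf{a}+\mathbf{c}=\mathbf{d}$ through a non-null vertex $\mathbf{f}=\tfrac12(\mathbf{d}+\mathbf{w})$ adjacent to both, using $J(\mathbf{a},\mathbf{f})+J(\mathbf{c},\mathbf{f})=J(\mathbf{d},\tfrac12(\mathbf{d}+\mathbf{w}))=\tfrac12\neq 0$. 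Your proposal contains no substitute for these steps.

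Two further points. In the proportional case your conclusion is correct but for a reason you did not state: the single crossing point is a multiple $c\mathbf{a}$, and $J(\mathbf{a},c\mathbf{a})=0$ because $\mathbf{a}$ is null, so it lies \emph{inside} $\ker J(\mathbf{a},\cdot)\cap P$; the arithmetic ``$(r-1)+0$'' by itself proves nothing. Also, the inference ``$a_{r+1}\neq 0$, hence $\mathbf{a}\not\propto\mathbf{d}$'' is backwards ($\mathbf{d}$ has nonzero last entry too); the correct reason is that $\mathbf{d}$ is not null (Lemma \ref{Jdkernel} gives $J(\mathbf{d},\mathbf{d})\neq 0$) while $\mathbf{a}$ is, so $\ker J(\mathbf{a},\cdot)$ is indeed not parallel to $P$. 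Finally, listing the non-$P$ vertices as $\{\mathbf{a},\lambda\mathbf{a},\mathbf{b},\mu\mathbf{b}\}$ tacitly assumes two vertices on the same side are adjacent; this needs justification (or a restructuring so that only the kernels $\ker J(\mathbf{a},\cdot)$, $\ker J(\mathbf{b},\cdot)$ of one representative per side are used), though it is a secondary issue compared with the crossing-edge case above.
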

\begin{proof}
	Suppose otherwise, then by Corollary \ref{atmost2} and Lemma \ref{bothsidesP}, there must be 1 or 2 vertices on either side of $P$. 
		
	Suppose there was a null vertex in $P$. Then it shares an edge with a (null) vertex on either side of $P$. By Lemma \ref{edgeproportional}, these three vertices are proportional and thus collinear, which is impossible.
	
	Assume that none of the $1,2$ or $4$ pairs of vertices with one on each side of $P$ forms an edge of $\mathrm{conv}(\mathcal{C})$. Then $\mathrm{conv}(\mathcal{C}\cap P)=\mathrm{conv}(\mathcal{C})\cap P$.

	There is no null vertex in $P$, so since $\mathrm{conv}(\frac{1}{2}(\textbf{d}+\tilde{\mathcal{W}}))\subset \mathrm{conv}(\mathcal{C})\cap P$ and every vertex outside of $\frac{1}{2}(\textbf{d}+\tilde{\mathcal{W}})$ is null by Lemma \ref{lemma12}, we have that $\mathrm{conv}(\frac{1}{2}(\textbf{d}+\tilde{\mathcal{W}}))=\mathrm{conv}(\mathcal{C})\cap P=\mathrm{conv}(\mathcal{C}\cap P)$, and that every vertex of $\mathrm{conv}(\frac{1}{2}(\textbf{d}+\tilde{\mathcal{W}}))$ is a vertex of $\mathcal{C}$. From this point on, we may follow the proof of Proposition 13 in \cite{BetancourtDancerWang2016}, which we reproduce here for completeness. Since $\textbf{0}$ is a vertex of $\mathrm{conv}(\tilde{\mathcal{W}})$ and there are no null vertices in $P$, there is a $\textbf{w}\in\mathcal{W}$ such that the vertices $\textbf{c}_0=\frac{1}{2}\textbf{d}$ and $\textbf{c}_1=\frac{1}{2}(\textbf{d}+\textbf{w})$ form an edge of $\mathrm{conv}(\mathcal{C})$. Then $J(\textbf{c}_0,\textbf{c}_0)=\frac{1}{4}$ and $J(\textbf{c}_0,\textbf{c}_1)=\frac{1}{4}$, and thus $J(\textbf{c}_0,\cdot)$ is constant and nonzero on the line segment $\overline{\textbf{c}_0\textbf{c}_1}$. Consider the element of $\mathcal{C}$ on $\overline{\textbf{c}_0\textbf{c}_1}$ which is closest to $\textbf{c}_0$. It satisfies the unique sum condition with $\textbf{c}_0$, which contradicts Lemma \ref{lemma12}.
	
	It only remains to check the case where there exists a pair of vertices $\textbf{a}$ and $\textbf{c}$, one on each side of $P$, such that $\overline{\textbf{a}\textbf{c}}$ is an edge of $\mathrm{conv}(\mathcal{C})$. Then either $\textbf{a}+\textbf{c}\in\textbf{d}+\tilde{\mathcal{W}}$ or $\textbf{a}$ and $\textbf{c}$ are proportional, as can be seen as follows.
	
	Let $\textbf{x}_a$ be the element on $\overline{\textbf{a}\textbf{c}}$ which is closest to but not equal to $\textbf{a}$. If $\textbf{x}_a=\textbf{c}$, then $\textbf{a},\textbf{c}$ satisfy the unique sum condition and Lemma \ref{lemma12}, together with the fact that $\textbf{a},\textbf{c}$ are null, implies the claim.
	
	If $\textbf{x}_a\neq \textbf{c}$, we note that $\textbf{a},\textbf{x}_a$ satisfy the unique sum condition. Similarly, we may define $\textbf{x}_c\in \overline{\textbf{a}\textbf{c}}$ such that $\textbf{c},\textbf{x}_c$ satisfy the unique sum condition. Since $\textbf{a}$ and $\textbf{c}$ lie on different sides of $P$, we have that at least one of $\frac{1}{2}(\textbf{a}+\textbf{x}_a)$ or $\frac{1}{2}(\textbf{c}+\textbf{x}_c)$ is not in $\frac{1}{2}(\textbf{d}+\tilde{\mathcal{W}})\subset P$; assume without loss of generality that it is $\frac{1}{2}(\textbf{a}+\textbf{x}_a)$. Then, by Lemma \ref{lemma12}, $J(\textbf{a},\textbf{x}_a)=0$, and by linearity $J(\textbf{a},\textbf{c})=0$, implying that $\textbf{a}$ and $\textbf{c}$ are proportional.

	In the case that $\textbf{a}+\textbf{c}\in\textbf{d}+\tilde{\mathcal{W}}$, then in fact $\textbf{a}+\textbf{c}=\textbf{d}$. To see this, note that if $\textbf{a}+\textbf{c}\in\textbf{d}+\mathcal{W}$, then $\frac{1}{2}\textbf{d}$ is a vertex since it lies outside of the $r$-dimensional affine hyperplane $\{s=-1\}+\overline{\textbf{a}\textbf{c}}$. Consider the convex set $\mathrm{conv}(\mathcal{C})\cap P$. It is clear that there exists a point $\textbf{x}\in P$ such that $\mathrm{conv}(\mathcal{C})\cap P = \mathrm{conv}((\mathcal{C}\cap P) \cup \{\textbf{x}\})$, and in fact one sees that $\{\textbf{x}\}=\overline{\textbf{a}\textbf{c}}\cap P$. Since $r\geq 2$, the vertices of $\mathrm{conv}(\mathcal{C})$ in $P$ are all elements of $\frac{1}{2}(\textbf{d}+\tilde{\mathcal{W}})$ and $\frac{1}{2}\textbf{d}$ has $\geq 2$ adjacent vertices in $\mathrm{conv}((\mathcal{C}\cap P) \cup \{\textbf{x}\})$, with at most one of them being $\textbf{x}$, so that there exists a $\textbf{w}\in\mathcal{W}$ such that $\frac{1}{2}(\textbf{d}+\textbf{w})$ shares an edge with $\frac{1}{2}\textbf{d}$ in $\mathrm{conv}(\mathcal{C})\cap P$. The two edges and vertices remain when we pass to $\mathrm{conv}(\mathcal{C})$, which leads to a contradiction for the same reason as in the proof of Proposition 13 of \cite{BetancourtDancerWang2016}. Hence, $\textbf{a}+\textbf{c}=\textbf{d}$.
	
	Let $\textbf{f}=\frac{1}{2}(\textbf{d}+\textbf{w})$ be a non-null vertex which shares an edge with both $\textbf{a}$ and $\textbf{c}$. Let $\textbf{x}_a$ denote the point of $\mathcal{C}$ on $\overline{\textbf{a}\textbf{f}}$ which is closest to $\textbf{a}$, and let $\textbf{x}_c$ denote the point of $\mathcal{C}$ on $\overline{\textbf{c}\textbf{f}}$ which is closest to $\textbf{c}$. Then we must have by Lemma \ref{lemma12} that $J(\textbf{a},\textbf{x}_a)=0$ and $J(\textbf{c},\textbf{x}_c)=0$. Since $J(\textbf{a},\cdot)$ and $J(\textbf{c},\cdot)$ are affine functions on the abovementioned edges and both $\textbf{a}$ and $\textbf{c}$ are null vertices, we must have $J(\textbf{a},\textbf{f})=J(\textbf{c},\textbf{f})=0$. This is impossible, as $J(\textbf{a},\textbf{f})+J(\textbf{c},\textbf{f})=J(\textbf{d},\frac{1}{2}(\textbf{d}+\textbf{w}))=\frac{1}{2}$. Therefore $\textbf{a}+\textbf{c}\notin \textbf{d}+\tilde{\mathcal{W}}$.
	
	In the final case, namely that $\textbf{a}$ and $\textbf{c}$ are proportional, then by Lemma \ref{bothsidesP} there is only one vertex on each side of $P$. Furthermore, $\frac{1}{2}\textbf{d}$ is not a vertex, since otherwise there must be a non-null vertex sharing an edge with $\frac{1}{2}\textbf{d}$, which is impossible.
	
	Let $\ell=\{\lambda \textbf{a}\: | \: \lambda\in\mathbb{R}\}$ be the (null) line passing through $\textbf{a}$ and $\textbf{c}$, intersecting $P$ at $\mathbf{l}$. Then $\textbf{l}\in\{s\geq 0\}$. Since all vertices of $\mathrm{conv}(\mathcal{C})\cap P$ except possibly $\textbf{l}$ are in $\{s=-1\}$, but $\frac{1}{2}\textbf{d}\in\mathrm{conv}(\mathcal{C})\cap P$, we have that $\textbf{l}\in\{s>0\}$.
	
	Note that $\mathrm{conv}(\mathcal{C})$ has nonzero $(r+1)$-dimensional measure, since it contains the line $\overline{\textbf{a}\textbf{c}}$ through the origin as well as $\mathrm{conv}(\tilde{\mathcal{W}})$, which is an $r$-dimensional subset of $P$. Since all vertices of $\mathrm{conv}(\mathcal{C})$ other than $\textbf{a}$ and $\textbf{c}$ are in $\{s=-1\}\cap P$, which is $(r-1)$-dimensional, there is a set $\mathcal{V}=\{\textbf{b}_i\}_{i=1}^r\subset\{s=-1\}\cap P $ of vertices adjacent to $\textbf{a}$ such that $\dim\mathrm{conv}(\mathcal{V})=r-1$. Applying the closest point argument, we get $J(\textbf{a},\textbf{b}_i)=0$ for each $i$. The $(r-1)$-dimensional affine subspace $\ker J(\textbf{a},\cdot)\cap P$ contains both $\mathcal{V}$ and $\textbf{l}\in\overline{\textbf{a}\textbf{c}}\cap P$, but $\textbf{l}\notin \{s=-1\}$, which is a contradiction.
	\end{proof}

From this point on, in the light of Lemma \ref{CinP}, we will implicitly consider all subsets of $\mathbb{R}^{r+1}$ to be subsets of $P$.

\begin{lem}\label{notd/2}
	$\frac{1}{2}\textbf{d}$ is not a vertex.
\end{lem}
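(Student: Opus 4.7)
The plan is to argue by contradiction, assuming that $\mathbf{c}_0 := \tfrac{1}{2}\mathbf{d}$ is a vertex of $\mathrm{conv}(\mathcal{C})$. The crucial preliminary fact is a direct computation from the formula (\ref{polarisedJ}) for $J$, which gives $J(\mathbf{d}, \mathbf{v}) = -v_{r+1}/2$ identically; restricted to $P$ (where $v_{r+1} = -1$), this yields $J(\mathbf{c}_0, \mathbf{v}) \equiv \tfrac{1}{4}$. In particular, $J(\mathbf{c}_0, \mathbf{x}) \ne 0$ for every $\mathbf{x} \in \mathcal{C}$.

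First I would show that every vertex $\mathbf{v}$ adjacent to $\mathbf{c}_0$ via an edge of $\mathrm{conv}(\mathcal{C})$ takes the form $\mathbf{v} = \mathbf{c}_0 + \mathbf{w}^*$ for some $\mathbf{w}^* \in \mathcal{W}$ satisfying the null condition $\sum_k (w^*)_k^2/d_k = \tfrac{1}{4}$. Applying the closest-point argument to $\overline{\mathbf{c}_0 \mathbf{v}}$ from the $\mathbf{c}_0$-end, the closest $\mathcal{C}$-element $\mathbf{x} \ne \mathbf{c}_0$ satisfies the unique sum condition with $\mathbf{c}_0$; since $J(\mathbf{c}_0, \mathbf{x}) \ne 0$, Lemma \ref{lemma12}(ii) forces $\mathbf{x} = \mathbf{c}_0 + \mathbf{w}^*$ for some $\mathbf{w}^* \in \mathcal{W}$, whence $\mathbf{v} = \mathbf{c}_0 + \lambda \mathbf{w}^*$ with $\lambda \ge 1$ by collinearity. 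A type-by-type check shows that $\mathcal{W}$ contains no pair of elements that are positive scalar multiples of each other with ratio $\ne 1$, so $\mathbf{v} \in \tfrac{1}{2}(\mathbf{d}+\mathcal{W})$ is excluded (it would require $\lambda = \tfrac{1}{2}$, contradicting $\lambda \ge 1$); Lemma \ref{lemma12}(ii) then forces $\mathbf{v}$ to be $J$-null. Applying the closest-point argument from the $\mathbf{v}$-end and exploiting that $J(\mathbf{v}, \cdot)$ is affine on the edge with $J(\mathbf{v}, \mathbf{v}) = 0$ and $J(\mathbf{v}, \mathbf{c}_0) = \tfrac{1}{4} \ne 0$ then pins down $\lambda = 1$.

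For two distinct adjacent vertices $\mathbf{v}_i = \mathbf{c}_0 + \mathbf{w}_i^*$ and $\mathbf{v}_j = \mathbf{c}_0 + \mathbf{w}_j^*$, bilinearity together with Lemma \ref{Jdkernel} yields
\begin{equation*}
J(\mathbf{v}_i, \mathbf{v}_j) = \tfrac{1}{4} - \sum_k \frac{(w_i^*)_k (w_j^*)_k}{d_k}.
\end{equation*}
Since $\langle \mathbf{a}, \mathbf{b} \rangle_\mathbf{d} := \sum_k a_k b_k/d_k$ is positive definite on $\mathbb{R}^r$, the Cauchy--Schwarz inequality together with the null conditions $\langle \mathbf{w}_\ell^*, \mathbf{w}_\ell^* \rangle_\mathbf{d} = \tfrac{1}{4}$ gives $\langle \mathbf{w}_i^*, \mathbf{w}_j^* \rangle_\mathbf{d} < \tfrac{1}{4}$ strictly, as $\mathbf{w}_i^*$ and $\mathbf{w}_j^*$ are distinct and hence non-proportional. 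Thus $J(\mathbf{v}_i, \mathbf{v}_j) > 0$.

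To extract the contradiction, I would examine a 2-face $F$ of $\mathrm{conv}(\mathcal{C})$ containing the edges $\overline{\mathbf{c}_0 \mathbf{v}_i}$ and $\overline{\mathbf{c}_0 \mathbf{v}_j}$ for some distinct pair of adjacent vertices; such an $F$ exists because $\mathrm{conv}(\mathcal{C})$ is $r$-dimensional with $r \ge 2$. If $F$ is a triangle, then $\overline{\mathbf{v}_i \mathbf{v}_j}$ is an edge of $\mathrm{conv}(\mathcal{C})$, and the closest-point argument on it (using the nullity of $\mathbf{v}_i$) forces $J(\mathbf{v}_i, \mathbf{v}_j) = 0$, contradicting the positivity just established. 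Otherwise $F$ has an additional vertex $\mathbf{u}$; writing $\mathbf{u} - \mathbf{c}_0 = a\mathbf{w}_i^* + b\mathbf{w}_j^*$ and imposing $J(\mathbf{v}_i, \mathbf{u}) = J(\mathbf{v}_j, \mathbf{u}) = 0$ (forced by closest-point on the $F$-edges at $\mathbf{u}$) together with $\mathbf{u}$ being either null or in $\tfrac{1}{2}(\mathbf{d}+\mathcal{W})$ yields a small algebraic system whose solutions force $\langle \mathbf{w}_i^*, \mathbf{w}_j^* \rangle_\mathbf{d}$ to take specific values --- for instance $\tfrac{3}{4}$ in the $\tfrac{1}{2}(\mathbf{d}+\mathcal{W})$ case --- which violate the Cauchy--Schwarz bound $\tfrac{1}{4}$. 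The main obstacle I anticipate is this case analysis for additional vertices of $F$, particularly the null-vertex candidate (which algebraically allows $\langle \mathbf{w}_i^*, \mathbf{w}_j^* \rangle_\mathbf{d} = \tfrac{1}{8}$), where excluding the configuration uniformly may require varying the choice of 2-face or invoking further structural constraints on $\mathcal{W}$.
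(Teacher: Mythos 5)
Your first half is sound and essentially matches the paper: using $J(\tfrac{1}{2}\mathbf{d},\cdot)\equiv\tfrac{1}{4}$ on $P$ and the closest point argument from both ends of an edge at $\tfrac{1}{2}\mathbf{d}$, every adjacent vertex is forced to be $J$-null and of the form $\tfrac{1}{2}\mathbf{d}+\mathbf{w}^*$ with $\mathbf{w}^*\in\mathcal{W}$ (so it lies in $\{s=-2\}$), and your Cauchy--Schwarz computation $J(\mathbf{v}_i,\mathbf{v}_j)=\tfrac14-\langle\mathbf{w}_i^*,\mathbf{w}_j^*\rangle_{\mathbf{d}}>0$ is correct. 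The genuine gap is in the endgame: your contradiction only closes when the chosen $2$-face at $\tfrac{1}{2}\mathbf{d}$ is a triangle, i.e.\ when two neighbours $\mathbf{v}_i,\mathbf{v}_j$ of $\tfrac{1}{2}\mathbf{d}$ are themselves joined by an edge of $\mathrm{conv}(\mathcal{C})$, and nothing guarantees this. In a larger polygon the extra vertices $\mathbf{u}$ need not be adjacent to both $\mathbf{v}_i$ and $\mathbf{v}_j$, so the two relations $J(\mathbf{v}_i,\mathbf{u})=J(\mathbf{v}_j,\mathbf{u})=0$ are not "forced" (and even when $\mathbf{u}$ is adjacent to $\mathbf{v}_i$, the closest point argument leaves the alternative $\mathbf{v}_i+\mathbf{x}\in\mathbf{d}+\tilde{\mathcal{W}}$ open unless you control $s(\mathbf{u})$, which you cannot do a priori for a null $\mathbf{u}$). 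You acknowledge this unresolved case yourself, so as written the proof is incomplete.

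The paper avoids the $2$-face analysis entirely. Having shown that all ($\geq 2$) neighbours of $\tfrac{1}{2}\mathbf{d}$ lie in $\{s=-2\}$, it notes that there are then at least two vertices in the open half-space $\{s<-1\}$, and that vertices strictly inside a half-space are joined by an edge path of $\mathrm{conv}(\mathcal{C})$ staying strictly inside it. This produces a pair of \emph{adjacent} null vertices in $\{s<-1\}$ --- not necessarily both neighbours of $\tfrac{1}{2}\mathbf{d}$ --- and for such a pair the closest point argument applies cleanly: their sums with points on the connecting edge have coordinate sum $<-2$, so they cannot lie in $\mathbf{d}+\tilde{\mathcal{W}}$, whence $J$ vanishes on the whole segment; nullity of the segment forces the two endpoints to be proportional, hence equal since both lie in $P$, a contradiction. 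If you replace your $2$-face case analysis by this half-space connectivity step (your Cauchy--Schwarz inequality can then serve as the final contradiction in place of the proportionality argument, but only for edges between neighbours of $\tfrac{1}{2}\mathbf{d}$; for a general adjacent null pair in $\{s<-1\}$ you still need the proportionality argument), the proof goes through.
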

\begin{proof} There cannot be a non-null vertex adjacent to $\frac{1}{2}\textbf{d}$. Let $\textbf{a}$ be a vertex adjacent to $\frac{1}{2}\textbf{d}$, then $J(\textbf{a},\textbf{a})=0$ and $J(\frac{1}{2}\textbf{d},\textbf{a})=\frac{1}{4}$, so that by the closest point argument and Lemma \ref{lemma12} we must have that the line segment between them is empty and that $\textbf{a}=\frac{1}{2}\textbf{d}+\textbf{w}$ for some $\textbf{w}\in\mathcal{W}$. All such vertices $\textbf{a}$, of which in all nontrivial cases $r\geq 2$ there exists more than one, therefore lie in $\{s=-2\}$. Consequently, there are at least two vertices in the half-plane $\{s<-1\}$, which are connected by edges lying entirely in $\{s<-1\}$. Thus, there is least one pair of adjacent (null) vertices in $\{s<-1\}$, which is impossible by the closest point argument and Lemma \ref{lemma12}.
\end{proof}

\begin{lem}\label{abrange}
	Let $a$ and $b$ be the number of (necessarily null) vertices in $\{s>-1\}$ and $\{s<-1\}$ respectively. Then $a\in\{1,2\}$ and $b\in\{0,1\}$.
\end{lem}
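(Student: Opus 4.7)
The plan is to combine the closest-point argument used already in the excerpt with a standard monotone-path fact for polytopes. The monotone-path fact I would use: for any linear functional $\ell$ on a polytope $Q$ and any threshold $c$, the vertices of $Q$ with $\ell>c$ induce a connected subgraph of the $1$-skeleton of $Q$ via edges lying in $\{\ell>c\}$---start from any such vertex, walk along edges of strictly increasing $\ell$ to a maximiser (since a convex polytope has no non-global local max), and connect multiple maximisers through the face on which $\ell$ is maximised. Applied to $\ell=s$ on $\mathrm{conv}(\mathcal{C})$ at $c=\pm 1$, the sets $V_+$ and $V_-$ of (necessarily null, by Lemmas \ref{notd/2} and \ref{lemma12}) vertices in $\{s>-1\}$ and $\{s<-1\}$ each form a connected subgraph. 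The bound $a\geq 1$ is then immediate: since $s(\tfrac12\mathbf{d})=0$ and $\tfrac12\mathbf{d}\in\mathrm{conv}(\mathcal{C})$, some vertex must have $s>-1$, and such a vertex lies in $V_+$.

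For $b\leq 1$ I would show that no edge of $\mathrm{conv}(\mathcal{C})$ can connect two elements of $V_-$. Assume such an edge $\overline{\mathbf{b}\mathbf{b}'}$ exists and let $\mathbf{x}\in\mathcal{C}$ be the point on it closest to but distinct from $\mathbf{b}$, as in the closest-point argument. Note $s(\mathbf{x})<-1$, since $\mathbf{x}$ lies strictly between two points with $s<-1$. By Lemma \ref{lemma12}(ii) one of three cases holds: $J(\mathbf{b},\mathbf{x})=0$, $\mathbf{b}+\mathbf{x}=\mathbf{d}$, or $\mathbf{b}+\mathbf{x}\in\mathbf{d}+\mathcal{W}$. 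The first, as in Lemma \ref{edgeproportional}, forces $\mathbf{b},\mathbf{b}'$ to be proportional and hence equal in the affine hyperplane $P$. The second yields $s(\mathbf{b})+s(\mathbf{x})=0$, contradicting both being $<-1$. The third yields $s(\mathbf{b})+s(\mathbf{x})=-2$, still contradicting both being strictly less than $-1$. Hence no such edge exists, and connectivity forces $|V_-|\leq 1$.

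For $a\leq 2$ I repeat the trichotomy for an edge $\overline{\mathbf{a}_1\mathbf{a}_2}$ in $V_+$. Now $s(\mathbf{x})>-1$, which rules out $\mathbf{a}_1+\mathbf{x}\in\mathbf{d}+\mathcal{W}$ (which would need $s$-sum $-2$), while $J(\mathbf{a}_1,\mathbf{x})=0$ is excluded as before. The only surviving option is $\mathbf{a}_1+\mathbf{x}=\mathbf{d}$, giving $\mathbf{x}=\mathbf{d}-\mathbf{a}_1$, whence $\tfrac12\mathbf{d}=\tfrac12(\mathbf{a}_1+\mathbf{x})$ lies on the edge. In particular the line through $\mathbf{a}_1,\mathbf{a}_2$ is a chord of the null ellipsoid $\{\mathbf{v}\in P \,:\, \sum_i(2v_i-d_i)^2/d_i=1\}$ through its centre $\tfrac12\mathbf{d}$, and so meets the ellipsoid in exactly two null points. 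Therefore each $\mathbf{a}_1\in V_+$ has at most one $V_+$-neighbour (necessarily $\mathbf{d}-\mathbf{a}_1$); the induced subgraph on $V_+$ has maximum degree $\leq 1$; and combined with its connectivity this gives $|V_+|\leq 2$. I expect the only delicate point to be the polytope monotone-path fact, but this is a standard observation; the rest is direct case-work with Lemma \ref{lemma12}.
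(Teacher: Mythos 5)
Your proof is correct, and although it runs on the same engine as the paper's argument --- the closest-point argument feeding into Lemma \ref{lemma12}, with the bookkeeping that a pairwise sum in $\mathbf{d}+\mathcal{W}$ forces the two $s$-values to add to $-2$, a sum equal to $\mathbf{d}$ forces them to add to $0$, and the $J$-orthogonal case forces two null points of $P$ to be proportional, hence equal --- the combinatorial packaging is genuinely different. The paper applies the closest-point/unique-sum reasoning to the segment joining an \emph{arbitrary} pair of same-side vertices (not assumed adjacent), concludes that any two vertices in $\{s>-1\}$ sum to $\mathbf{d}$, which caps their number at two, and kills a second vertex in $\{s<-1\}$ by the same averaging argument. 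You instead invoke the unique-sum condition only on genuine edges of $\mathrm{conv}(\mathcal{C})$, where it is airtight, and recover the global counts from two ingredients the paper does not use explicitly: the standard fact that the vertices of a polytope in a superlevel (or sublevel) set of an affine functional induce a connected subgraph of the $1$-skeleton, and the observation that the null locus in $P$ is an ellipsoid centred at $\frac{1}{2}\mathbf{d}$, so two null vertices on a line through the centre are antipodal; this gives maximum degree one inside $V_+$ and no edges inside $V_-$, hence $a\le 2$ and $b\le 1$. What your route buys is rigour at the one delicate point: the unique-sum conclusion of the closest-point argument really uses that the segment is a face, so restricting to edges avoids having to justify it for non-adjacent pairs, which the paper's wording glosses over. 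What the paper's route buys is brevity and an explicit byproduct used downstream (when $a=2$ the two vertices are $\frac{1}{2}(\mathbf{d}\pm\mathbf{v})$); note that your argument recovers this as well, since when $|V_+|=2$ connectivity forces the two vertices to be adjacent and hence, by your ellipsoid step, to sum to $\mathbf{d}$.
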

\begin{proof}
	Clearly $a\geq 1$, since $\frac{1}{2}\textbf{d}\in\{s\geq 0\}\cap\mathrm{conv}(\mathcal{C})$.
	
	Let $\textbf{a}_1,\textbf{a}_2$ be (null) vertices in $\{s>-1\}$. Let $\textbf{x}_1, \textbf{x}_2$ be the points on $\overline{\textbf{a}_1\textbf{a}_2}$ closest to but not equal to $\textbf{a}_1, \textbf{a}_2$ respectively. Then both $\textbf{a}_1+\textbf{x}_1$ and $\textbf{a}_2+\textbf{x}_2$ are in $\textbf{d}+\tilde{\mathcal{W}}$. However, there is only one point of $\frac{1}{2}(\textbf{d}+\tilde{\mathcal{W}})$ outside of $\{s=-1\}$, so this is impossible unless $\textbf{x}_1=\textbf{a}_2$ and $\textbf{x}_2=\textbf{a}_1$, in which case $\textbf{a}_1,\textbf{a}_2$ satisfy the unique sum condition. Lemma \ref{lemma12} yields that either $J(\textbf{a}_1,\textbf{a}_2)=0$, which implies that $\textbf{a}_1,\textbf{a}_2$ are collinear, contradicting Lemma \ref{CinP}, or $\textbf{a}_1+\textbf{a}_2=\textbf{d}$. Since their pairwise sums are all the same, there cannot be $\geq 3$ vertices in $\{s>-1\}$, so $a\in\{1,2\}$.
	
	If there are 2 vertices in $\{s<-1\}$, then by the same argument they must average to an element of $\frac{1}{2}(\textbf{d}+\tilde{\mathcal{W}})$ in $\{s<-1\}$, which is impossible. Therefore $b\leq 1$.
\end{proof}

\begin{lem}\label{20}
	There are no superpotentials such that $(a,b)=(2,0)$.
\end{lem}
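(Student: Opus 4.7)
By Lemma \ref{abrange}, write $\textbf{a}_1=\tfrac{1}{2}(\textbf{d}+\textbf{x}_1)$, $\textbf{a}_2=\tfrac{1}{2}(\textbf{d}-\textbf{x}_1)$ with $\textbf{x}_1\in\mathbb{R}^{r+1}$ having last coordinate zero and $\sum_i x_{1,i}^2/d_i=1$. Direct application of Lemma \ref{lemma11} yields $J(\textbf{a}_1,\textbf{a}_2)=\tfrac{1}{2}$ and $J(\textbf{d},\textbf{f})=\tfrac{1}{2}$ for each non-null vertex $\textbf{f}=\tfrac{1}{2}(\textbf{d}+\textbf{w})$, whence $J(\textbf{a}_1,\textbf{f})+J(\textbf{a}_2,\textbf{f})=\tfrac{1}{2}$.

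I first apply the closest point argument on each edge $\overline{\textbf{a}_i\textbf{f}}$ joining $\textbf{a}_i$ to a non-null neighbour $\textbf{f}$ to conclude $J(\textbf{a}_i,\textbf{f})=0$. The required verification that $\textbf{a}_i+\textbf{x}\notin\textbf{d}+\tilde{\mathcal{W}}$ for the closest point $\textbf{x}$ reduces to an $s$-value computation along $(1+t)\textbf{a}_i+(1-t)\textbf{f}$: the only relevant value of $t\in[0,1)$ corresponds to $\textbf{x}=\textbf{a}_{3-i}$, which cannot lie on $\overline{\textbf{a}_i\textbf{f}}$ since $\textbf{a}_{3-i}$ is itself a vertex. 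From this the subsidiary claim that no non-null vertex is adjacent to both $\textbf{a}_1$ and $\textbf{a}_2$ follows at once, since otherwise $J(\textbf{d},\textbf{f})=0\neq\tfrac{1}{2}$, contradicting either the computation above or Lemma \ref{Jdkernel}.

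Let $N_i$ denote the set of non-null neighbours of $\textbf{a}_i$; by the subsidiary claim $N_1\cap N_2=\emptyset$. If $\textbf{a}_1$ and $\textbf{a}_2$ are not adjacent (Case A), every neighbour of $\textbf{a}_1$ is non-null, and the $\geq r$ neighbours, together with $\textbf{a}_1$ itself, all lie in $\ker J(\textbf{a}_1,\cdot)\cap P$. Since $J(\textbf{a}_1,\textbf{a}_2)\neq 0$ we have $P\not\subset\ker J(\textbf{a}_1,\cdot)$, so this intersection is $(r-1)$-dimensional. This forces the tangent cone at $\textbf{a}_1$ to lie in an $(r-1)$-dimensional affine subspace, contradicting the vertex status of $\textbf{a}_1$ in the $r$-dimensional polytope $\mathrm{conv}(\mathcal{C})$.

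The main obstacle is Case B, where $\overline{\textbf{a}_1\textbf{a}_2}$ is an edge. For $\textbf{g}\in N_2$ the subsidiary claim gives $J(\textbf{a}_1,\textbf{g})=\tfrac{1}{2}$. The equation $(\ref{superpotentialcases})$ at $\textbf{b}=\textbf{a}_1+\textbf{g}$ has vanishing right-hand side (since $s(\textbf{a}_1)\in(-1,1)$ excludes $\tfrac{1}{2}(\textbf{x}_1+\textbf{w}_g)$ from $\tilde{\mathcal{W}}$), while the pair $(\textbf{a}_1,\textbf{g})$ contributes $f_{\textbf{a}_1}f_{\textbf{g}}\neq 0$, so another pair in $\mathcal{C}^2$ summing to $\textbf{a}_1+\textbf{g}$ must exist. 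An $s$-value enumeration leaves, generically, only the candidate $(\textbf{a}_2,\textbf{x}_1+\textbf{g})$, which forces $\textbf{w}_g+2\textbf{x}_1\in\mathcal{W}$ and hence $s(\textbf{a}_1)=0$. Symmetrically, the equation at $\textbf{b}=\textbf{a}_2+\textbf{f}$ for $\textbf{f}\in N_1$ demands $\textbf{w}_f-2\textbf{x}_1\in\mathcal{W}$. Iterating these shifts by $\pm 2\textbf{x}_1$ on the finite set $\mathcal{W}$ produces an infinite orbit, a contradiction. The residual technical point is to handle the low-dimensional coincidences (such as $s(\textbf{a}_1)=\tfrac{1}{3}$, where a null-null pair becomes an additional candidate, or non-vertex elements of $\mathcal{C}$), which must be treated by separate inspection.
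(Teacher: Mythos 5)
Your first half coincides with the paper's: the two vertices in $\{s>-1\}$ are $\tfrac12(\mathbf{d}\pm\mathbf{x}_1)$ with $\sum_i x_{1,i}^2/d_i=1$, the closest point argument shows no vertex in $\{s=-1\}$ is adjacent to both, and hence $J$ of each such vertex with one of $\tfrac12(\mathbf{d}\pm\mathbf{x}_1)$ vanishes; your Case A is essentially the paper's kernel argument and is fine (the assertion that all neighbours of $\mathbf{a}_1$ are non-null is not justified, but it is also not needed, since the closest point argument applies to any neighbour in $\{s=-1\}$). The genuine gaps are in Case B. First, equation (\ref{superpotentialcases}) at $\mathbf{b}=\mathbf{a}_1+\mathbf{g}$ sums over \emph{all} pairs of elements of $\mathcal{C}$, and $\mathcal{C}$ may contain non-vertex points with arbitrary $s$-values in $[-1,\max(\pm s(\mathbf{a}_1))]$; pairs of such points can sum to $\mathbf{b}$, and Lemma \ref{lemma12} constrains only vertices and unique-sum pairs, so your ``$s$-value enumeration'' does not reduce the cancelling term to the single candidate $(\mathbf{a}_2,\mathbf{x}_1+\mathbf{g})$. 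Even granting that candidate, $\mathbf{x}_1+\mathbf{g}\in\mathcal{C}$ does not force $\mathbf{w}_g+2\mathbf{x}_1\in\mathcal{W}$ unless $\mathbf{x}_1+\mathbf{g}$ is a non-null \emph{vertex}, which is not established. Deferring ``non-vertex elements of $\mathcal{C}$'' as a residual technicality misplaces the difficulty: controlling non-unique sums is exactly why the paper's proof (and the Dancer--Wang classification it follows) never leaves the vertex/edge, unique-sum setting. Second, the ``infinite orbit'' step fails: the relation $\mathbf{w}_g+2\mathbf{x}_1\in\mathcal{W}$ is derived only for weights of vertices adjacent to $\mathbf{a}_2$, and the shifted vector is not shown to correspond to such a vertex again, so the shift cannot be iterated; finitely many one-step shift relations inside the finite set $\mathcal{W}$ are perfectly consistent. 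Indeed, the configuration of Theorem \ref{classification}(\ref{newsup}) contains precisely such a relation, $(1,-2,0)+2(0,1,-1)=(1,0,-2)\in\mathcal{W}$ with $\pm(0,1,-1)$ the null pair, and there it is part of a consistent superpotential rather than an obstruction.

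By contrast, after the common first step the paper extracts from $J(\mathbf{d}+\mathbf{w},\mathbf{d}\pm\mathbf{v})=0$ that every vertex of $\mathrm{conv}(\mathcal{W})$ lies on one of the two parallel hyperplanes $\sum_i v_ix_i/d_i=\pm1$, combines this with the null condition $\sum_i v_i^2/d_i=1$, and then eliminates type I, type II and type III vertices in turn, using the arithmetic of the $d_i$ and structural facts about which weight vectors must accompany which (Lemma 4.1, Propositions 4.1--4.2 and Theorem 3.5 of Dancer--Wang), never needing to analyse a non-unique sum. Your argument uses neither the null condition nor the type structure of $\mathcal{W}$; to repair Case B you would have to either stay within unique-sum configurations as the paper does, or find an independent way to control non-vertex elements of $\mathcal{C}$, and in either case some version of the weight-vector case analysis appears unavoidable.
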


\begin{proof}
	The two vertices in $\{s>-1\}$ must be $\frac{1}{2}(\textbf{d}\pm \textbf{v})$ for some $\textbf{v}$. Furthermore, there cannot be a null vertex in $P$, since its sum with at least one of $\frac{1}{2}(\textbf{d}\pm \textbf{v})$ must be in $\textbf{d}+\tilde{\mathcal{W}}$, which is impossible.
	
	There cannot be a vertex $\frac{1}{2}(\textbf{d}+\textbf{w})$ which is adjacent to both $\frac{1}{2}(\textbf{d}+\textbf{v})$ and $\frac{1}{2}(\textbf{d}-\textbf{v})$, as can be seen by using the closest point argument. However, each vertex is adjacent to one of them, so either $J(\textbf{d}+\textbf{w},\textbf{d}+\textbf{v})=0$, or $J(\textbf{d}+\textbf{w},\textbf{d}-\textbf{v})=0$.
	
	More explicitly, we are saying that all vertices of $\mathrm{conv}(\frac{1}{2}(\textbf{d}+\mathcal{W}))=\mathrm{conv}(\mathcal{C})\cap \{s=-1\}$ must take the form $\frac{1}{2}(\textbf{d}+\textbf{x})$, where
	\begin{equation}\label{parallelplanes}
		\sum_{i=1}^r \frac{v_ix_i}{d_i} = \pm 1.
	\end{equation}
	Furthermore, the two vertices $\frac{1}{2}(\textbf{d}\pm \textbf{v})$ must be null, which gives
	\begin{equation}\label{vnull}
		\sum_{i=1}^r \frac{v_i^2}{d_i}=1.
	\end{equation}
	These two equations together imply that type I vectors cannot be vertices of $\mathrm{conv}(\mathcal{W})$.

	Suppose that there were a type II vertex $(1^1,-1^2,-1^3)$ of $\mathrm{conv}(\mathcal{W})$. Then 
	\begin{equation*}
		\frac{v_1}{d_1} - \frac{v_2}{d_2} - \frac{v_3}{d_3} = \pm 1, \qquad \frac{v_1^2}{d_1} + \frac{v_2^2}{d_2} + \frac{v_3^2}{d_3}\leq 1, 
	\end{equation*}
	with $d_1,d_2,d_3\geq 2$. Suppose that another type II vector of the ``triplet'', say $(-1^1,1^2,-1^3)$, is also a vertex of $\mathrm{conv}(\mathcal{W})$. Then
	\begin{equation*}
		-\frac{v_1}{d_1} + \frac{v_2}{d_2} - \frac{v_3}{d_3} = \pm 1,
	\end{equation*}
	which leads to
	\begin{equation*}
		-\frac{2v_3}{d_3}=\begin{cases*}
			0\\
			\pm 2
		\end{cases*}.
	\end{equation*}
	The case $v_3=\pm d_3$ leads to a contradiction. If $v_3=0$, then 
	\begin{equation*}
		\frac{v_1}{d_1}-\frac{v_2}{d_2}=\pm 1, \qquad \frac{v_1^2}{d_1} + \frac{v_2^2}{d_2}\leq 1
	\end{equation*}
	together imply that $v_1=\pm 1$, $v_2=\pm 1$, $d_1=d_2=2$, and also that $\textbf{v}=\pm(1^1,-1^2)$. This implies that all vertices of $\mathrm{conv}(\mathcal{W})$ lie in the two planes
	\begin{equation*}
		\frac{x_1}{2}-\frac{x_2}{2}=\pm 1,
	\end{equation*}
	and thus are either type III vectors with the $-2$ in the first or second factor, or type II vectors $(1^1,-1^2,-1^i)$ or $(-1^1,1^2,-1^i)$. In particular, we note that $(-1^1,-1^2,1^3)$ cannot be a vertex, so we must have both $\textbf{w}_1=(-2^2,1^3)$ and $\textbf{w}_2=(-2^1,1^3)$ as vertices. Thus, by Theorem 3.5 of \cite{DancerWang2005}, $0=J(\textbf{d}+(-2^2,1^3), \textbf{d}+(1^1,-1^2,-1^3)) = 1/d_3$, which is a contradiction.
	
	Therefore $(-1^1,1^2,-1^3), (-1^1,-1^2,1^3)$ are not vertices of $\mathrm{conv}(\mathcal{W})$. Then $(-2^1,1^3)$ and $(-2^2,1^3)$ are adjacent vertices, so $J(\textbf{d}+(-2^1,1^3), \textbf{d}+(-2^2,1^3))=0$ and thus $d_3=1$, again a contradiction.
	
	Thus, there can only be type III vertices in $\mathrm{conv}(\mathcal{W})$. In fact, there are no type II vectors in $\mathcal{W}$. To see this, assume for sake of contradiction that $(1^1,-1^2,-1^3)\in\mathcal{W}$. Then, by Proposition 4.1(i) and (vii) of \cite{DancerWang2005},  $(1^1,-2^2),(-2^1,1^2),(1^1,-2^3),(-2^1,1^3),(1^2,-2^3),\\(-2^2,1^3)\in\mathcal{W}$. As a result, $d_1=d_2=d_3=4$, and (\ref{parallelplanes}),(\ref{vnull}) require that for each $i,j=1,2,3$ with $i\neq j$,
	\begin{equation*}
		-\frac{v_i}{2} + \frac{v_j}{4} = \pm 1, \qquad \frac{v_i}{4}-\frac{v_j}{2}=\pm 1, \qquad \frac{v_1^2}{4} + \frac{v_2^2}{4} + \frac{v_3^2}{4}\leq 1.
	\end{equation*}
	One checks explicitly that no solutions exist, so there are no type II vectors in $\mathcal{W}$.
	
	Suppose that $(-2^1,1^2)$ is a vertex. Since $(1^i,-2^j)$ and $(1^j,-2^k)$ cannot be adjacent vertices if $i\neq k$, we obtain by ``chasing around'' Figure \ref{diagram} that $(1^2,-2^3)$ is a vertex, and either $(-2^2,1^3)$ or $(-2^2,1^1)$ is. By Theorem 3.5 of \cite{DancerWang2005}, $d_2=1$, which is a contradiction. Therefore there are no type III vectors in $\mathrm{conv}(\mathcal{W})$, and the proof is complete.
\end{proof}

\begin{figure}	
	\hspace{1cm}
	\begin{center}
		\vspace{-.3in}
		\includegraphics[width=0.7\textwidth]{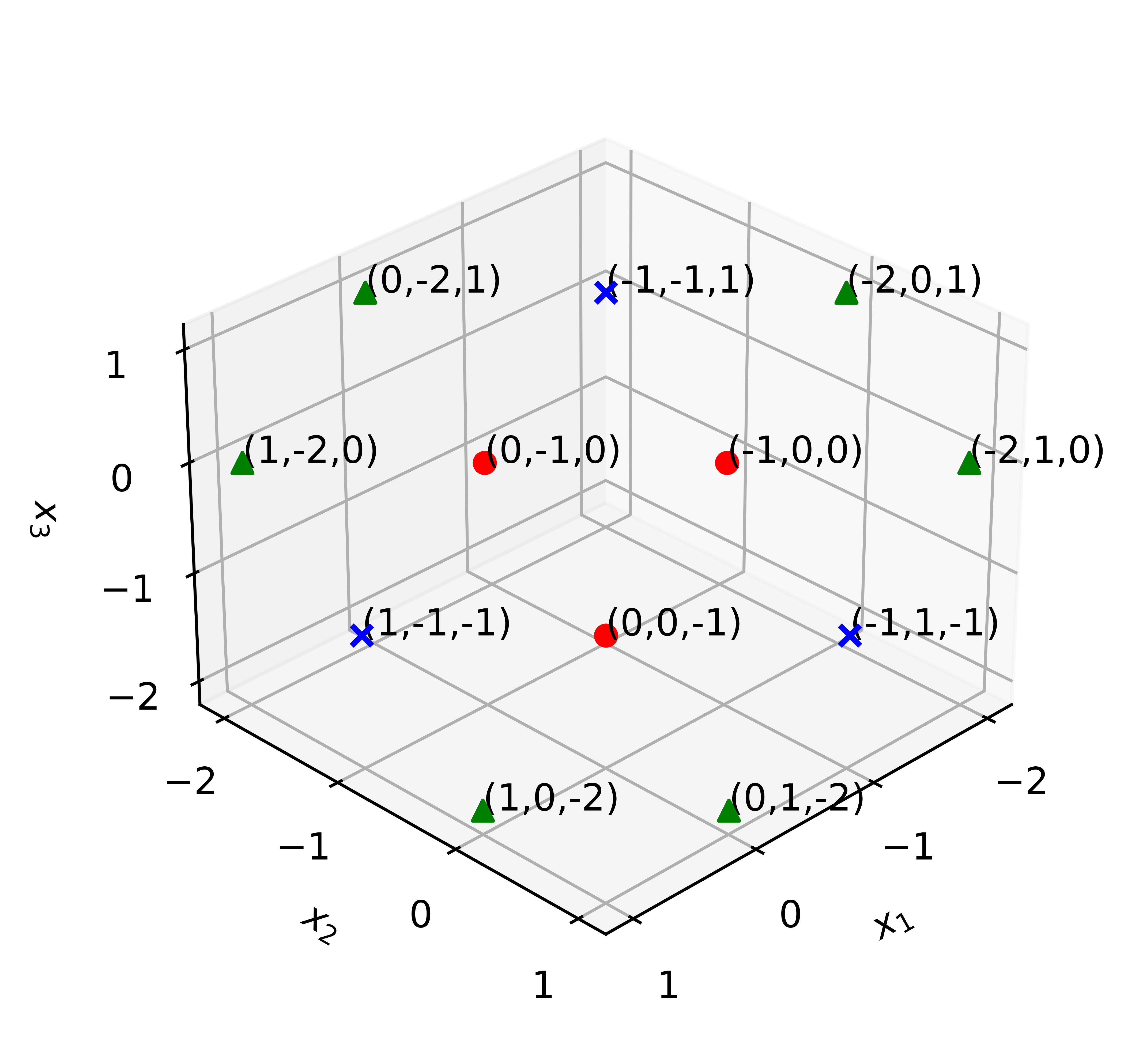}
		\vspace{-.2in}
	\end{center}
	\caption{Plot of the possible type I (red circles), type II (blue crosses) and type III (green triangles) weight vectors of $\mathcal{W}$ in the plane spanned by the first three coordinates.}
	\label{diagram}
\end{figure}

\begin{lem}\label{11}
	There are no superpotentials such that $(a,b)=(1,1)$.
\end{lem}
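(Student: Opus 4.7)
The plan is to adapt the strategy of Lemma \ref{20}, working with the two null vertices $\textbf{a}\in\{s>-1\}$ and $\textbf{c}\in\{s<-1\}$, which by Lemma \ref{CinP} lie in $P$. The first step is to show that $\textbf{a}$ and $\textbf{c}$ share an edge of $\mathrm{conv}(\mathcal{C})$. For any vertex $\textbf{g}$ of $\mathrm{conv}(\mathcal{C})$ in $\{s=-1\}$ adjacent to $\textbf{c}$ and for the closest $\textbf{x}\in\mathcal{C}\setminus\{\textbf{c}\}$ on $\overline{\textbf{c}\textbf{g}}$, the closest-point argument with Lemma \ref{lemma12} forces $J(\textbf{c},\textbf{x})=0$ or $\textbf{c}+\textbf{x}\in\textbf{d}+\tilde{\mathcal{W}}$. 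Along this edge $s(\textbf{x})\in[s(\textbf{c}),-1]$, so the second alternative would require $s(\textbf{x})=-s(\textbf{c})>1$ or $s(\textbf{x})=-2-s(\textbf{c})>-1$, both impossible. Hence $J(\textbf{c},\textbf{x})=0$, and by affineness of $J(\textbf{c},\cdot)$ combined with $J(\textbf{c},\textbf{c})=0$ one gets $J(\textbf{c},\textbf{g})=0$. Since $J(\textbf{c},\cdot)$ does not vanish identically on $\{s=-1\}\cap P$ (for example $J(\textbf{c},\frac{1}{2}\textbf{d})=1/4$), the intersection $\ker J(\textbf{c},\cdot)\cap\{s=-1\}\cap P$ is $(r-2)$-dimensional. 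If every neighbour of $\textbf{c}$ lay there, the tangent cone at $\textbf{c}$ in $\mathrm{conv}(\mathcal{C})$ would have dimension at most $r-1$, contradicting $r$-dimensionality. So $\textbf{c}$ is adjacent to $\textbf{a}$.

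Next I apply the closest-point argument to $\overline{\textbf{a}\textbf{c}}$: for the closest $\textbf{x}\in\mathcal{C}\setminus\{\textbf{a}\}$ on this edge, the alternative $J(\textbf{a},\textbf{x})=0$, extended by affineness of $J(\textbf{a},\cdot)$, would make the whole edge $J$-null and force $\textbf{a}$ and $\textbf{c}$ to be proportional; matching $(r+1)$-th coordinates in $P$ gives $\textbf{a}=\textbf{c}$, a contradiction. Hence $\textbf{a}+\textbf{x}\in\textbf{d}+\tilde{\mathcal{W}}$. The cleanest sub-case is $\textbf{x}=\textbf{c}$ with $\textbf{a}+\textbf{c}=\textbf{d}$: writing $\textbf{a}=\frac{1}{2}(\textbf{d}+\textbf{v})$ gives null vertices $\frac{1}{2}(\textbf{d}\pm\textbf{v})$, the mirror image of the Lemma \ref{20} configuration but with $s(\textbf{v})>1$ strict in place of $|s(\textbf{v})|<1$. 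The sign equations $\sum v_ix_i/d_i=\pm 1$ from $J(\textbf{a},\textbf{g})=0$ and $J(\textbf{c},\textbf{g})=0$ remain incompatible, so no vertex in $\{s=-1\}$ is adjacent to both; the dimension-forcing argument used in Step 1 (and Lemma \ref{bothsidesP}) makes each such vertex adjacent to exactly one of $\textbf{a},\textbf{c}$, recovering (\ref{parallelplanes}) and (\ref{vnull}). The type-by-type analysis of Lemma \ref{20} then applies almost verbatim: each contradiction there ultimately forces $|s(\textbf{v})|\in\{0,1\}$, incompatible with $s(\textbf{v})>1$ (in fact the delicate case $\textbf{v}=\pm(1^1,-1^2)$ with $s(\textbf{v})=0$ that required further work in Lemma \ref{20} is here ruled out immediately).

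The main obstacle I expect is the remaining case $\textbf{x}=\textbf{c}$ with $\textbf{a}+\textbf{c}=\textbf{d}+\textbf{w}$, together with the intermediate-point case $\textbf{x}\neq\textbf{c}$. In the former the midpoint $\textbf{f}=\frac{1}{2}(\textbf{d}+\textbf{w})$ lies in $\{s=-1\}$ on the edge $\overline{\textbf{a}\textbf{c}}$: the $\textbf{c}$-side constraints remain clean (since $s(\textbf{c})<-1$ still excludes $\textbf{c}+\textbf{g}\in\textbf{d}+\tilde{\mathcal{W}}$), but the $\textbf{a}$-side picks up distinguished adjacent vertices precisely when $s(\textbf{a})\in\{0,1\}$, so the clean sign dichotomy of sub-case i is lost. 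I would track these exceptional adjacencies, combine the partial linear constraints on $\mathrm{conv}(\mathcal{W})$ coming from both $\textbf{a}$ and $\textbf{c}$, and use the type analysis together with the adjacency structure around $\textbf{f}$, broadly in the spirit of the closing paragraphs of Lemma \ref{CinP} where analogous edge-midpoint configurations were eliminated, to reach a contradiction. The intermediate-point case should be similar: $\textbf{a}+\textbf{x}\in\textbf{d}+\tilde{\mathcal{W}}$ places $\textbf{x}$ at one of finitely many prescribed parameters on the edge, and each yields constraints amenable to the same analysis.
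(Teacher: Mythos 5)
Your opening step is fine and matches the paper's: with only one null vertex on each side of $\{s=-1\}$, the kernel/dimension argument forces $\textbf{a}$ and $\textbf{c}$ to be adjacent, and the closest-point argument on $\overline{\textbf{a}\textbf{c}}$ forces $\textbf{a}+\textbf{x}\in\textbf{d}+\tilde{\mathcal{W}}$ for the closest point $\textbf{x}$. But the case you defer at the end --- $\textbf{a}+\textbf{x}\in\textbf{d}+\mathcal{W}$, together with the possibility $\textbf{x}\neq\textbf{c}$ --- is not a loose end: it is where essentially the entire content of the lemma lies, and ``I would track these exceptional adjacencies \dots to reach a contradiction'' is a plan, not an argument. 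In the paper this case takes up almost the whole proof: one shows that for every vertex $\textbf{w}'$ of $\mathrm{conv}(\mathcal{W})$ the point $\frac{1}{2}(\textbf{d}+\textbf{w}')$ is adjacent to both off-hyperplane vertices, so that $J(\textbf{c}_\pm,\textbf{d}+\textbf{w}')=0$, and summing gives $J(\textbf{d}+\textbf{w}_0,\textbf{d}+\textbf{w}')=0$, i.e.\ $\sum_i w_{0i}w'_i/d_i=1$ (equation (\ref{w0product})), with $\textbf{w}_0=\textbf{c}_++\textbf{c}_--\textbf{d}\in\mathcal{W}$. The resulting statement --- all of $\mathrm{conv}(\mathcal{W})$ lies on a single affine hyperplane determined by a weight vector $\textbf{w}_0$ --- is then eliminated by a separate argument for each type of $\textbf{w}_0$: type I forces $r=2$, $d_2=1$ and then contradicts the null condition; type II is killed via the forced vertices $(-2^2,1^3)$ and $(1^2,-2^3)$; type III requires the longest analysis. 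None of this is routine, and it is not ``in the spirit of'' the edge--midpoint eliminations at the end of Lemma \ref{CinP}; your sketch offers no substitute for it, so as it stands you have proved only a fragment of the lemma.

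There are also gaps in the sub-case you do treat, $\textbf{a}+\textbf{c}=\textbf{d}$. To quote (\ref{parallelplanes}) and (\ref{vnull}) for all vertices of $\mathrm{conv}(\mathcal{W})$ you need those points to be vertices of $\mathrm{conv}(\mathcal{C})$, and you need to exclude null vertices lying in $\{s=-1\}$ and elements of $\mathcal{C}$ interior to the edges out of $\textbf{a}$; in Lemma \ref{20} these steps are done explicitly and are easy because the polytope lies in $\{s\geq-1\}$ and its off-hyperplane vertices have $|s|<1$, whereas here $\mathrm{conv}(\mathcal{C})$ crosses $\{s=-1\}$ (its slice contains $\overline{\textbf{a}\textbf{c}}\cap\{s=-1\}$), and with $s(\textbf{a})>1$ a null vertex $\textbf{n}\in\{s=-1\}$ could pair with an interior point of an edge to $\textbf{a}$ to land in $\textbf{d}+\tilde{\mathcal{W}}$, so the quick exclusions do not transfer verbatim. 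Two smaller inaccuracies: the contradictions in Lemma \ref{20} are mostly dimension-count contradictions rather than statements that $|s(\textbf{v})|\in\{0,1\}$ (this actually helps you, since it is why that analysis ports over at all), and in the hard case the exceptional $\textbf{a}$-side adjacencies occur for the whole range $0<s(\textbf{a})\leq 1$, not ``precisely when $s(\textbf{a})\in\{0,1\}$''.
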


\begin{proof}
	Denote the vertices in $\{s>-1\}$ and $\{s<-1\}$ by $\textbf{c}_+$ and $\textbf{c}_-$ respectively. Suppose that $\textbf{c}_+,\textbf{c}_-$ are not adjacent. Then $\textbf{c}_+$ is adjacent to an $(r-1)$-dimensional set of non-null vertices in $\{s=-1\}$. But $\textbf{c}_+\in\ker J(\textbf{c}_+,\cdot)$, so $P\subset \ker J(\textbf{c}_+,\cdot)$, which is impossible. Thus $\textbf{c}_+,\textbf{c}_-$ are adjacent. Using the closest point argument, we conclude that $\textbf{c}_++\textbf{c}_-=\textbf{d}+\textbf{w}_0$ for some $\textbf{w}_0\in\tilde{\mathcal{W}}$.
	
	Consider a non-null vertex $\frac{1}{2}(\textbf{d}+\textbf{w})$ adjacent to both $\textbf{c}_\pm$. Then $J(\textbf{d}+\textbf{w},\textbf{c}_\pm)=0$, which add up to $J(\textbf{d}+\textbf{w},\textbf{d}+\textbf{w}_0)=0$. Therefore $\textbf{w}_0\neq \textbf{0}$, and thus $\textbf{c}_+\in\{s>0\}$ and $\textbf{c}_-\in \{s<-2\}$.
	
	For each vertex $\textbf{w}'$ of $\mathrm{conv}(\mathcal{W})$,  $\frac{1}{2}(\textbf{d}+\textbf{w}')$ is adjacent to both $\textbf{c}_\pm$, so we have by the closest point argument that $J(\textbf{c}_\pm, \textbf{d}+\textbf{w}')=0$ and therefore
	\begin{equation}\label{w0product}
		1=\sum_{i=1}^r \frac{w_{0i}w'_i}{d_i}.
	\end{equation}

	Suppose that $\textbf{w}_0$ is of type I, say $\textbf{w}_0=(-1^1)$. Then (\ref{w0product}) becomes $w'_1 = -d_1$. Consequently, $d_1=2$, $w'_1=-2$ and $\textbf{w}'=(-2^1,1^i)$. Since $\dim \mathrm{conv}(\mathcal{W})=r-1$, we must have $\textbf{w}'_i = (-2^1,1^i)\in\mathcal{W}$ for each $i=2,\dots,r$. If $r\geq 3$, then $\textbf{w}'_2,\textbf{w}'_3$ are adjacent without elements of $\mathcal{W}$ on $\overline{\textbf{w}'_2\textbf{w}'_3}$, so that $J(\textbf{d}+\textbf{w}'_2,\textbf{d}+\textbf{w}'_3)=0$, and thus $d_1=4$, a contradiction. Therefore $r=2$. Since $\textbf{w}_0=(-1,0)$ and $\textbf{w}'=(-2,1)$ are vertices of $\mathrm{conv}(\mathcal{W})$, we have $(0,-1)\notin\mathcal{W}$, and thus $d_2=1$. However, we have that $\textbf{c}_+$ is null and $\textbf{c}_+\in\{s\leq -2\}$, which is a contradiction since it can be explicitly checked that $\left\{\frac{x_1^2}{2} + x_2^2 =1\right\} \cap \{x_1+x_2\leq -2\}=\emptyset$.
	
	Suppose instead that $\textbf{w}_0$ is of type II, say $\textbf{w}_0=(1^1,-1^2,-1^3)$. Then (\ref{w0product}) becomes
	\begin{equation}\label{w0tII}
		1=\frac{w'_1}{d_1} - \frac{w'_2}{d_2} - \frac{w'_3}{d_3}.
	\end{equation}
	We must have $(-1^1,1^2,-1^3),(-1^1,-1^2,1^3)\in\mathcal{W}$, but they cannot be vertices, as then (\ref{w0product}) would become $1=-\frac{1}{d_1}\pm \frac{1}{d_2}\mp \frac{1}{d_3}$, which has no solutions. Hence, by Lemma 4.1(vii) of \cite{DancerWang2005}, we must have (in particular) the vertices $\textbf{w}_1=(-2^2,1^3)$ and $\textbf{w}_2=(1^2,-2^3)$. Both $J(\textbf{d}+\textbf{w}_1,\textbf{d}+\textbf{w}_0)=0$ and $J(\textbf{d}+\textbf{w}_2,\textbf{d}+\textbf{w}_0)=0$ cannot be simultaneously satisfied for any $\textbf{d}$, as can be checked explicitly using (\ref{w0tII}). Therefore $\textbf{w}_0$ cannot be of type II.
	
	Finally, suppose that $\textbf{w}_0$ is of type III, say $\textbf{w}_0=(1^1,-2^2)$. Then (\ref{w0product}) becomes
	\begin{equation}\label{w0tIII}
		1=\frac{w'_1}{d_1} -\frac{2w'_2}{d_2}.
	\end{equation}
	One checks that for any $i\geq 3$, $\textbf{w}'=(-1^1,-1^2,1^i)$ cannot be a vertex adjacent to $\textbf{w}_0$. Suppose that $(-1^1,-1^2,1^i)\in\mathcal{W}$. Then we must have that $(-2^2,1^i)$ is adjacent to $\textbf{w}_0$, and hence $d_2=4$. The condition (\ref{w0tIII}) then becomes
	\begin{equation*}
		1=\frac{w'_1}{d_1}-\frac{w'_2}{2}.
	\end{equation*}
	On the other side, we first suppose that $(1^1,-1^2,-1^i)$ is a vertex. Then $d_1=2$, in which case all vertices $\textbf{w}'$ of $\mathrm{conv}(\mathcal{W})$, with the possible exception of $\textbf{w}_0$, must satisfy $\frac{w'_1}{2} - \frac{w'_2}{2}=1$. However, this is impossible, as $(-1^2)\in\{\frac{w'_1}{2} - \frac{w'_2}{2}<1\}$ but $\textbf{w}_0\in\{\frac{w'_1}{2} - \frac{w'_2}{2}>1\}$. The only remaining possibility is that $(1^1,-2^i)$ is a vertex, so that $d_1=1$ by (\ref{w0tIII}). Similarly, $(-1^2)\in \{w'_1 - \frac{w'_2}{2}>1\}$ and $(1^1,-2^2)\in\{w'_1-\frac{w'_2}{2}<1\}$, which is a contradiction. Therefore there are no type II vectors in $\mathcal{W}$ with nonzero entries in the first two positions. 
	
	Consider the plane spanned by the first 3 coordinates (see Figure \ref{diagram}). There are no type II vectors, and furthermore (\ref{w0tIII}) implies that $(-2^1,1^3),(-2^1,1^2)\notin\mathcal{W}$. Since $(-1^1)$ cannot be a vertex, we have $d_1=1$. Let $\textbf{c}_\pm=\frac{1}{2}(\textbf{d}+\textbf{v}_\pm)$. Since $\textbf{v}_++\textbf{v}_-=2\textbf{w}_0=(2^1,-4^2)$, we must in particular have $v_{+,1}+v_{-,1}=2$, but also since both $\frac{1}{2}(\textbf{d}+\textbf{v}_\pm)$ are null and $d_1=1$, we must have $v_{\pm,1}^2<1$, which is a contradiction. Thus $(a,b)=(1,1)$ is impossible.
\end{proof}

\begin{lem}\label{10}
	If $(a,b)=(1,0)$, then the only set of weight vectors admitting a superpotential is the Bérard Bergery--Calabi ansatz.
\end{lem}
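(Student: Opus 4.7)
The plan is to exploit the pyramid structure of $\mathrm{conv}(\mathcal{C})$ to extract from each edge-adjacency at $\textbf{c}_+$ an algebraic constraint on the direction $\textbf{v}$, where $\textbf{c}_+=\frac{1}{2}(\textbf{d}+\textbf{v})$, and then to perform a case analysis on the type of a distinguished vertex of $\mathrm{conv}(\mathcal{W})$.

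Since $\textbf{c}_+$ is the only vertex of $\mathrm{conv}(\mathcal{C})$ outside $\{s=-1\}$, Lemmas \ref{lemma12}(i) and \ref{notd/2} give $\mathrm{conv}(\mathcal{C})\cap\{s=-1\}=\mathrm{conv}(\frac{1}{2}(\textbf{d}+\mathcal{W}))$, so $\mathrm{conv}(\mathcal{C})$ is a pyramid with apex $\textbf{c}_+$ whose base is this convex hull. In particular, $\textbf{c}_+$ shares an edge of $\mathrm{conv}(\mathcal{C})$ with $\frac{1}{2}(\textbf{d}+\textbf{w}')$ for each vertex $\textbf{w}'$ of $\mathrm{conv}(\mathcal{W})$, and the null condition for $\textbf{c}_+$ reads $\sum_{i=1}^r v_i^2/d_i=1$. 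Applying the closest point argument to each such edge and tracking $s$-values, one checks that the only way to realize $\textbf{c}_++\textbf{x}\in\textbf{d}+\tilde{\mathcal{W}}$ is $\textbf{c}_++\textbf{x}=\textbf{d}$, which pins $\textbf{x}=\frac{1}{2}(\textbf{d}-\textbf{v})$ and forces $\textbf{v}$ to be a negative scalar multiple of $\textbf{w}'$. Denote this alternative (ii), and the remaining case of Lemma \ref{lemma12}(ii) by (i), namely $J(\textbf{c}_+,\frac{1}{2}(\textbf{d}+\textbf{w}'))=0$, equivalently $\sum v_iw'_i/d_i=1$.

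Two distinct vertices $\textbf{w}'$ cannot both satisfy (ii), since $\textbf{v}$ cannot be proportional to two distinct elements of $\{s=-1\}$, and if (i) holds for every vertex then linearity and the $(r-1)$-dimensional span hypothesis force $v_i=-d_i$, giving $\sum v_i^2/d_i=n$ and contradicting the null condition for $n\geq 2$. Hence there is a unique distinguished vertex $\textbf{w}'_0$ with $\textbf{v}=\mu\textbf{w}'_0$ and $\mu\in[-1,0)$, subject to $\sum (w'_{0,i})^2/d_i=1/\mu^2$ and $\sum w'_{0,i}w'_i/d_i=1/\mu$ for every other vertex $\textbf{w}'$ of $\mathrm{conv}(\mathcal{W})$. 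A case analysis on $\textbf{w}'_0$ then closes the proof. Type I $(-1^l)$ forces $d_l=1$, which is incompatible with $(-1^l)\in\mathcal{W}$ under our standing assumptions (via Lemma 4.1 of \cite{DancerWang2005}). Type II $(1^l,-1^m,-1^n)$ forces $d_l=d_m=d_n=1$ after applying the orthogonality relation to the type I vertices $(-1^l),(-1^m),(-1^n)$, and then a diagram-chasing argument via Proposition 4.1 of \cite{DancerWang2005}, in the spirit of Lemma \ref{20}, yields an inconsistency. This leaves $\textbf{w}'_0=(1^l,-2^m)$ of type III with $d_l=1$, identifying the fibre direction of the Bérard Bergery--Calabi ansatz; applying the orthogonality relation to every other vertex then constrains $\mathcal{W}$ to consist only of vectors $(-1^j)$ and $(1^l,-2^j)$, which is precisely the Bérard Bergery--Calabi structure.

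The hard part will be the elimination of the type II case: the orthogonality relation alone does not exclude $(1^l,-1^m,-1^n)$ with $d_l=d_m=d_n=1$, and the induced further weight vectors required by Proposition 4.1 of \cite{DancerWang2005} must be threaded through the same orthogonality relation in a manner analogous to, though lighter than, the chase performed in Lemma \ref{20}.
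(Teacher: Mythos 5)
Your argument collapses at the first structural claim. From Lemma \ref{lemma12}(i) and Lemma \ref{notd/2} you cannot conclude that $\mathrm{conv}(\mathcal{C})\cap\{s=-1\}=\mathrm{conv}(\frac{1}{2}(\mathbf{d}+\mathcal{W}))$, nor that $\mathrm{conv}(\mathcal{C})$ is a pyramid whose base vertices are the points $\frac{1}{2}(\mathbf{d}+\mathbf{w}')$ with $\mathbf{w}'$ a vertex of $\mathrm{conv}(\mathcal{W})$: Lemma \ref{lemma12}(ii) allows \emph{null} vertices of $\mathrm{conv}(\mathcal{C})$ anywhere in $\{s=-1\}$, and Lemma \ref{lemma12}(i) is only a convex-hull inclusion, so a weight vector may be realized by a cross term without $\frac{1}{2}(\mathbf{d}+\mathbf{w}')$ belonging to $\mathcal{C}$ at all. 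The Bérard Bergery--Calabi superpotential itself is the counterexample: there $\mathcal{C}=\frac{1}{2}(\mathbf{d}+\{(-1^1),(1^1,-2^2),\dots,(1^1,-2^r),(1^1)\})$, so $\mathcal{C}$ contains the extra null vertex $\frac{1}{2}(\mathbf{d}+(-1^1))\in\{s=-1\}$ with $(-1^1)\notin\mathcal{W}$, while the type I weights $(-1^j)$, $j\geq 2$, are vertices of $\mathrm{conv}(\mathcal{W})$ whose associated points $\frac{1}{2}(\mathbf{d}+(-1^j))$ are not elements of $\mathcal{C}$ (the terms $A_{(-1^j)}$ arise from the cross sum $(-1^1)+(1^1,-2^j)$). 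Consequently your dichotomy ``for every vertex $\mathbf{w}'$ of $\mathrm{conv}(\mathcal{W})$, either $\sum_i v_iw'_i/d_i=1$ or $\mathbf{v}\parallel\mathbf{w}'$'' is not forced by any edge of $\mathrm{conv}(\mathcal{C})$, and in fact it fails precisely in the target configuration: with $\mathbf{v}=(1^1)$, $d_1=1$ and $\mathbf{w}'=(-1^2)$ one has $\sum_i v_iw'_i/d_i=0$ and no proportionality. So, taken literally, your argument would exclude the Bérard Bergery--Calabi ansatz rather than produce it.

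What the missing structure is: the paper first shows (via $\ker J(\mathbf{d}-\mathbf{v},\cdot)$) that besides the apex $\frac{1}{2}(\mathbf{d}-\mathbf{v})$ there must be a \emph{second} null vertex $\frac{1}{2}(\mathbf{d}+\mathbf{v})$ lying in $\{s=-1\}$, and then that every non-null vertex $\frac{1}{2}(\mathbf{d}+\mathbf{w})$ adjacent to it satisfies $\frac{1}{2}(\mathbf{v}+\mathbf{w})\in\mathcal{W}$ (after ruling out the failure of the unique sum condition by an explicit computation). It is this ``midpoint lies in $\mathcal{W}$'' mechanism, producing new weight vectors $\mathbf{w}'_i=\frac{1}{2}(\mathbf{v}+\mathbf{w}_i)$ together with the relations $J(\mathbf{d}-\mathbf{v},\mathbf{d}+\mathbf{w}_i)=0$, that grounds the type-by-type elimination and, via Lemma 4.5 of \cite{DancerWang2005}, forces $d_1=1$ and the Bérard Bergery--Calabi configuration. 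None of this appears in your proposal, and the type II elimination you defer as ``the hard part'' is exactly the kind of analysis that cannot be carried out from your (unavailable) orthogonality relation alone.
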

\begin{proof}
	First, we claim that there are exactly two null vertices, which are of the form $\frac{1}{2}(\textbf{d}\pm \textbf{v})$.
	Let $\frac{1}{2}(\textbf{d}-\textbf{v})$ be the (null) vertex in $\{s>-1\}$. Arguing using $\ker J(\textbf{d}-\textbf{v},\cdot)$, we see that there must be at least one null vertex in $\{s=-1\}$, which by the closest point argument must be $\frac{1}{2}(\textbf{d}+\textbf{v})$.
	
	Let $\frac{1}{2}(\textbf{d}+\textbf{w})$ be a (non-null) vertex adjacent to $\frac{1}{2}(\textbf{d}+\textbf{v})$. It is also adjacent to $\frac{1}{2}(\textbf{d}-\textbf{v})$, so that $J(\textbf{d}-\textbf{v},\textbf{d}+\textbf{w})=0$. This implies that $J(\textbf{d}+\textbf{v},\textbf{d}+\textbf{w})\neq 0$, so either $\frac{1}{2}(\textbf{d}+\textbf{w})$, $\frac{1}{2}(\textbf{d}+\textbf{v})$ do not satisfy the unique sum condition, or $\frac{1}{2}(\textbf{v}+\textbf{w})\in\mathcal{W}$.
	
	Suppose the unique sum condition is not satisfied. Then, we consider the line segment $\overline{\textbf{v}\textbf{w}}$ and define on it the points $\textbf{x}_v,\textbf{x}_w$ closest to but distinct from $\textbf{v},\textbf{w}$ respectively. The midpoint of $\textbf{v}$, $\textbf{x}_v$, the midpoint of $\textbf{x}_w,\textbf{w}$ and $\textbf{w}$ are all in $\mathcal{W}$. If $\textbf{x}_v\neq \textbf{x}_w$, then this is impossible since the distance between the two midpoints is more than twice the distance between one of them and $\textbf{w}$. Thus $\textbf{x}_v=\textbf{x}_w$. Consequently the only possibility, up to permutation of indices, is that $\textbf{w}=(-2^1,1^2)$, $\textbf{x}_v=\textbf{x}_w=(-1^2)$ and $\textbf{v}=(2^1,-3^2)$. Then the condition for $\frac{1}{2}(\textbf{d}\pm \textbf{v})$ to be null is $1=\frac{4}{d_1} + \frac{9}{d_2}$ and the condition $J(\textbf{d}-\textbf{v}, \textbf{d}+\textbf{w})=0$ is $1=\frac{4}{d_1} + \frac{3}{d_2}$.	The two conditions are evidently contradictory, so the unique sum condition is satisfied, and $\frac{1}{2}(\textbf{v}+\textbf{w})\in\mathcal{W}$.
	
	Since $\mathrm{conv}(\mathcal{W})$ has nonzero $(r-1)$-dimensional measure, there are at least $(r-1)$ vertices in $\{s=-1\}$ adjacent to $\frac{1}{2}(\textbf{d}+\textbf{v})$, which we denote by $\frac{1}{2}(\textbf{d}+\textbf{w}_1),\dots,\frac{1}{2}(\textbf{d}+\textbf{w}_{r-1})$. For each $i$, we would then have that $\textbf{w}'_i \equiv \frac{1}{2}(\textbf{v}+\textbf{w}_i)\in\mathcal{W}$. Thus
	\begin{equation*}
		2\textbf{w}'_1-\textbf{w}_1=\dots=2\textbf{w}'_{r-1}-\textbf{w}_{r-1}=\textbf{v},
	\end{equation*}
	and
	\begin{equation}\label{nullroof}
		J(\textbf{d}-\textbf{v},\textbf{d}+\textbf{w}_i)=0.
	\end{equation}
	Thus the $\{\frac{1}{2}(\textbf{d}+\textbf{w}_i)\}$ and the $\{\frac{1}{2}(\textbf{d}+\textbf{w}'_i)\}$ lie on parallel affine hyperplanes of $\{s=-1\}$.

	Suppose that one of the $\textbf{w}_i$'s is a type I vector, say $\textbf{w}_1=(-1^1)$. Then $J(\textbf{d}-\textbf{v},\textbf{d}+\textbf{w}_1)=0$, so $1=-\frac{-v_1}{d_1}$, $v_1=d_1$. Since $\textbf{v}$ is null, we have $d_1=1$, a contradiction since $(-1^1)\in\mathcal{W}$.
	
	Suppose that one of them is a type II vector, say $\textbf{w}_1=(1^1,-1^2,-1^3)$. Then (\ref{nullroof}) is
	\begin{equation*}
		-\frac{v_1}{d_1} + \frac{v_2}{d_2} + \frac{v_3}{d_3}=1.
	\end{equation*}
	The null condition for $\frac{1}{2}(\textbf{d}\pm\textbf{v})$ gives
	\begin{equation*}
		\frac{v_1^2}{d_1} + \frac{v_2^2}{d_2} + \frac{v_3^2}{d_3}\leq 1.
	\end{equation*}
	Note that the only way that these conditions can hold for integers $v_1,v_2,v_3$ is if they satisfy $|v_1|,|v_2|,|v_3|\leq 1$.
	Suppose that two of $v_1,v_2,v_3$ are zero, say $v_2$ and $v_3$. Then $-v_1=d_1\geq 2$ (Proposition 4.1(iv) of \cite{DancerWang2005}), so $v_1^2/d_1>1$, a contradiction.
	Suppose that one of $v_1,v_2,v_3$ is zero, say $v_3$. Then
	\begin{equation*}
		-\frac{v_1}{d_1} + \frac{v_2}{d_2} = 1, \qquad \frac{v_1^2}{d_1} + \frac{v_2^2}{d_2}\leq 1.
	\end{equation*}
	Consequently, $d_1=d_2=2$, $v_1=-1$, $v_2=1$, and the right-hand side inequality is an equality, so $\textbf{v}=(-1^1,1^1)$, contradicting $\sum_{i=1}^r v_i = -1$. Hence, all three of $v_1,v_2,v_3$ are equal to $\pm 1$. In fact, since $d_1,d_2,d_3\geq 2$, we must have $v_1=-1,v_2=1,v_3=1$, and the null condition inequality is an equality, implying that $\textbf{v}=(-1^1,1^2,1^3)$, contradicting $\sum_{i=1}^r v_i = -1$.
	
	Hence, by elimination, each $\textbf{w}_i$ is a type III vector.
	Suppose without loss of generality that $\textbf{w}_1=(1^1,-2^2)$. Then
	\begin{equation}\label{witIII}
		-\frac{v_1}{d_1} + \frac{2v_2}{d_2}=1, \qquad \frac{v_1^2}{d_1} + \frac{v_2^2}{d_2}\leq 1.
	\end{equation}
	We observe immediately that $v_1\leq 0$.

	Suppose for now that $r\geq 3$,  and consider the equality 
	\begin{equation*}
		\textbf{w}_i-\textbf{w}_j=2(\textbf{w}'_i-\textbf{w}'_j).
	\end{equation*}
	
	The only way in which the difference of two type III vectors can have only even entries is if the $(-1)$'s are in the same position. Since there must be $r-1$ such vectors, the only possibility up to permutation is $\textbf{w}_i = (1^1,-2^{i+1})$, $i=1,\dots,r-1$. All the $\textbf{w}_i$'s lie on the hyperplane $\{x_1=1\}\cap \{\sum_{i=1}^r x_i = -1\}$. Since $2\textbf{w}'_i-\textbf{w}_i=\textbf{v}$ for each $i$, we must have $(\textbf{w}'_1)_1=(\textbf{w}'_2)_1=\dots =(\textbf{w}'_{r-1})_1$.
	
	Lemma 4.5 of \cite{DancerWang2005} yields the following dichotomy.
	
	First, $d_1=1$. Then $v_1\in \{-1,0\}$. Since $v_1$ must differ from $-(\textbf{w}_i)_1=-1$ by an even integer, we have $v_1=-1$. Since $\frac{1}{2}(\textbf{d}+\textbf{v})$ is null, we have $\textbf{v}=(-1,0,\dots,0)$ and we have exactly Example 18 of \cite{BetancourtDancerWang2016}, on the Bérard Bergery--Calabi ansatz.
	
	Second, $d_1>1$ and $(1^1,-1^i,-1^j)\in\mathcal{W}$ for each $i,j\in\{2,\dots,r\}$, $i\neq j$. Also, we have $i\notin S_1$ for each $i=2,\dots,r$, using the notation of \cite{DancerWang2005}. Since by definition $i\notin S_0$, we must have $i\in S_{\geq 2}$ for each $i=2,\dots,r$. This implies, by Proposition 4.2 of \cite{DancerWang2005}, that $d_2=d_3=\dots=d_r=4$. We know from above that $v_1\leq 0$; in fact $v_1\leq -1$. Then, for each $k=2,\dots,r$,
	\begin{equation*}
		-\frac{v_1}{d_1}+\frac{v_k}{2}=1, \qquad \frac{v_1^2}{d_1}+\sum_{i=2}^r\frac{v_i^2}{4} = 1.
	\end{equation*}
	In particular, we must have $|v_1/d_1|\leq 1$, with equality if and only if $v_1=-d_1$ and $\textbf{v}=(-d_1,0,\dots,0)$. However, we have assumed that $d_1>1$, so equality is impossible. Therefore $|v_1/d_1|<1$ and $v_k/2>0$ for each $k$. The only possibility is $v_k=1$ for each $k$, with $v_1/d_1=-1/2$. The average of $v_1$ and $1$ must be a component of a vector in $\mathcal{W}$, so in fact $v_1\in \{-1,-3,-5\}$. The second equation implies that in fact $v_1=-1$, $d_1=2$, so that $\sum_{i=1}^r v_i \geq 0$, which is a contradiction.
	
	In the case $r=2$, there are only three possibilities for $\textbf{w}'_1$. First, if $\textbf{w}'_1=(0,-1)$, then $\textbf{v}=(1,0)$ and we again have the Bérard Bergery--Calabi ansatz. Otherwise, either $\textbf{w}_1'=(0,-1)$ and $\textbf{v}=(-3,2)$ or $\textbf{w}_1'=(-2,1)$ and $\textbf{v}=(-5,4)$; in both of these cases, the null condition for $\frac{1}{2}(\textbf{d}+\textbf{v})$ contradicts (\ref{witIII}). This concludes the classification for the case $(a,b)=(1,0)$.
\end{proof}

\begin{lem}\label{newsuperpotential}
	If $(a,b)=(2,1)$, then two sets of weight vectors $\mathcal{W}$ admit superpotentials, namely $\mathcal{W} = \{(-1,0),(0,-1)\}$ and $\mathcal{W}=\{(-1,0,0),(0,-1,0),(-2,0,1),(0,-2,1)\}$.
\end{lem}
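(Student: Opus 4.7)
The plan is to mirror the structure of Lemmas \ref{20} and \ref{11}. Let $\textbf{c}_\pm = \frac{1}{2}(\textbf{d} \pm \textbf{v})$ denote the two null vertices in $\{s>-1\}$, which must sum to $\textbf{d}$ by the argument used in Lemma \ref{abrange}, and let $\textbf{c}_0$ be the unique null vertex in $\{s<-1\}$. By Lemma \ref{CinP} all three lie in $P$. I would first argue that $\textbf{c}_0$ must share an edge with at least one of $\textbf{c}_\pm$. Otherwise, all neighbours of $\textbf{c}_0$ lie in $\frac{1}{2}(\textbf{d}+\mathcal{W})\subset P\cap\{s=-1\}$ and span an $(r-1)$-dimensional subspace of $P$; the closest point argument on each such edge gives $J(\textbf{c}_0,\cdot)\equiv 0$ on this subspace, and combined with $J(\textbf{c}_0,\textbf{c}_0)=0$ this extends to $J(\textbf{c}_0,\cdot)\equiv 0$ on $P$. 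A direct computation as in Lemma \ref{Jdkernel} then forces $\textbf{c}_0=\frac{1}{2}\textbf{d}$, contradicting Lemma \ref{notd/2}.

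Relabelling, I would assume $\textbf{c}_0$ is adjacent to $\textbf{c}_+$. The closest point argument together with the observation that $\textbf{c}_0+\textbf{c}_+=\textbf{d}$ would force $\textbf{c}_0=\textbf{c}_-$ (impossible) yields some $\textbf{w}_+\in\mathcal{W}$ with $\textbf{c}_0+\textbf{c}_+=\textbf{d}+\textbf{w}_+$, i.e.\ $\textbf{c}_0=\textbf{c}_-+\textbf{w}_+$. I would then split into Case A, where $\textbf{c}_0$ is adjacent to $\textbf{c}_-$ as well, and Case B, where it is not. Case B should be ruled out by an argument analogous to Step 1 applied to the link of $\textbf{c}_0$: the null condition together with $J(\textbf{c}_0,\textbf{c}_+)=0$ (from the closest point argument) and $r-1$ further zeros from the non-null neighbours of $\textbf{c}_0$ in $\{s=-1\}$ again make $J(\textbf{c}_0,\cdot)$ vanish on all of $P$, giving the same contradiction.

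In Case A a symmetric argument produces $\textbf{w}_-\in\mathcal{W}$ with $\textbf{c}_0=\textbf{c}_++\textbf{w}_-$, so that $\textbf{v}=\textbf{w}_+-\textbf{w}_-$ and $\textbf{c}_0=\frac{1}{2}(\textbf{d}+\textbf{w}_++\textbf{w}_-)$. Applying Lemma \ref{lemma11} to the null conditions for $\textbf{c}_\pm$ and $\textbf{c}_0$ and taking the sum and difference gives
\begin{equation*}
\sum_{i=1}^r \frac{w_{+,i}^2 + w_{-,i}^2}{d_i} = 1, \qquad \sum_{i=1}^r \frac{w_{+,i}w_{-,i}}{d_i} = 0.
\end{equation*}
I would then enumerate the ordered pair $(\textbf{w}_+,\textbf{w}_-)$ by their types (I, II, III), using integrality of the $d_i$ together with Proposition 4.1 and Theorem 3.5 of \cite{DancerWang2005} to eliminate all but the I--I pairing, where $\textbf{w}_\pm=(-1^{i_\pm})$ with $i_+\neq i_-$ and $d_{i_+}=d_{i_-}=2$. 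Once $\textbf{w}_\pm$ are pinned down, I would classify which additional vectors $\textbf{w}'\in\mathcal{W}$ are permitted by running the closest point argument on the edges from $\frac{1}{2}(\textbf{d}+\textbf{w}')$ to $\textbf{c}_\pm$ and $\textbf{c}_0$. For $r=2$ no additional vectors are compatible, giving the first configuration; for $r\geq 3$ only the Bérard Bergery--Calabi data $r=3$, $d_{i_\pm}=2$, $d_k=1$ with the two extra type III vectors $(-2^{i_\pm},1^k)$ should survive, giving the second configuration.

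To complete the lemma, one must verify that both configurations actually admit superpotentials: the first is a known example from \cite{BetancourtDancerWang2016}, while the second is the new one, explicitly exhibited in Section \ref{examplesection}. The main obstacle is the case-by-case enumeration in Case A, particularly ruling out the mixed pairings (I--II, I--III, II--II, II--III, III--III): several satisfy the diophantine constraints on dimensions alone (for instance $(d_i,d_j,d_k)=(4,4,8)$ in the I--III case) and must be eliminated by carefully combining the orthogonality relation with auxiliary closest point constraints coming from the remaining weight vectors and the structural restrictions of \cite{DancerWang2005}.
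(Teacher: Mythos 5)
Your outline follows the paper's proof in its broad strokes: the same setup with null vertices $\frac{1}{2}(\textbf{d}\pm\textbf{v})$ summing to $\textbf{d}$ plus one vertex $\frac{1}{2}(\textbf{d}+\textbf{u})$ below $\{s=-1\}$, the same pair of relations $\sum_i (w_{+,i}^2+w_{-,i}^2)/d_i=1$ and $\sum_i w_{+,i}w_{-,i}/d_i=0$, and the same two surviving configurations. However, your treatment of the adjacency structure has a genuine flaw. In Case B you invoke $J(\textbf{c}_0,\textbf{c}_+)=0$ ``from the closest point argument'', which is inconsistent with what you derived one step earlier ($\textbf{c}_0+\textbf{c}_+=\textbf{d}+\textbf{w}_+$, so $J(\textbf{c}_0,\textbf{c}_+)$ is in general nonzero); and even ignoring this, the link of $\textbf{c}_0$ only guarantees $r-1$ neighbours in $\{s=-1\}$ spanning an $(r-2)$-dimensional set, so together with the null point $\textbf{c}_0$ itself you only place an $(r-1)$-dimensional set inside $\ker J(\textbf{c}_0,\cdot)\cap P$ -- no contradiction, since that intersection is generically $(r-1)$-dimensional. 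The paper rules this case out by a different, global argument: it introduces the hyperplanes $Q_\pm=\ker J(\textbf{d}\pm\textbf{v},\cdot)\cap\{s=-1\}$ and $Q_u$, produces an $(r-2)$-dimensional set $T\subset Q_+\cap Q_u$ from neighbours of $\textbf{w}_0=\frac{1}{2}(\textbf{u}+\textbf{v})$ in $\mathrm{conv}(\mathcal{W})$ (using $\dim\mathrm{conv}(\mathcal{W})=r-1$), a second set $U\subset Q_-\cap Q_u$ from neighbours of $-\textbf{v}$, and then the parallelism $Q_+\cap Q_-=\emptyset$ to conclude. Moreover, even once adjacency to both $\pm\textbf{v}$ is known, one still needs that the triangle $\textbf{v},-\textbf{v},\textbf{u}$ is a face of $\mathrm{conv}(\mathcal{C})$, equivalently that $\overline{\textbf{w}_+\textbf{w}_-}$ is an edge of $\mathrm{conv}(\mathcal{W})$; this is established by the same hyperplane argument and is used repeatedly afterwards (for instance to exclude $(1^1,-2^2),(1^1,-2^3)$ in the II--II case and to establish the unique sum condition for $(-2^1,1^3),(-2^2,1^3)$ in the I--I case). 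Your plan omits this step entirely.

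The second gap is that the actual core of the lemma is not carried out. You acknowledge that the orthogonality relation plus integrality of the $d_i$ do not by themselves eliminate the mixed pairings; in the paper each such pairing is killed by a neighbour-by-neighbour analysis of $\textbf{w}_+$ in the plane of the relevant coordinates, using the edge condition together with $J(\textbf{d}+\textbf{w}_+,\textbf{d}+\textbf{x})=0$ and $J(\textbf{d}+\textbf{w}_-,\textbf{d}+\textbf{x})=1$ for out-of-face neighbours $\textbf{x}$, plus the structural results of \cite{DancerWang2005} -- this is where the work lies, and it is precisely what your sketch defers. The same goes for the I--I case: showing that the only admissible extra weight vectors are $(-2^{i_+},1^k),(-2^{i_-},1^k)$ with $d_k=1$, $r=3$ (and that at most one out-of-plane neighbour of each $\textbf{w}_\pm$ can occur) requires the face condition and is not a routine check. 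Finally, existence of a superpotential for the new configuration is not automatic from the vertex configuration: one must solve (\ref{superpotentialcases}) for the coefficients $f_{\textbf{c}}$, which forces sign choices and the compatibility condition (\ref{newexrequirement}) on the $A_w$; deferring this to Section \ref{examplesection} is circular, since that section takes the superpotential of Theorem \ref{classification}(\ref{newsup}) -- i.e.\ the content of this lemma -- as its input.
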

\begin{proof}
	Let the two (adjacent) vertices in $\{s>-1\}$ be $\frac{1}{2}(\textbf{d}\pm \textbf{v})$ and let $\frac{1}{2}(\textbf{d}+\textbf{u})$ be the vertex in $\{s<-1\}$. Let $Q_
	\pm = \ker J(\textbf{d}\pm\textbf{v},\cdot)\cap \{s=-1\}$ and $Q_u = \ker J(\textbf{d}+\textbf{u},\cdot)\cap \{ s=-1\}$. Each is a hyperplane of $\{s=-1\}$, and $Q_+\cap Q_- = \emptyset$.
	
	For convenience, where there is no ambiguity, we will sometimes identify a vector $\frac{1}{2}(\textbf{d}+\textbf{x})$ with $\textbf{x}$ and $\frac{1}{2}(\textbf{d}+(\mathbb{R}^r,0))= P$ with $\mathbb{R}^r$. By considering $\ker J(\textbf{d}+\textbf{u},\cdot)$, we note that $\textbf{u}$ must be adjacent to at least one of $\pm \textbf{v}$. Suppose $\textbf{u}$ is adjacent to $\textbf{v}$, but not to $-\textbf{v}$. Then $\frac{1}{2}(\textbf{u}+\textbf{v})\equiv\textbf{w}_0\in\mathrm{conv}(\mathcal{W})$. Since $\mathrm{conv}(\mathcal{W})$ has nonzero $(r-1)$-dimensional measure, $\textbf{w}_0$ has at least $r-1$ neighbours $\textbf{w}_1,\dots,\textbf{w}_{r-1}$ such that $T\equiv \mathrm{conv}(\{\textbf{w}_1,\dots,\textbf{w}_{r-1}\})$ is an $(r-2)$-dimensional subset of $\{s=-1\}$. Since $-\textbf{v}$ is not adjacent to $\textbf{u}$, each $\textbf{w}_i$ is in fact a vertex of $\mathrm{conv}(\mathcal{C})$ and is outside the plane spanned by $\pm \textbf{v}$ and $\textbf{u}$. Therefore $\textbf{w}_i$ cannot be adjacent to $-\textbf{v}$, so we must have that $\textbf{w}_i$, $\textbf{v}$ are adjacent, $\textbf{w}_i\in Q_+$ and $T\subset Q_+$.
	
	Consider the vertex $-\textbf{v}$ of $\mathrm{conv}(\mathcal{C})$, which has at least $r$ neighbours. Since we assume it is not adjacent to $\textbf{u}$, at least $r-1$ of these neighbours are in $\{s=-1\}$. Denote them by $\textbf{x}_1,\dots, \textbf{x}_{r-1}$; their convex hull $U\subset Q_-$ is $(r-2)$-dimensional.	We also have that $T,U\subset Q_u$. Therefore $T\subset Q_+\cap Q_u$ and $U\subset Q_-\cap Q_u$, which is a contradiction since $Q_+\cap Q_-=\emptyset$ implies that at least one of $Q_+\cap Q_u$ and $Q_-\cap Q_u$ has zero $(r-2)$-dimensional measure.
	
	Thus $\textbf{u}$ is adjacent to both $\pm \textbf{v}$, and the closest point argument yields that $\frac{1}{2}(\textbf{u}+\textbf{v})\equiv \textbf{w}_+\in\mathcal{W}$ and $\frac{1}{2}(\textbf{u}-\textbf{v})\equiv \textbf{w}_-\in\mathcal{W}$. Suppose also that the triangle $\textbf{v},-\textbf{v},\textbf{u}$ is not a face of $\mathrm{conv}(\mathcal{W})$. Then we may define $\textbf{w}_1,\dots,\textbf{w}_{r-1}$ spanning an $(r-2)$-dimensional set $T\subset Q_+$ as in the previous case, since we would have $\textbf{w}_-\notin T$. Similarly, we may define $\textbf{x}_1,\dots,\textbf{x}_{r-1}$ spanning an $(r-2)$-dimensional set $U\subset Q_-$, since we would have $\textbf{w}_+\notin U$. By the same reason as above, this is a contradiction.
	
	The remaining case is where the triangle $\textbf{v},-\textbf{v},\textbf{u}$ is a face of $\mathrm{conv}(\mathcal{C})$, or equivalently $\overline{\textbf{w}_+\textbf{w}_-}$ is an edge of $\mathrm{conv}(\mathcal{W})$. We compute 
	\begin{equation}\label{wpm1}
		J(\textbf{d}+\textbf{w}_+,\textbf{d}+\textbf{w}_-)=\frac{1}{4} (J(\textbf{d}+\textbf{u},\textbf{d}-\textbf{v})+J(\textbf{d}+\textbf{u},\textbf{d}+\textbf{v})+J(\textbf{d}+\textbf{v}, \textbf{d}-\textbf{v}))=1.
	\end{equation}
	
	As a vertex of $\mathrm{conv}(\mathcal{W})$, $\textbf{w}_+$ has at least $r-1$ neighbours, out of which at least $r-2$ are not $\textbf{w}_-$ and therefore are also vertices of $\mathrm{conv}(\mathcal{C})$. Denote them by $\textbf{x}_1,\dots,\textbf{x}_{r-2}$. They are all outside the face $\textbf{v},-\textbf{v},\textbf{u}$, so they are all adjacent to $\textbf{v}$. Therefore we have $J(\textbf{d}+\textbf{u},\textbf{d}+\textbf{x}_i)=J(\textbf{d}+\textbf{v},\textbf{d}+\textbf{x}_i)=0$, which together with the relations $\textbf{u}=\textbf{w}_++\textbf{w}_-$ and $\textbf{v}=\textbf{w}_+-\textbf{w}_-$ gives $J(\textbf{d}+\textbf{w}_+,\textbf{d}+\textbf{x}_i)=0$ and $J(\textbf{d}+\textbf{w}_-,\textbf{d}+\textbf{x}_i)=1$.

	We consider the situation case by case, choosing each of $\textbf{w}_\pm$ to be a type I, II or III vector in turn.
	
	First, we claim that if $\textbf{w}_+$ is of type II, say $\textbf{w}_+=(1^1,-1^2,-1^3)$, then $\textbf{w}_-$ can only have nonzero entries in the first $3$ coordinates. This reduces the number of cases we will have to check. Suppose otherwise for sake of contradiction. Then $\textbf{w}_-$ lies outside of the plane $R$ spanned by the first three coordinates, so that each vertex of $\mathrm{conv}(\mathcal{W}\cap R)$ is a vertex of $\mathrm{conv}(\mathcal{C})$. First, notice that $\textbf{w}_+$ cannot be adjacent to either $(-1^1,-1^2,1^3)$ or $(-1^1,1^2,-1^3)$, since we would then have $-\frac{1}{d_1} \pm \frac{1}{d_2} \mp \frac{1}{d_3} = 1$, which is impossible. Also, it cannot be adjacent to either $(-2^2,1^3)$ or $(1^2,-2^3)$, since we would have
	\begin{equation*}
		\frac{2}{d_2} - \frac{1}{d_3} = 1 \quad \mathrm{or} \quad -\frac{1}{d_2} + \frac{2}{d_3} = 1,
	\end{equation*}
	which are impossible since $d_2,d_3\geq 2$. Therefore $(1^1,-2^2)$ and $(1^1,-2^3)$ are both adjacent to $\textbf{w}_+$, which contradicts the assumption that $\textbf{w}_-$ is a vertex of $\mathrm{conv}(\mathcal{W})$.
	
	If both $\textbf{w}_\pm$ are of type I, then without loss of generality $\textbf{w}_+=(-1^1)$ and $\textbf{w}_-=(-1^2)$. Then $\textbf{u}=(-1^1,-1^2)$ and $\textbf{v}=(-1^1,1^2)$, leading to the null condition $\frac{1}{d_1} + \frac{1}{d_2}=1$, implying that $d_1=d_2=2$. Suppose that $\textbf{w}_+$ has no neighbours in $\mathrm{conv}(\mathcal{W})$ other than $\textbf{w}_-$. Then we have exactly $\mathcal{W}=\{(-1^1),(-1^2)\}$ and the superpotential discussed in \cite[Example 16]{BetancourtDancerWang2016}. 
	
	Suppose that $\textbf{w}_+$ has a neighbour $\textbf{x}$ outside of the plane formed by $\textbf{v}, -\textbf{v}$ and $\textbf{u}$. Then $J(\textbf{d}+\textbf{w}_+,\textbf{d}+\textbf{x})=0$ becomes $1=-x_1/d_1$, and therefore $x_1=-2$ and $\textbf{x}=(-2^1,1^i)$ for some $i\geq 2$. In particular, there can only be one such out-of-plane neighbour, since if $i\neq j$, we would have the adjacent vertices $(-2^1,1^i)$ and $(-2^1,1^j)$, so that $d_1=4$, a contradiction. Therefore $r=3$ and $\textbf{x}=(-2^1,1^3)$. Thus $\textbf{w}_-$ would also have an adjacent vertex, which would have to be $\textbf{y}\equiv (-2^2,1^3)$. It is clear that $\textbf{x}$ and $\textbf{y}$ are adjacent and satisfy the unique sum condition. To see this, suppose otherwise, then $(-1,-1,1)\in\mathcal{W}$. However, the triangle $\textbf{u},\textbf{v},-\textbf{v}$, which is contained in the plane $\{x_3=0\}$, is a face of $\mathrm{conv}(\mathcal{C})$, and $(-1,-1,1)\in \{x_3>0\}$ while $(1,-1,-1),(-1,1,-1)\in\{x_3<0\}$. Therefore $J(\textbf{d}+\textbf{x},\textbf{d}+\textbf{y})=0$, implying that $d_3=1$.
	
	Thus, we have obtained a special case of the Bérard Bergery--Calabi ansatz discussed in \cite{BetancourtDancerWang2016}. We permute the indices $1\leftrightarrow 3$ to match the notation, so that the geometric data is $(d_1,d_2,d_3)=(1,2,2)$ and $\mathcal{W}=\{(0,-1,0),(0,0,-1),(1,-2,0),(1,0,-2)\}$. However, the superpotential vectors differ from those in Lemma \ref{10} and \cite[Example 18]{BetancourtDancerWang2016}. They are $\mathcal{C} = \frac{1}{2}(\textbf{d}+(\mathcal{B}, 0))$, where 
	\begin{equation*}
		\mathcal{B} =\{\textbf{u},\textbf{v},-\textbf{v},\textbf{x},\textbf{y}\}= \{(0,-1,-1),(0,1,-1),(0,-1,1),(1,0,-2),(1,-2,0)\}.
	\end{equation*}
	Returning to (\ref{superpotentialcases}), and using that $\frac{1}{2}(\textbf{u}+\textbf{v}) =(0,0,-1)$ and $\frac{1}{2}(\textbf{u}-\textbf{v}) =(0,-1,0)$, we get
	\begin{equation*}
		\frac{1}{2}f_\textbf{u}f_\textbf{v} = A_{(0,0,-1)},\qquad \frac{1}{2}f_\textbf{u}f_{-\textbf{v}} = A_{(0,-1,0)} \quad \mathrm{and}\quad f_\textbf{v}f_{-\textbf{v}}=E.
	\end{equation*}
	Note that $A_{(0,0,-1)},A_{(0,-1,0)}>0$. We solve the above to obtain, choosing the positive square root for all three and assuming that $E>0$,
	\begin{equation*}
		f_\textbf{v} = \sqrt{\frac{EA_{(0,0,-1)}}{A_{(0,-1,0)}}}, \qquad f_{-\textbf{v}} = \sqrt{\frac{EA_{(0,-1,0)}}{A_{(0,0,-1)}}}, \qquad f_\textbf{u}=\frac{2}{\sqrt{E}}\sqrt{A_{(0,0,-1)}A_{(0,-1,0)}}.
	\end{equation*}
	
	For the vertices $\textbf{x}=(1,0,-2)$ and $\textbf{y}=(1,-2,0)$, we get
	\begin{equation*}
		-\frac{1}{2}f_\textbf{x}^2=A_{(1,0,-2)}, \qquad -\frac{1}{2}f_\textbf{y}^2=A_{(1,-2,0)}.
	\end{equation*}
	
	The signs are consistent with real solutions, since $A_{(1,0,-2)},A_{(1,-2,0)}<0$. The final relation to check is not a unique sum, because $\frac{1}{2}(\textbf{v}+\textbf{y})=\frac{1}{2}(-\textbf{v}+\textbf{x})=(\frac{1}{2},-\frac{1}{2},-\frac{1}{2})$. We therefore have
	\begin{equation*}
		f_\textbf{v}f_\textbf{y} + f_{-\textbf{v}}f_\textbf{x}=0.
	\end{equation*}
	Thus, we must take opposite-sign square roots for $f_\textbf{x}$ and $f_\textbf{y}$, giving
	\begin{equation*}
		f_\textbf{x}=\pm\sqrt{-2A_{(1,0,-2)}}, \qquad f_\textbf{y}=\mp \sqrt{-2A_{(1,-2,0)}}.
	\end{equation*}
	The preceding equation then gives the requirement
	\begin{equation}\label{newexrequirement}
		\frac{A_{(0,0,-1)}^2}{A_{(1,0,-2)}}=\frac{A_{(0,-1,0)}^2}{A_{(1,-2,0)}}.
	\end{equation}
	If (\ref{newexrequirement}) is satisfied, then there is a superpotential. Note that this is exactly the same requirement as in \cite[Example 18]{BetancourtDancerWang2016}.
	
	If $\textbf{w}_+$ is of type I, say $\textbf{w}_+=(-1^1)$, and $\textbf{w}_-$ is of type II, then by (\ref{wpm1}), we must have that the nonzero components of $\textbf{w}_-$ are disjoint from those of $\textbf{w}_+$, which is impossible.
	
	If $\textbf{w}_+=(-1^1)$ and $\textbf{w}_-$ is of type III, then the components are disjoint by (\ref{wpm1}), so without loss of generality $\textbf{w}_-=(1^2,-2^3)$, in which case $\textbf{u}=(-1^1,1^2,-2^3)$ and $\textbf{v}=(-1^1,-1^2,2^3)$. The null condition for $\textbf{u},\pm\textbf{v}$ is then
	\begin{equation*}
		\frac{1}{d_1} + \frac{1}{d_2} + \frac{4}{d_3}=1.
	\end{equation*}
	Consider the plane $R$; it contains both $\textbf{w}_\pm$, and also $(-1^3)$. Since the segment $\overline{\textbf{w}_+\textbf{w}_-}$ is an edge of $\mathrm{conv}(\mathcal{W})$, we must have that $(-2^1,1^2),(-2^1,1^3), (-1^1,1^2,-1^3)\notin \mathcal{W}$, and thus also $(1^1,-1^2,-1^3),(-1^1,-1^2,1^3)\notin\mathcal{W}$.
	
	Each vertex $\textbf{x}$ in $R$ adjacent to $\textbf{w}_+=(-1^1)$ satisfies $J(\textbf{d}+\textbf{w}_+,\textbf{d}+\textbf{x})=0$ and thus $1=-x_1/d_1$, or $x_1<0$, which is not satisfied by any of the remaining vectors of $\mathcal{W}$ in $R$. Hence, if $\textbf{w}_+$ is of type I, $\textbf{w}_-$ cannot be of type III.
	
	Suppose $\textbf{w}_+$ is of type II, say $\textbf{w}_+=(1^1,-1^2,-1^3)$, and that $\textbf{w}_-$ is also of type II and thus lies in $R$. Say $\textbf{w}_-=(-1^1,1^2,-1^3)$, then $\textbf{u}=(-2^3)$ and $\textbf{v}=(2^1,-2^2)$. The null condition requires that $d_3=4$. The conditions $J(\textbf{d}+\textbf{w}_+,\textbf{d}+\textbf{w}_-)=0$ and $J(\textbf{d}\pm\textbf{v},\textbf{d}\pm \textbf{v})=0$ are equivalent and both reduce to $\frac{4}{d_1} + \frac{4}{d_2}=1$. Consider again the plane spanned by the first three coordinates.
	Since $\overline{\textbf{w}_+\textbf{w}_-}$ is an edge, we have $(1^1,-2^2),(1^1,-2^3)\notin\mathcal{W}$. Furthermore, $(1^1,-2^2,0)$, $(0,-2,1)$ and $(-1^1,-1^2,1^3)$ cannot be vertices adjacent to $\textbf{w}_+$, since they are not in $\ker J(\textbf{d}+\textbf{w}_+,\textbf{d}+\cdot)$. This is a contradiction, as can be seen from Figure \ref{diagram} and the fact that $(-1^1,-1^2,1^3)\in\mathcal{W}$.
	
	Suppose that $\textbf{w}_+=(1^1,-1^2,-1^3)$ is of type II and $\textbf{w}_-$ is of type III. By (\ref{wpm1}), we must have $\textbf{w}_-=(1^2,-2^3)$, up to permutation $2\leftrightarrow 3$. This leads to $d_3=2d_2$. Then $\textbf{u}=(1^1,-3^3)$ and $\textbf{v}=(1^1,-2^2,1^3)$. The null condition for $\textbf{u},\textbf{v}$ is then
	\begin{equation}\label{(2,1)tIItIIIshare2nullcondition}
		\frac{1}{d_1} + \frac{9}{d_3}=1.
	\end{equation}
	Consider the neighbours of $\textbf{w}_+$. If $(1^1,-2^2)$ is adjacent to $\textbf{w}_+$, then $1 = \frac{1}{d_1} + \frac{2}{d_2} = \frac{1}{d_1} + \frac{4}{d_3}$, contradicting (\ref{(2,1)tIItIIIshare2nullcondition}). We have that $(-2^2,1^3)$ and $(-1^1,-1^2,1^3)$ cannot be adjacent to $\textbf{w}_+$ for the same reasons as above. Since $(-1^1,-1^2,1^3)\in\mathcal{W}$, we have a contradiction, as made clear by Figure \ref{diagram}.

	The last remaining case is when $\textbf{w}_\pm$ are both of type III. By (\ref{wpm1}), they cannot share nonzero entries, and thus we may take without loss of generality $\textbf{w}_+=(1^1,-2^2)$ and $\textbf{w}_-=(1^3,-2^4)$. Then $\textbf{u}=(1^1,-2^2,1^3,-2^4)$ and $\textbf{v}=(1^1,-2^2,-1^3,2^4)$. The null condition for $\textbf{u},\textbf{v}$ is
	\begin{equation}\label{(2,1)tIIItIIInull}
		\frac{1}{d_1} + \frac{4}{d_2} + \frac{1}{d_3} + \frac{4}{d_4}=1.
	\end{equation}
	Consider the neighbours of $\textbf{w}_+$ in the plane $R$. Then $\textbf{w}_+$ cannot be adjacent to the following vertices:
	\begin{itemize}
		\item $(-2^2,1^3)$, because in that case $d_2=4$, contradicting (\ref{(2,1)tIIItIIInull}).
		\item $(-1^1,-1^2,1^3)$, as that would give $-\frac{1}{d_1} + \frac{2}{d_2}=1$, which is impossible as $d_2\geq 2$.
		\item $(-2^1,1^3)$, as that would mean $1=-\frac{2}{d_1}$.
		\item $(-2^1, 1^2)$, since that gives $1=-\frac{2}{d_1} - \frac{2}{d_2}$.
		\item $(-1^1)$, as it gives $1=-\frac{1}{d_1}$.
	\end{itemize}
	Since $(-1^2)\in\mathcal{W}$ by Lemma 4.1(ii) of \cite{DancerWang2005}, and we have eliminated all other possibilities as can be seen in Figure \ref{diagram}, $(-1^2)$ must be adjacent to $\textbf{w}_+$, and therefore $1=\frac{2}{d_2}$ and $d_2=2$, contradicting (\ref{(2,1)tIIItIIInull}) and completing the proof of the lemma.
\end{proof}

We may summarise Lemmas \ref{20}, \ref{11}, \ref{10} and \ref{newsuperpotential} in the following statement. Superpotentials (\ref{sup1})-(\ref{sup4}) had already been found in \cite{BetancourtDancerWang2016}.

\begin{thm}\label{classification}
	Suppose that $K$ is connected. Then, under the assumption that there is no coordinate position in which all weight vectors vanish, the only configurations $\mathcal{W}$ admitting a superpotential of the form (\ref{superansatz}) are, up to permutation of the irreducible summands,
	\begin{enumerate}
		\item $\mathcal{W}=\emptyset$ with $d_1=1$. This is the 2-dimensional Bryant soliton, discussed in \cite[Example 15]{BetancourtDancerWang2016}. There is one superpotential, with
		$\mathcal{C}=\frac{1}{2}(\textbf{d}+\{(1,0),(-1,0)\})$.\label{sup1}
		\item $\mathcal{W}=\{(-1)\}$ with $d_1=4$. This is the 5-dimensional Bryant soliton, discussed in \cite[Example 15]{BetancourtDancerWang2016}. There is one superpotential, with $\mathcal{C} = \frac{1}{2}(\textbf{d}+\{(0,0),(-2,0)\})$.
		\item $\mathcal{W}=\{(-1,0),(0,-1)\}$ with $d_1=d_2=2$. This is the warped product of two Einstein 2-manifolds with positive Einstein constant, discussed in \cite[Example 16]{BetancourtDancerWang2016}. There is one superpotential, with $\mathcal{C}=\frac{1}{2}(\textbf{d}+\{(-1,-1,0),(-1,1,0),(1,-1,0)\})$.
		\item For each $r\geq 2$, $\mathcal{W}=\{(-1^2),\dots,(-1^r),(1^1,-2^2),\dots,(1^1,-2^r)\}$, with $d_1=1$ and $d_i=2m_i$ for positive integers $m_i$, $i=2,\dots,r$. This is the Bérard Bergery--Calabi ansatz, discussed in \cite[Example 18]{BetancourtDancerWang2016}. We have a superpotential with $\mathcal{C} =\frac{1}{2}(\textbf{d}+\{(-1)^1,(1^1,-2^2),\dots,(1^1,-2^r),(1^1)\})$.\label{sup4}
		\item In the special case $r=3$, $d_2=d_3=2$ of (4), there is an additional superpotential, with $\mathcal{C}=\frac{1}{2}(\textbf{d}+\{(0,-1,-1,0),(0,1,-1,0),(0,-1,1,0),(1,0,-2,0),(1,-2,0,0)\})$.\label{newsup}
	\end{enumerate}
\end{thm}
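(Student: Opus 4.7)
The plan is to assemble the preceding lemmas via a case analysis on the pair $(a,b)$ introduced in Lemma \ref{abrange}, and then verify that each surviving configuration actually admits a superpotential.

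First, the case $r=1$ sits outside the running hypothesis $r\geq 2$ of this subsection and is already handled in the remarks opening Section \ref{supersection}: $n=d_1=1$ gives the $2$-dimensional Bryant soliton of item (\ref{sup1}), and $n>1$ forces $\mathcal{W}=\{(-1)\}$ with $d_1=4$, giving the $5$-dimensional Bryant soliton of item (2)---both recorded in Example 15 of \cite{BetancourtDancerWang2016}. Assume from now on that $r\geq 2$. By Lemma \ref{CinP}, every element of $\mathcal{C}$ lies in $P$; by Lemma \ref{notd/2}, the point $\tfrac{1}{2}\textbf{d}$ is not a vertex of $\mathrm{conv}(\mathcal{C})$; and by Lemma \ref{abrange}, the pair $(a,b)$ belongs to $\{(1,0),(2,0),(1,1),(2,1)\}$, so the classification reduces to exactly four subcases.

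These four subcases are then dispatched by the four main lemmas: Lemma \ref{20} rules out $(a,b)=(2,0)$ and Lemma \ref{11} rules out $(a,b)=(1,1)$, leaving only $(1,0)$ and $(2,1)$. Lemma \ref{10} shows that $(a,b)=(1,0)$ forces $\mathcal{W}$ to be of B\'erard Bergery--Calabi type, giving item (\ref{sup4}); and Lemma \ref{newsuperpotential} shows that $(a,b)=(2,1)$ leaves only the two-summand case $\mathcal{W}=\{(-1,0),(0,-1)\}$ of item (3) and the special three-summand B\'erard Bergery--Calabi configuration of item (\ref{newsup}). For the converse, existence of superpotentials in items (\ref{sup1})--(\ref{sup4}) is realised by the explicit formulae recalled in \cite{BetancourtDancerWang2016}, while item (\ref{newsup}) is realised by the explicit superpotential constructed at the end of the proof of Lemma \ref{newsuperpotential}, subject to the algebraic compatibility condition on the $A_w$'s noted there.

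At this stage of the paper the theorem is essentially a bookkeeping exercise, so no genuinely new obstacle appears; the only point that requires care is the matching of labels---checking that each lemma's ``the only $\mathcal{W}$ admitting a superpotential is\ldots'' conclusion lines up with exactly one item of the theorem, and that the four lemmas together exhaust the possibilities left open by Lemma \ref{abrange}. Granted this, the proof reduces to a one-line concatenation of the four lemmas plus a sentence confirming existence in each allowed case.
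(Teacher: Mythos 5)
Your proposal is correct and matches the paper's own treatment: the theorem is stated there precisely as a summary of Lemmas \ref{20}, \ref{11}, \ref{10} and \ref{newsuperpotential} (with the $r=1$ cases handled by the remark at the start of Section \ref{supersection} and existence supplied by the examples in \cite{BetancourtDancerWang2016} and the construction inside Lemma \ref{newsuperpotential}). Your assembly via Lemmas \ref{CinP}, \ref{notd/2} and \ref{abrange} and the four $(a,b)$ subcases is exactly the paper's route.
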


\medskip

\textit{Remark.} As in the Ricci-flat case, the only superpotential for $r>3$ is for the Bérard Bergery--Calabi ansatz. For $r=2,3$, only cases (2) and (3) of Theorem 1.14 of \cite{DancerWang2011b} are allowed, with further restrictions on the dimension.

\section{Superpotentials in the non-steady case}\label{nonsteadysection}

In this section, we discuss the application of the methods of the previous section to expanding or shrinking solitons. We show that even if we allow polynomial coefficients, no superpotential exists in the non-steady case except in the simplest case $n=1$. We also conclude from the proof that in the steady case it suffices to consider superpotentials with constant, rather than polynomial, coefficients.

We compute from the expression for the Hamiltonian $\mathcal{H}$ that the superpotential condition in the general case is
\begin{equation*}
	J(\nabla f, \nabla f) = e^{\mathbf{d}\cdot\mathbf{q}}\left( E - \lambda(n+1) + \lambda u + \sum_{w\in \mathcal{W}} A_w e^{\mathbf{w}\cdot \mathbf{q}}\right).
\end{equation*}
We note immediately that there are no superpotentials of the form (\ref{superansatz}) for expanding or shrinking solitons.

\subsection{Superpotentials with polynomial coefficients}
In view of the above, it is worth considering superpotentials with polynomial coefficients $f_c(\textbf{q})$. Then, the main relation (\ref{superpotentialcases}) is instead
\begin{multline*}
	\sum_{\textbf{a}+\textbf{c}=\textbf{b}} J(\textbf{a},\textbf{c})f_af_c + J(\nabla f_a, \textbf{c})f_c + J(\textbf{a},\nabla f_c) f_a + J(\nabla f_a, \nabla f_c) \\
	=
	\begin{cases*}
		A_w & if $\mathbf{b} = \mathbf{d}+\mathbf{w}$  for some $\mathbf{w}\in\mathcal{W}$\\
		E - \lambda(n+1) + \lambda u & if $\mathbf{b}=\mathbf{d}$\\
		0 & otherwise.
	\end{cases*}
\end{multline*}
Considering the top degree part $J(\textbf{a},\textbf{c})f_af_c$, we obtain immediately that Lemma \ref{lemma12} remains true. In particular, if $\textbf{c}$ is a non-null vertex, then $f_c$ is a constant polynomial. Note also that Proposition \ref{prop13} remains true.

If $\textbf{c}$ is a null vertex, then first note that $\textbf{c}\neq \frac{\textbf{d}}{2}$. Thus we must have that
\begin{equation*}
	2J(\nabla f_c, \textbf{c})f_c + J(\nabla f_c, \nabla f_c)= \begin{cases*} A_w & if $2\textbf{c}\in \textbf{d}+\mathcal{W}$\\
		0 & otherwise.
	\end{cases*}
\end{equation*}
The left-hand side must be a constant, so $J(\nabla f_c,\textbf{c})=0$, which also results in $J(\nabla f_c, \nabla f_c)=0$ and thus $2\textbf{c}\notin \textbf{d}+\mathcal{W}$.

Also, we can verify by considering the top degree terms that Lemma \ref{edgeproportional} remains true, and so do Corollary \ref{atmost2} and Lemmas \ref{bothsidesP}, \ref{CinP}, \ref{notd/2} and \ref{abrange}.

The following considerations rule out essentially every possibility of using polynomial coefficients to find a superpotential for non-steady solitons.

\begin{lem}\label{noa=2}
	For any $\lambda\in\mathbb{R}$ and $n>1$, there are no superpotentials with polynomial coefficients which have two adjacent null vertices $\frac{1}{2}(\textbf{d}+\textbf{a})$ and $\frac{1}{2}(\textbf{d}-\textbf{a})$ for some $\textbf{a}\in \mathbb{R}^r$.
\end{lem}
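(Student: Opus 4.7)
The plan is to use the null-vertex identities to force $f_\pm$ into an explicit univariate form, then to compare the resulting two-variable expression against the single-variable right-hand side of the $\textbf{b}=\textbf{d}$ equation, and finally to argue that any other contributions cannot repair the mismatch.

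First I would read off from Lemma~\ref{lemma11} that the null conditions $J(\textbf{c}_\pm,\textbf{c}_\pm)=0$ translate to $\sum_i a_i^2/d_i=1$, and that $J(\textbf{c}_+,\textbf{c}_-)=\tfrac12$. The discussion preceding the lemma supplies at each null vertex the polynomial identities $J(\nabla f_\pm,\textbf{c}_\pm)\equiv 0$ and $J(\nabla f_\pm,\nabla f_\pm)\equiv 0$, together with $2\textbf{c}_\pm\notin\textbf{d}+\mathcal{W}$. Since $J$ has signature $(1,r)$ on $\mathbb{R}^{r+1}$---negative definite on $\{v_{r+1}=0\}$ by the paper's remark, plus one positive direction coming from $J(\textbf{d},\textbf{d})=1$---its restriction to the lightlike hyperplane $\ker J(\textbf{c}_+,\cdot)$ is negative semi-definite with radical $\mathbb{R}\textbf{c}_+$. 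Therefore $\nabla f_+$ must be a pointwise scalar multiple of $\textbf{c}_+$, and integrability of the gradient gives $f_+=F(\textbf{c}_+\cdot\textbf{q})$ for some univariate polynomial $F$; similarly $f_-=G(\textbf{c}_-\cdot\textbf{q})$.

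Next, using Lemma~\ref{lemma11} and the identities just obtained one computes $J(\textbf{c}_\pm,\nabla f_\mp)=-\tfrac12\,\partial_u f_\mp$. Summing the contributions of the ordered pairs $(\textbf{c}_+,\textbf{c}_-)$ and $(\textbf{c}_-,\textbf{c}_+)$ to (\ref{superpotentialcases}) at $\textbf{b}=\textbf{d}$ telescopes to $(F+F')(G+G')$, evaluated at $s=\textbf{c}_+\cdot\textbf{q}$ and $t=\textbf{c}_-\cdot\textbf{q}$. Hence the $\textbf{b}=\textbf{d}$ equation reads
\[
(F+F')(G+G') + T(\textbf{q}) \;=\; E-\lambda(n+1)+\lambda u,
\]
where $T$ collects contributions from any other $\mathcal{C}$-pairs summing to $\textbf{d}$. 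Writing $s=L_+(\textbf{q})-u$ and $t=L_-(\textbf{q})-u$ with $L_\pm$ independent of $u$, the $u$-degree of $(F+F')(G+G')$ equals $\deg F+\deg G$. For $\lambda\neq 0$ matching with the degree-one right-hand side forces $\deg F+\deg G\leq 1$: in the degree-one case, matching the $q_i$-dependence forces $L_+$ or $L_-$ identically zero, hence $|a_i|=d_i$ for each $i$, and the null condition collapses to $n=1$, contradicting $n>1$; the degree-zero case leaves a $\lambda u$-mismatch. For $\lambda=0$ the right-hand side is constant, so both $F$ and $G$ must be constants (since $\deg(F+F')=\deg F$), and the superpotential degenerates to a constant-coefficient one outside the polynomial-coefficient scope of this subsection.

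The hardest part will be controlling the auxiliary term $T(\textbf{q})$. By the polynomial analog of Lemma~\ref{lemma12}, $f_\textbf{p}$ is constant at every non-null vertex and takes the same univariate-composition form $F_\textbf{p}(\textbf{p}\cdot\textbf{q})$ at any other null vertex, so $T$ is a finite sum of products of such factors evaluated at various linear forms. Combining this with Lemma~\ref{notd/2} (which forbids $\textbf{d}/2\in\mathcal{C}$) and the adjacency of $\textbf{c}_\pm$ through an edge of $\mathrm{conv}(\mathcal{C})$ should allow one to rule out enough decompositions of $\textbf{d}$ to ensure that $T$ cannot produce the $L_+$-linear or $u$-linear structure needed to absorb the mismatch identified above.
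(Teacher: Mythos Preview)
Your Lorentzian-signature argument that $\nabla f_\pm$ is pointwise proportional to $\textbf{c}_\pm$, hence $f_\pm=F(\textbf{c}_\pm\cdot\textbf{q})$, is correct, and the computation that the ordered pairs $(\textbf{c}_+,\textbf{c}_-),(\textbf{c}_-,\textbf{c}_+)$ contribute $(F+F')(G+G')$ is right. The genuine gap is your handling of the auxiliary term $T(\textbf{q})$: the final paragraph is a sketch, not a proof, and neither Lemma~\ref{notd/2} nor the structural remarks you invoke obviously prevent $T$ from supplying exactly the $q_i$- or $u$-dependence needed to absorb the mismatch.

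The paper bypasses this issue entirely by first observing that $T\equiv 0$. Since $\textbf{c}_+,\textbf{c}_-$ are \emph{adjacent} by hypothesis, $\overline{\textbf{c}_+\textbf{c}_-}$ is an edge of $\mathrm{conv}(\mathcal{C})$, and the closest point argument on that edge gives the unique sum condition: if $\textbf{x}\neq\textbf{c}_-$ were the point of $\mathcal{C}$ on the edge nearest $\textbf{c}_+$, then $J(\textbf{c}_+,\textbf{x})=0$ would force $J(\textbf{c}_+,\textbf{c}_-)=0$ by linearity, contradicting $J(\textbf{c}_+,\textbf{c}_-)=\tfrac12$; the remaining possibility $\textbf{c}_++\textbf{x}\in\textbf{d}+\tilde{\mathcal{W}}$ is ruled out by pairing with the analogous point near $\textbf{c}_-$. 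You have all the tools for this step but did not take it.

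With $T=0$ in hand, the paper then proceeds more directly than you do. The $\textbf{b}=\textbf{d}$ relation has right-hand side of total degree one, so one of $f_\pm$ is constant and the other is linear; since the right-hand side has no $q_i$-dependence, the linear one must be $Au+B$. Then $\nabla f_a=(0,\dots,0,A)$ and the $2\textbf{c}_+$ relation forces $J(\nabla f_a,\nabla f_a)=-\tfrac{n-1}{4}A^2=0$, so $A=0$ for $n>1$. Your route via $(F+F')(G+G')$ also works once $T=0$---the matching $L_+=0\Rightarrow a_i=-d_i\Rightarrow \sum a_i^2/d_i=n=1$ is a nice alternative---but the signature machinery is heavier than what the problem requires.
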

\begin{proof}
	Suppose for sake of contradiction that there are indeed two such vertices. Using the closest point argument, we see that they satisfy the unique sum condition. Thus
	\begin{multline*}
		J\left(\frac{1}{2}(\textbf{d}+\textbf{a}),\frac{1}{2}(\textbf{d}-\textbf{a})\right)f_af_{-a} + J\left(\nabla f_a, \frac{1}{2}(\textbf{d}-\textbf{a})\right)f_{-a} + J\left(\frac{1}{2}(\textbf{d}+\textbf{a}), \nabla f_{-a}\right) f_a \\
		+ J(\nabla f_a, \nabla f_{-a}) = E - \lambda (n+1) + \lambda u.
	\end{multline*}
	Since the highest order term is the first one, which does not vanish, and the right-hand side is a polynomial of degree at most one, at most one of $f_a$ and $f_{-a}$ is non-constant, say $f_a$. Then $f_a=Au+B$ for some constants $A, B$, and
	\begin{equation*}
		J\left(\frac{1}{2}(\textbf{d}+\textbf{a}),\frac{1}{2}(\textbf{d}-\textbf{a})\right)f_af_{-a} + J\left(\nabla f_a, \frac{1}{2}(\textbf{d}-\textbf{a})\right)f_{-a} = E - \lambda (n+1) + \lambda u.
	\end{equation*}
	The superpotential condition for $\frac{1}{2}(\textbf{d}+\textbf{a})\neq \frac{1}{2}\textbf{d}$ gives
	\begin{equation*}
		2J\left(\nabla f_a, \frac{1}{2}(\textbf{d}+\textbf{a})\right)f_a+ J(\nabla f_a,\nabla f_a)=const.
	\end{equation*}
	By degree considerations, we must have $J(\nabla f_a, \frac{1}{2}(\textbf{d}+\textbf{a}))=0$, and therefore $J(\nabla f_a, \nabla f_a)=-\frac{(n-1)}{4}A^2=0$ using the formula (\ref{J}) for $J$ and the linearity of $J$. Thus $f_a$ is constant unless $n=1$ (and $\textbf{a}\notin\mathcal{W}$).
\end{proof}
The above lemma rules out polynomial-coefficient superpotentials for the $a=2$ cases of Lemma \ref{abrange}. Furthermore, we note that in the case $(a,b)=(1,1)$, the beginning of the proof of Lemma \ref{11} remains valid, so the existence of adjacent vertices $\textbf{c}_\pm$ contradicts Lemma \ref{noa=2}. Finally, for $(a,b)=(1,0)$, note that by the beginning of the proof of Lemma \ref{10}, there are two adjacent null vertices $\frac{1}{2}(\textbf{d}\pm\textbf{v})$, which contradicts Lemma \ref{noa=2} when $n>1$. We have therefore concluded that
\begin{pro}
	Suppose that $K$ is connected and that $n>1$. Then, under the assumption that there is no coordinate position in which all weight vectors vanish, there is no configuration $\mathcal{W}$ which admits a superpotential with nonconstant polynomial coefficients for any $\lambda\in\mathbb{R}$.
\end{pro}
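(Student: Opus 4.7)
The plan is a case-by-case reduction to Lemma \ref{noa=2}, mirroring the structure of the steady classification. The author has already remarked that the preparatory convex-geometric machinery---Lemmas \ref{lemma12}, \ref{edgeproportional}, \ref{bothsidesP}, \ref{CinP}, \ref{notd/2} and \ref{abrange}, together with Proposition \ref{prop13} and Corollary \ref{atmost2}---transfers verbatim to the polynomial-coefficient setting, because each proof only probes the top-degree $J(\textbf{a},\textbf{c})f_\textbf{a} f_\textbf{c}$ component of the superpotential equation. In particular the case enumeration $(a,b) \in \{(1,0),(1,1),(2,0),(2,1)\}$ from Lemma \ref{abrange} is still in force, and it suffices to rule out each case.

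For the two cases with $a=2$, the derivation inside the proof of Lemma \ref{abrange} already shows that the two null vertices in $\{s>-1\}$ must take the form $\frac{1}{2}(\textbf{d}\pm\textbf{a})$ (they must sum to $\textbf{d}$, else collinearity contradicts Lemma \ref{CinP}). Being the only vertices of $\mathrm{conv}(\mathcal{C})$ in the open half-space $\{s>-1\}$, they are separated from every other vertex by a supporting hyperplane parallel to $P$, hence form an edge of $\mathrm{conv}(\mathcal{C})$. Lemma \ref{noa=2} then applies directly and produces a contradiction for $n>1$.

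For $(a,b)=(1,0)$, the opening paragraph of the proof of Lemma \ref{10} shows by the $\ker J(\textbf{d}-\textbf{v},\cdot)$ argument and the closest-point argument that there exist exactly two null vertices $\frac{1}{2}(\textbf{d}\pm\textbf{v})$ and that they are adjacent; Lemma \ref{noa=2} then yields the contradiction immediately. For $(a,b)=(1,1)$, the opening paragraph of the proof of Lemma \ref{11}---using only the carryover tools---establishes that the two null vertices $\textbf{c}_\pm$ on opposite sides of $P$ are adjacent with $\textbf{c}_++\textbf{c}_- = \textbf{d}+\textbf{w}_0$ for some $\textbf{w}_0 \in \tilde{\mathcal{W}}$. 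If $\textbf{w}_0 = 0$, Lemma \ref{noa=2} applies directly. If $\textbf{w}_0 \neq 0$, the superpotential equation at $\textbf{b} = \textbf{d}+\textbf{w}_0$ has constant right-hand side $A_{\textbf{w}_0}$, so the same degree-counting argument as in Lemma \ref{noa=2} (together with $J(\textbf{c}_+,\textbf{c}_-)\neq 0$, which follows from $\textbf{c}_\pm$ being null and $\sum v_i^2/d_i = 1$) forces both $f_{\textbf{c}_\pm}$ to be constants; combined with the fact that non-null vertices have constant coefficients and that $(a,b)=(1,1)$ excludes further null vertices in the relevant half-spaces, the superpotential reduces to a constant-coefficient one, which is impossible for $\lambda \neq 0$ (by the observation at the start of the section) and, for $\lambda=0$, is excluded by Theorem \ref{classification}, which admits no $(1,1)$-configurations.

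The main obstacle I expect is the $(1,1)$ subcase with $\textbf{w}_0 \neq 0$, where Lemma \ref{noa=2} does not apply verbatim and one must run a parallel top-degree analysis with a constant right-hand side to extract constancy of $f_{\textbf{c}_\pm}$; once this is done, the reduction to previously handled cases is automatic, and the other three subcases are direct applications of Lemma \ref{noa=2}.
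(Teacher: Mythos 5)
Your proposal is correct and follows the paper's own route essentially verbatim: the same observation that the convex-geometric lemmas carry over through the top-degree part of the polynomial superpotential equation, the same case analysis $(a,b)\in\{(2,0),(2,1),(1,1),(1,0)\}$ from Lemma \ref{abrange}, and the same reduction of each case to Lemma \ref{noa=2} via the adjacency of the two null vertices. In the $(1,1)$ case you are in fact slightly more careful than the paper, which invokes Lemma \ref{noa=2} directly even though there $\textbf{c}_++\textbf{c}_-=\textbf{d}+\textbf{w}_0$ with $\textbf{w}_0\neq\textbf{0}$; your parallel top-degree argument with constant right-hand side $A_{\textbf{w}_0}$ is the right repair (and, even more directly, the entire geometric proof of Lemma \ref{11} only uses the carried-over top-degree facts, so the $(1,1)$ configuration is excluded outright for any coefficients and any $\lambda$).
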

\subsubsection{The two-dimensional case}
In the $n=1$ case, the setup is simple, as the principal orbit has no scalar curvature, so $\tilde{\mathcal{W}}=\{(0,0)\}$, $\textbf{d}=(1,-2)$ and
\begin{equation*}
	J\left((p,\phi),(p',\phi')\right)=-\left(pp' + \frac{p'\phi}{2} + \frac{p\phi'}{2}\right).
\end{equation*}It can easily be verified that there is only one nontrivial arrangement of superpotential vectors \cite{BetancourtDancerWang2016}, namely $\mathcal{C}=\{\textbf{c}_1,\textbf{c}_2\}$, where
\begin{equation*}
	\textbf{c}_1=\frac{1}{2}(\textbf{d}+(1,0)) = (1,-1) \qquad \text{and} \qquad \textbf{c}_2=\frac{1}{2}(\textbf{d}+(-1,0))=(0,-1)
\end{equation*}
are both null. The superpotential condition at their sum is
\begin{equation}\label{n=1condition}
	2\left(J(\textbf{c}_1,\textbf{c}_2) f_1f_2 + J(\nabla f_1,\textbf{c}_2)f_2 + J(\textbf{c}_1,\nabla f_2)f_1 + J(\nabla f_1,\nabla f_2)\right) = E - \lambda(n+1)  +\lambda u.
\end{equation}
Since $J(\textbf{c}_1,\textbf{c}_2) = \frac{1}{2}\neq 0$, from degree considerations we obtain that $f_j = A_j u + B_j$ for some constants $A_j,B_j$, $j=1,2$. The superpotential condition at $\textbf{c}_j$ is then
\begin{equation*}
	2J(\nabla f_j, \textbf{c}_j) + J(\nabla f_j,\nabla f_j)=0.
\end{equation*}
Since $J(\nabla f_j,\nabla f_j)=0$, we have $J(\nabla f_1, \textbf{c}_1)=J(\nabla f_2,\textbf{c}_2)=0$ and consequently $A_1=0$. Noting that $J(\textbf{c}_1,\nabla f_2) =-A_2/2$, the condition (\ref{n=1condition}) becomes
\begin{equation*}
	2\left(\frac{1}{2}B_1(A_2u + B_2) - \frac{A_2B_2}{2}\right) = E-\lambda(n+1) + \lambda u.
\end{equation*}
Separating by degree, we get $A_2B_1=\lambda$ and $E-\lambda(n+1) = B_1(B_2-A_2)$. Fixing $B_1\equiv a$, then $A_2=\frac{\lambda}{a}$ and $B_2 = \frac{1}{a}(E-\lambda n)$. There is therefore the superpotential
\begin{equation*}
	f=f_1 e^{\textbf{c}_1\cdot \textbf{q}} + f_2 e^{\textbf{c}_2\cdot\textbf{q}} = ae^{q-u} + \frac{1}{a}\left(\lambda u + (E-\lambda n)\right) e^{-u}.
\end{equation*}
Note that in the $\lambda=0$ (i.e. steady) case, we recover the superpotential given in \cite[Example 15]{BetancourtDancerWang2016}.

The associated first-order subsystem, after substituting back $h=e^{\frac{q}{2}}$, is given by
\begin{align*}
	\dot{h} &= -\frac{a}{2}h^2 + \frac{1}{2a}(\lambda u + E - 2\lambda)\\
	\dot{u} &= -ah.
\end{align*}
Substituting gives
\begin{equation*}
	\ddot{h} + a\dot{h}h + \frac{\lambda}{2}h=0.
\end{equation*}
In order for the soliton to close up at a singular orbit, one requires that $h(0)=0$. We can choose the boundary condition $u(0)=0$, which implies that $\dot{h}(0)=\frac{1}{2a}(E-2\lambda)$.
	
\section{First order subsystem and explicit solution}\label{examplesection}

In this section, we take the superpotential of Theorem \ref{classification}(\ref{newsup}) and study the corresponding first-order subsystem of the Hamiltonian system. We find an explicit solution, which satisfies suitable boundary conditions so that it is a smooth steady soliton metric with hypersurfaces which are $S^1$-bundles over $S^2\times S^2$, collapsing to $S^2$ at the singular orbit.

For a superpotential $f$, the ODE subsystem takes the form \cite{BetancourtDancerWang2016}
\begin{equation}\label{firstordersubsystem}
	\dot{\textbf{q}}=2\textbf{v}^{-1}J\nabla f,
\end{equation}
where $\textbf{v}$ is the extended relative volume $\textbf{v}=\exp (\frac{1}{2}\textbf{d}\cdot \textbf{q})$.

\subsection{Singular initial value problem}

We have the geometric data $\textbf{d}=(1,2,2,-2)$ and $\mathcal{W}=\{(0,-1,0),(0,0,-1),(1,-2,0),(1,0,-2)\}$. To simplify the presentation, we will translate the coordinates $q_2$ and $q_3$ so that $A_{(0,0,-1)}=A_{(0,-1,0)}=\lambda$. Then the condition (\ref{newexrequirement}) becomes $A_{(1,0,-2)}=A_{(1,-2,0)}\equiv A$, and the superpotential is
\begin{multline*}
	f=\sqrt{E}\, e^{\frac{q_1}{2} + \frac{3q_2}{2} + \frac{q_3}{2} - u} + \sqrt{E}\, e^{\frac{q_1}{2} + \frac{q_2}{2} + \frac{3q_3}{2} - u} + \frac{2\lambda}{\sqrt{E}}\, e^{\frac{q_1}{2} + \frac{q_2}{2} + \frac{q_3}{2} -u}\\
	+\sqrt{-2A}\, e^{q_1+q_2-u} -\sqrt{-2A}\, e^{q_1 + q_3 -u}.
\end{multline*}
By direct computation, we see that the first-order system (\ref{firstordersubsystem}) takes the form
\begin{equation}\label{newODE}
	\begin{split}
		\dot{q}_1=&\, \sqrt{-2A} e^{\frac{q_1}{2} - q_2} - \sqrt{-2A} e^{\frac{q_1}{2} - q_3}\\
		\dot{q}_2=&\, -\frac{\sqrt{E}}{2} e^{\frac{q_2}{2} - \frac{q_3}{2}} +\frac{\sqrt{E}}{2} e^{-\frac{q_2}{2} + \frac{q_3}{2}} + \frac{\lambda}{\sqrt{E}} e^{-\frac{q_2}{2} - \frac{q_3}{2}} - \sqrt{-2A} e^{\frac{q_1}{2}-q_2}\\
		\dot{q}_3=&\,\frac{\sqrt{E}}{2} e^{\frac{q_2}{2} - \frac{q_3}{2}} -\frac{\sqrt{E}}{2} e^{-\frac{q_2}{2} + \frac{q_3}{2}} + \frac{\lambda}{\sqrt{E}} e^{-\frac{q_2}{2} - \frac{q_3}{2}} + \sqrt{-2A} e^{\frac{q_1}{2}-q_3}\\
		\dot{u}=&\,-\frac{\sqrt{E}}{2} e^{\frac{q_2}{2} - \frac{q_3}{2}} - \frac{\sqrt{E}}{2} e^{-\frac{q_2}{2} + \frac{q_3}{2}} + \frac{\lambda}{\sqrt{E}} e^{-\frac{q_2}{2} - \frac{q_3}{2}}.
	\end{split}
\end{equation}
Now, we will perform the change of variables
\begin{equation*}
	\alpha = -2A e^{q_1}, \qquad \beta_1 = e^{q_2}, \qquad \beta_2 = e^{q_3}.
\end{equation*}
The first three equations of (\ref{newODE}), which form the ODE system for $\alpha,\beta_1,\beta_2$, are then
\begin{equation*}
	\begin{split}
		\frac{\dot\alpha}{\alpha} &= \sqrt{\alpha} \left(\frac{1}{\beta_1} - \frac{1}{\beta_2}\right)\\
		\frac{\dot\beta_1}{\beta_1} &= \frac{\sqrt{E}}{2}\left(\frac{\beta_2-\beta_1}{\sqrt{\beta_1\beta_2}}\right) + \frac{\lambda}{\sqrt{E}} \frac{1}{\sqrt{\beta_1\beta_2}} - \frac{\sqrt{\alpha}}{\beta_1}\\
		\frac{\dot\beta_2}{\beta_2} &= -\frac{\sqrt{E}}{2}\left(\frac{\beta_2-\beta_1}{\sqrt{\beta_1\beta_2}}\right) + \frac{\lambda}{\sqrt{E}} \frac{1}{\sqrt{\beta_1\beta_2}} + \frac{\sqrt{\alpha}}{\beta_2}.
	\end{split}
\end{equation*}
Adding the three above equations results in
\begin{equation}\label{newcoordinateODE}
	\frac{\frac{d}{dt}(\alpha\beta_1\beta_2)}{\alpha\beta_1\beta_2} = \frac{2\lambda}{\sqrt{E}} \frac{1}{\sqrt{\beta_1\beta_2}}.
\end{equation}
Similarly to \cite{DancerWang2011}, we define the new radial coordinate $ds = \sqrt{\alpha}\,dt$. Then (\ref{newcoordinateODE}) becomes
\begin{equation*}
	\frac{d}{ds}(\alpha\beta_1\beta_2)=\sqrt{\alpha\beta_1\beta_2} \frac{2\lambda}{\sqrt{E}}.
\end{equation*}
We take the singular orbit of the soliton, where at least one of $\alpha$, $\beta_1$ or $\beta_2$ vanishes, to be at $s=0$. Therefore
\begin{equation*}
	\sqrt{\alpha} = \frac{s}{\sqrt{\beta_1\beta_2}} \frac{\lambda}{\sqrt{E}} \qquad \mathrm{and} \qquad \frac{d}{dt} = \frac{\lambda}{\sqrt{E}} \frac{s}{\sqrt{\beta_1\beta_2}} \frac{d}{ds}.
\end{equation*}
The equations for $\beta_1$ and $\beta_2$ become
\begin{equation}\label{foscolohaskinsODE}
	\begin{split}
		\beta_1' &= \frac{1}{s}\left(\frac{E}{2\lambda}\beta_1(\beta_2-\beta_1) + \beta_1\right)-1\\
		\beta_2' &= \frac{1}{s}\left(-\frac{E}{2\lambda} \beta_2(\beta_2-\beta_1) + \beta_2\right) +1,
	\end{split}
\end{equation}
where $'$ denotes $\frac{d}{ds}$. This is a singular initial value problem for which we will apply the criteria in \cite{FoscoloHaskins2017} to find solutions. Let $\beta_1^0=\beta_1(s=0)$ and $\beta_2^0=\beta_2(s=0)$. Then condition (i) of \cite[Theorem 4.7]{FoscoloHaskins2017} is
\begin{equation}\label{foscolohaskins(i)}
	\begin{split}
		\frac{E}{2\lambda}\beta^0_1(\beta^0_2-\beta^0_1) + \beta^0_1 &=0\\
		-\frac{E}{2\lambda} \beta^0_2(\beta^0_2-\beta^0_1) + \beta^0_2 &=0.
	\end{split}
\end{equation}
The system (\ref{foscolohaskins(i)}) has the solution $\beta^0_1=\beta^0_2=0$, which does not satisfy condition (ii) of \cite[Theorem 4.7]{FoscoloHaskins2017}. The only other solutions are $\beta^0_1=\frac{2\lambda}{E}$, $\beta^0_2=0$ and $\beta^0_1=0$, $\beta^0_2=\frac{2\lambda}{E}$.

We will look for solutions $\beta_1(s),\beta_2(s)$ to (\ref{foscolohaskinsODE}) which are linear in $s$. By order considerations, we must have that
\begin{equation*}
	\beta_1(s)=-qs+\beta^0_1, \qquad \beta_2(s)=-qs+\beta^0_2
\end{equation*}
for some $q$. Plugging this into (\ref{foscolohaskinsODE}), we obtain the solutions
\begin{equation*}
	\begin{cases}
		\beta_1=-s+\frac{2\lambda}{E}\\
		\beta_2=-s
	\end{cases}\qquad\mathrm{and}\qquad
	\begin{cases}
		\beta_1=s\\
		\beta_2=s+\frac{2\lambda}{E}
	\end{cases}.
\end{equation*}
We have chosen various signs so that the initial value problem is solved for $s\geq 0$; therefore we may reject the former solution and keep only the latter. Hence,
\begin{equation*}
	\sqrt{\alpha}=\frac{\lambda}{\sqrt{E}}\sqrt{\frac{s}{s+\frac{2\lambda}{E}}}.
\end{equation*}
Taking $s=0$ when $t=0$, we integrate $ds=\sqrt{\alpha}\,dt$ to get
\begin{equation*}
	t=\frac{\sqrt{E}}{\lambda}\int_0^s \sqrt{1+\frac{2\lambda}{E\sigma}}\,d\sigma = \sqrt{\frac{2s}{\lambda}}\sqrt{1+\frac{Es}{2\lambda}} + \frac{2}{\sqrt{E}}\arccoth\sqrt{1+\frac{2\lambda}{Es}}.
\end{equation*}
Finally, the last equation of (\ref{newODE}) simplifies to
\begin{equation*}
	su'=-\frac{E}{2\lambda}(\beta_1+\beta_2)+1.
\end{equation*}
With the above solution $\beta_1,\beta_2$, we get $u=-Es/\lambda$, with $u(0)$ arbitrarily chosen to be zero.

\subsection{Boundary conditions and complete soliton}
It remains to check that suitable boundary conditions are satisfied for the soliton metric to extend smoothly to the singular orbit. This has already been analysed for a larger class of metrics in \cite[\S 4]{DancerWang2011}. We will follow their notation and define $f(t)^2=e^{q_1}=\frac{\alpha}{-2A}$, $g_1(t)^2=\beta_1(t)=e^{q_2}$, $g_2(t)^2=\beta_2(t)=e^{q_3}$. We will also normalise the $S^2$ factors in the same way as in \cite{BetancourtDancerWang2016,DancerWang2011}, that is, $\lambda=4$. We have that $f(0)=g_1(0)=0$, so that the singular orbit is a copy of the second factor.

It suffices then to verify up to order 2 \cite[Lemma 3.2]{DancerWang2011} using the formulae
\begin{equation*}
	f(t)=\frac{4}{\sqrt{-2AE}}\left(1+\frac{8}{Es(t)}\right)^{-\frac{1}{2}}, \qquad g_1(t)=\sqrt{s(t)}, \qquad g_2(t)=\sqrt{s(t)+\frac{8}{E}}
\end{equation*}
that $f(t)$ is smooth and odd with $\dot{f}(0)=1$, $g_1(t)$ is smooth and odd with $\dot{g}_1(0)^2=1/2$, and $g_2(t)$ is smooth and even. 
All the above conditions hold, except that $\dot{f}(0)=\frac{1}{\sqrt{-2A}}$, so smoothness requires that $A=-1/2$. The soliton potential $u(t)=-Es(t)/4$ is also smooth and even, and thus extends smoothly to the singular orbit.

To summarise, let $V_i$, $i=1,2$ be two copies of $S^2$, with metrics $h_i=\frac{1}{\sqrt{2}}g_{S^2}$, where $g_{S^2}$ is the standard metric on the unit sphere, and let $\pi_i: V_1\times V_2\rightarrow V_i$ be the projection onto the $i$-th factor. Let $a_i\in H^2(V_i, \mathbb{Z})$ be indivisible cohomology classes. Finally, let $P_q$ denote the principal $U(1)$-bundle over $V_1\times V_2$ with Euler class $b_1\pi_1^*a_1 + b_2\pi_2^*a_2$. Since the boundary conditions require $A=-1/2=-d_i b_i^2/4 = -b_i^2/2$ \cite{BetancourtDancerWang2016}, we may take $b_1=b_2=-1$, so that the Euler class of $P_q$ is $-\pi_1^*a_1 -\pi_2^*a_2$ (which is the same as required by \cite[Theorem 4.20(i)]{DancerWang2011}). Let $\theta$ be the principal $U(1)$-connection on $P_q$ with curvature $\Omega = -\pi_1^*\eta_1 - \pi_2^*\eta_2$, where $\eta_i$ is the Kähler form of $h_i$.

\begin{pro}\label{explicitsoliton}
	For each $E>0$, consider the metric $\hat g = dt^2 + g_t$ on $(0,\infty)\times P_q$, where
	\begin{equation*}
		g_t=f(t)^2 \theta\otimes\theta + g_1(t)^2 \pi_1^*h_1 + g_2(t)^2 \pi_2^*h_2
	\end{equation*}
	is the metric on $P_q$ given by
	\begin{equation*}
		f(t)=\frac{4}{\sqrt{2E}}\left(1+\frac{8}{Es(t)}\right)^{-1/2},\qquad g_1(t)=\sqrt{s(t)},\qquad g_2=\sqrt{s(t)+\frac{8}{E}},
	\end{equation*}
	\begin{equation*}
		t=\sqrt{\frac{s}{2}} \sqrt{1+\frac{Es}{8}} + \frac{2}{\sqrt{E}}\arccoth\sqrt{1+\frac{8}{Es}}.
	\end{equation*}
	With the soliton potential $u(t)=-Es(t)/4$, $\hat g$ extends to a smooth steady complete gradient Ricci soliton on the compactification of $(0,\infty)\times P_q$ by adding $S^2$ at $t=0$.
\end{pro}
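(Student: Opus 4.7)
The plan is to assemble the explicit computations of the preceding subsection into the three standard ingredients required for such a construction: a solution of the Hamiltonian system on the open part $(0,\infty)\times P_q$, smoothness at the singular orbit $t=0$, and geodesic completeness.

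For the first ingredient, by direct substitution the explicit $(\alpha,\beta_1,\beta_2,u)$ constructed from the linear-in-$s$ ansatz solves the first-order subsystem (\ref{firstordersubsystem}) associated to the superpotential of Theorem \ref{classification}(\ref{newsup}). Any integral curve of that first-order subsystem is automatically an integral curve of $\mathcal{H}$ lying in $\{\mathcal{H}=0\}$, so Theorem \ref{hamiltoniantheorem} yields a steady gradient Ricci soliton on $(0,\infty)\times P_q$ of the claimed form.

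For the second ingredient, I would invoke \cite[Lemma 3.2]{DancerWang2011}: smooth extension across the singular orbit on which $f$ and $g_1$ vanish requires $f$ and $g_1$ to be smooth and odd in $t$ with $\dot{f}(0)=1$ and $\dot{g}_1(0)^2=1/2$, while $g_2$ and $u$ must be smooth and even. The key technical point is to check the parity of $s(t)$ near $t=0$. Starting from $dt/ds=\sqrt{1+2\lambda/(Es)}$, the substitution $s=v^2$ gives $t=2\int_0^v\sqrt{(Ew^2+2\lambda)/E}\,dw$, a smooth odd function of $v=\sqrt{s}$ with nonzero derivative at $0$. Inverting, $v=\sqrt{s(t)}$ is smooth and odd in $t$, so $s(t)$ is smooth and even. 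This propagates to the required parities of $g_1=\sqrt{s}$, $g_2=\sqrt{s+8/E}$, $u=-Es/4$, and $f=4(-2AE)^{-1/2}(1+8/(Es))^{-1/2}$. A short leading-order expansion gives $\dot{g}_1(0)^2=1/2$ automatically in the normalisation $\lambda=4$, while $\dot{f}(0)=1/\sqrt{-2A}$ forces $A=-1/2$; via the Euler-class relation $A=-d_ib_i^2/4$ from \cite{BetancourtDancerWang2016}, this in turn pins down $b_1=b_2=-1$ and hence the Euler class $-\pi_1^*a_1-\pi_2^*a_2$ of $P_q$.

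For completeness, as $t$ ranges over $[0,\infty)$ the parameter $s$ ranges monotonically over $[0,\infty)$ as well, and all of $f,g_1,g_2,u$ remain smooth with $g_1,g_2,f>0$ for $t>0$; in particular the metric has no finite-$t$ singularity, and the smooth closure at $t=0$ by collapsing the $U(1)$ fibre and the first $S^2$ factor to a single copy of $S^2$ yields the desired compactification. I expect the main obstacle to be the parity analysis in the second step, since the implicit definition of $s(t)$ through the integral involving $\arccoth$ obscures the smoothness; the change of variables $v=\sqrt{s}$ is what makes it transparent.
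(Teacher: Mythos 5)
Your proposal is correct and follows essentially the same route as the paper: verify that the explicit linear-in-$s$ solution satisfies the first-order subsystem coming from the superpotential of Theorem \ref{classification}(\ref{newsup}), then check the boundary conditions of \cite[Lemma 3.2]{DancerWang2011} (oddness of $f,g_1$ with $\dot f(0)=1$, $\dot g_1(0)^2=1/2$, evenness of $g_2,u$) under the normalisation $\lambda=4$, which forces $A=-1/2$ and hence the Euler class $-\pi_1^*a_1-\pi_2^*a_2$. Your substitution $v=\sqrt{s}$ just makes explicit the parity of $s(t)$ that the paper leaves implicit (note the harmless dropped constant $\sqrt{E}/\lambda$ in your displayed $dt/ds$, which does not affect the parity argument or the stated values at $t=0$).
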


\section{Generalised first integrals}\label{firstintegralsection}

One technique to simplify the problem is to find a certain conserved quantity called a \textit{generalised first integral}. In \cite{BetancourtDancerWang2016}, a number of examples of generalised first integrals are found using various ansätze and heuristics. In this section, we will determine the limitations of some of these methods.

A generalised first integral for the Ricci soliton equations is defined as a function $F: M\rightarrow \mathbb{R}$ satisfying
\begin{equation}\label{GFI}
	\{F,\mathcal{H}\} = \Phi \mathcal{H},
\end{equation}
for some function $\Phi$, where $\{\cdot,\cdot\}$ is the Poisson bracket and $\mathcal{H}$ is the Hamiltonian of the system. Since we require that $\mathcal{H}=0$ for solutions to the soliton equations, we have that $F$ is conserved. We look for solutions to (\ref{GFI}) of the form
\begin{equation*}
	F=\sum_\mathbf{b} F_\mathbf{b} e^{\mathbf{b}\cdot\mathbf{q}}, \qquad \Phi = \sum_{\mathbf{b}} \Phi_\mathbf{b} e^{\mathbf{b}\cdot \mathbf{q}},
\end{equation*}
where, for each $\mathbf{b}$, $F_\mathbf{b}$ and $\Phi_\mathbf{b}$ are polynomials in $\mathbf{p}$. Letting $\psi = \Phi - \frac{1}{2}\mathbf{d}\cdot\nabla_\mathbf{p}F$, we obtain that (\ref{GFI}) is equivalent to the recurrence relation shown in \cite[(4.2)]{BetancourtDancerWang2016}.

The strategy used in \cite{BetancourtDancerWang2016} to find generalised first integrals for the Bryant soliton is to factor the quadratic form
\begin{equation}\label{J}
	J(\textbf{p},\textbf{p})=-\sum_{i=1}^r \frac{p_i^2}{d_i} - \phi \sum_{i=1}^r p_i - \frac{(n-1)}{4}\phi^2
\end{equation}
as
\begin{equation}\label{factorisation}
	J=(\mathbf{c}\cdot \nabla J)\theta,
\end{equation}
where $\mathbf{c}$ is a fixed vector and $\theta$ is some function of $\textbf{p}$. We show that no factorisation exists when $\mathfrak{p}$ is not $\mathrm{Ad}(K)$-irreducible, i.e. when $r\geq 2$. 
\begin{pro}\label{r>1}
	If $r\geq 2$, then there is no factorisation of the quadratic form (\ref{J}) of the form (\ref{factorisation}).
\end{pro}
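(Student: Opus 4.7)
The plan is to analyse the factorisation (\ref{factorisation}) algebraically. Since $J(\textbf{p},\textbf{p})$ is homogeneous of degree two in $\textbf{p}$, its gradient is a vector of linear forms, so $L:=\textbf{c}\cdot \nabla J$ is a homogeneous linear form in $\textbf{p}$. If $L\equiv 0$, then (\ref{factorisation}) forces $J\equiv 0$, contradicting (\ref{J}). Thus $L$ is a nonzero linear form, and the identity $J=L\cdot\theta$ with $\theta$ smooth on $\mathbb{R}^{r+1}$ forces $J$ to vanish on the hyperplane $\{L=0\}$. A linear change of coordinates making $L$ one of the coordinates, followed by polynomial division, then shows that $L$ divides $J$ in the polynomial ring, so $\theta$ is necessarily a polynomial; by degree considerations $\theta$ is itself a linear form. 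Hence any factorisation of the form (\ref{factorisation}) produces a factorisation of $J$ as a product of two linear forms.

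It remains to show that $J$ is irreducible as a polynomial whenever $r\geq 2$. This is equivalent to the associated symmetric matrix of $J$ having rank at least three, since a quadratic form factors into linear factors only if its matrix has rank at most two. In the basis $(p_1,\dots,p_r,\phi)$, this matrix takes the block form
\begin{equation*}
M=-\begin{pmatrix} \diag(1/d_1,\dots,1/d_r) & \frac{1}{2}\mathbf{1}_r \\ \frac{1}{2}\mathbf{1}_r^{\top} & (n-1)/4 \end{pmatrix}.
\end{equation*}
For $r\geq 3$, the top-left diagonal block already has rank $r\geq 3$, immediately giving the conclusion. For $r=2$, a direct $3\times 3$ determinant computation, using $n=d_1+d_2$, yields $\det M=1/(4d_1 d_2)\neq 0$, so $M$ has full rank three in this case as well.

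The main subtlety to watch is the polynomial-division step, which requires $\theta$ to be at least smooth on the zero locus of $L$; otherwise one could trivially set $\theta=J/L$ as a rational function on the open set $\{L\neq 0\}$ and there would be nothing to prove. Once the class of admissible $\theta$ is fixed (as is implicit in \cite{BetancourtDancerWang2016}), the argument reduces cleanly to the rank computation above, with the only mildly non-automatic point being that for $r=2$ one must invoke $n=d_1+d_2$ explicitly rather than simply cite the rank of the diagonal block.
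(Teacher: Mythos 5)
Your proof is correct, but it takes a genuinely different route from the paper's. Both arguments begin by reducing $\theta$ to a linear form — you do this via a divisibility argument (if $\theta$ is defined on all of the momentum space then $J$ vanishes on the hyperplane $\{\mathbf{c}\cdot\nabla J=0\}$, so the linear form $\mathbf{c}\cdot\nabla J$ divides $J$ and the quotient is linear by degree), whereas the paper simply asserts that degree considerations force $\theta$ to be a first-order polynomial; your justification of this step is the more careful of the two, and your closing remark correctly identifies that some global regularity assumption on $\theta$ is needed for the statement to be non-vacuous. The core contradiction is then obtained differently: the paper expands $(\mathbf{c}\cdot\nabla J)\theta$ and compares the coefficients of $p_1^2$, $p_1p_2$ and $p_2^2$, arriving at $0=t_2^2/d_1+t_1^2/d_2$, which is a signature (negative-definiteness) obstruction on the $(p_1,p_2)$-plane and uses that the $t_i$ are real; you instead observe that any product of two linear forms has rank at most $2$ and show that the symmetric matrix of $J$ has rank at least $3$ whenever $r\geq 2$ — immediate from the diagonal block when $r\geq 3$, and from $\det M = 1/(4d_1d_2)\neq 0$ (using $n=d_1+d_2$) when $r=2$, a determinant I have checked and which is correct. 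Your rank obstruction is field-independent and rules out \emph{any} factorisation of $J$ into two linear forms, not only those whose first factor is $\mathbf{c}\cdot\nabla J$, so it proves slightly more; the paper's computation is shorter and avoids the matrix setup. One small wording point: your claim that irreducibility of $J$ is \emph{equivalent} to the matrix having rank at least three is an overstatement over $\mathbb{R}$ (a definite rank-two form such as $p_1^2+p_2^2$ is irreducible over $\mathbb{R}$), but the implication you actually use — rank at least three precludes a factorisation into linear forms — is correctly stated and is all that is needed.
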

\begin{proof}
	We may compute
	\begin{equation*}
		\nabla J = -\left(\frac{2p_1}{d_1}+\phi,\: \dots\:,\: \frac{2p_r}{d_r}+\phi,\: \sum_{i=1}^r p_i + \frac{n-1}{2}\phi\right).
	\end{equation*}
	Since the form $J$ is a homogeneous degree 2 polynomial in $\mathbf{p}$ and $\nabla J$ is homogeneous of degree 1, the most general form for $\theta$ would be a first-order polynomial $\theta = t_1p_1 + \dots + t_r p_r + t_{r+1}\phi$ for some constants $t_1, \dots, t_{r+1}$. Then (\ref{factorisation}) becomes
	\begin{align*}
		-J &= \left[ \left(\sum_{i=1}^r c_i\left(\frac{2p_i}{d_i} + \phi\right)\right) + c_{r+1}\left( \sum_{i=1}^r p_i + \frac{n-1}{2}\phi \right)\right] \left[ \left(\sum_{i=1}^r t_ip_i\right) + t_{r+1}\phi\right]\\
		&=\left[ \left(\sum_{i=1}^r p_i\left( \frac{2c_i}{d_i} + c_{r+1}\right)\right) + \phi\left( \left(\sum_{i=1}^r c_i\right) + c_{r+1}\frac{(n-1)}{2}\right)\right]\left[ \left(\sum_{i=1}^r t_ip_i\right) + t_{r+1}\phi\right].
	\end{align*}
	In particular, since $r\geq 2$, we may compare the coefficients of $p_1^2$, $p_1p_2$ and $p_2^2$ on both sides of the equation to obtain
	\begin{align*}
		\frac{1}{d_1}&= \left(\frac{2c_1}{d_1} + c_{r+1}\right) t_1\\
		0 &= \left(\frac{2c_1}{d_1} + c_{r+1}\right) t_2 + \left( \frac{2c_2}{d_2} + c_{r+1}\right) t_1\\
		\frac{1}{d_2}&= \left( \frac{2c_2}{d_2} + c_{r+1}\right)t_2.
	\end{align*}
	Since $d_1, d_2\geq 1$, we have $t_1, t_2\neq 0$. Then we may divide through and substitute the first and third equations into the second to obtain $0=t_2^2/d_1 + t_1^2/d_2$, a contradiction.
\end{proof}

\medskip

\subsection{Generalised first integrals for the Bryant soliton}

In \cite{BetancourtDancerWang2016}, generalised first integrals for the $n=1$ and $n=4$ Bryant solitons are found. The latter is used to find an explicit formula for the 5-dimensional Bryant soliton. In this subsection, we show that the approach used in \cite{BetancourtDancerWang2016} to obtain a generalised first integral in the $n=4$ case does not work for any other value of $n$.

Recall that the hypersurfaces of the Bryant soliton are $n$-spheres viewed as the orbits of an $SO(n+1)$-action with isotropy subgroup $SO(n)$. The metric ansatz is
\begin{equation*}
	\bar{g} = dt^2 + h(t)^2g_{S^n},
\end{equation*}
where we set $h(t)^2=e^{q_1}$. We have $\tilde{\mathcal{W}}=\{(0,0),(-1,0)\}$ and $A_{(-1,0)}=n(n-1)$, as well as the extended vectors $\mathbf{d} = (n,-2)$, $\mathbf{q}=(q,u)$ and $\mathbf{p}=(p,\phi)$, and
\begin{equation*}
	J=-\left( \frac{p^2}{n} + p\phi + \frac{n-1}{4}\phi^2\right),
\end{equation*}
which factors as $J=(\mathbf{c}\cdot\nabla J)\theta$, where
\begin{equation}\label{ctheta}
	\mathbf{c}=\left( -\frac{1}{2}(n+\sqrt{n}), 1\right) \qquad \text{and} \qquad \theta = - \left( \frac{p}{\sqrt{n}} + \frac{\sqrt{n}-1}{2}\phi\right).
\end{equation}

\medskip

\textit{Remark.} We make an observation concerning the $n=4$ case which is treated in \cite{BetancourtDancerWang2016}, and we will use their notation. The provided first integral $F$ in \cite[(4.3)]{BetancourtDancerWang2016} satisfies $\{F,\mathcal{H}\}=0$. However, their choice of $\Gamma_0$ and $\Gamma_w$ is not unique, since it only has to satisfy $J\Gamma_v + \tau \theta^{s'} = J\Gamma_w + \rho\theta^s$. Their choice is $\Gamma_0 (= \Gamma_v) = -1$ and $\Gamma_w = 0$, but we may add some constant $b$ to both to get a new pair $\Gamma_0' = \Gamma_0 + b$, $\Gamma_w' = \Gamma_w + b$. As a result,
\begin{equation*}
	F_{\mathbf{c}+\mathbf{d}+\mathbf{z}}' = F_{\mathbf{c}+\mathbf{z}+\mathbf{d}} - bA_z \qquad \mathbf{z}\in\{0,\mathbf{w}\}
\end{equation*}
\begin{equation*}
	F'_{\mathbf{c}} = F_\mathbf{c} + bJ.
\end{equation*}
Since $A_0=E$ and $A_w = n(n-1)$, this leads to the difference in the first integrals being
\begin{equation*}
	F' - F = J e^{\mathbf{c}\cdot \mathbf{q}} - Ee^{(\mathbf{c}+\mathbf{d})\cdot\mathbf{q}} - n(n-1)e^{(\mathbf{c}+\mathbf{d}+\mathbf{w})\cdot \mathbf{q}}.
\end{equation*}
Computing the Poisson bracket, we get
\begin{equation*}
	\{F',\mathcal{H}\} = \{F'-F,\mathcal{H}\} = -\frac{b}{2} e^{-3q+u} (p+\phi)\mathcal{H},
\end{equation*}
so that $F'$ is also a generalised first integral.

\medskip

In situations where $\tilde{\mathcal{W}}$ contains two vectors, an approach of \cite[pp.9-10]{BetancourtDancerWang2016} is to find real-valued functions $\tau(\mathbf{p}), \rho(\mathbf{p})$ and constants $\Gamma_v, \Gamma_w$ such that 
\begin{equation}\label{twovectorcondition}
	\begin{gathered}
		(\mathbf{v}+\mathbf{d})\cdot \nabla\theta = -\frac{1}{s'} \qquad (\mathbf{w}+\mathbf{d})\cdot \nabla\theta = -\frac{1}{s}\\
		J\Gamma_v + \tau\theta^{s'} = J\Gamma_w + \rho\theta^s\\
		(\mathbf{v}+\mathbf{d})\cdot \nabla\tau = (\mathbf{w}+\mathbf{d})\cdot \nabla\rho = 0.
	\end{gathered}
\end{equation}
Then a solution to the recursion is
\begin{equation*}
	F_\mathbf{c} = J\Gamma_v + \tau \theta^{s'} (= J\Gamma_w + \rho\theta^s), \quad F_\mathbf{c} = \theta \psi_\mathbf{c}, \quad F_{\mathbf{c}+\mathbf{v}+\mathbf{d}} = -A_v \Gamma_v, \quad F_{\mathbf{c}+\mathbf{w}+\mathbf{d}} = -A_w \Gamma_w,
\end{equation*}
with all other terms zero. The generalised first integral is then given by $F=\sum_\mathbf{c} F_\mathbf{c} e^{\mathbf{c}\cdot\mathbf{q}}$.

In the case of the Bryant soliton, $\mathbf{v}=(0,0)$ and $\mathbf{w}=(-1,0)$. With the factorisation (\ref{ctheta}), one calculates \cite{BetancourtDancerWang2016}
\begin{equation*}
	\mathbf{d}\cdot\nabla\theta = -1, \qquad (\mathbf{d}+\mathbf{w})\cdot\nabla\theta = -1+\frac{1}{\sqrt{n}} = -\frac{\sqrt{n}-1}{\sqrt{n}},
\end{equation*}
implying that $s'=1$ and $s=\frac{\sqrt{n}}{\sqrt{n}-1}$. The condition $\mathbf{d}\cdot\nabla\tau = (n,-2)\cdot \nabla\tau =0$ is a first-order linear PDE. By abuse of notation, we will denote its solution by $\tau(p,\phi)=\tau(2p + n\phi)$ for a real-valued differentiable function $\tau(\cdot)$. Similarly, $(\mathbf{d}+\mathbf{w})\cdot\nabla\rho = 0$ implies that $\rho(p,\phi) = \rho(2p + (n-1)\phi)$. It only remains to check the middle equation of (\ref{twovectorcondition}), which is given by
\begin{equation*}
	J\Gamma_0 + \tau(2p+n\phi)\theta = J\Gamma_w + \rho(2p+(n-1)\phi)\theta^{\frac{\sqrt{n}}{\sqrt{n}-1}}.
\end{equation*}
Using the factorisation $J=(\mathbf{c}\cdot\nabla J)\theta = \left( \frac{p}{\sqrt{n}} + \frac{\sqrt{n}+1}{2}\phi\right)\theta$ and the coordinates $(x,y)$ defined by $x=2p+n\phi$ and $y=2p+(n-1)\phi$, we get
\begin{equation*}
	\frac{\Gamma_0-\Gamma_w}{2\sqrt{n}} \left((\sqrt{n}+1)x+y\right) + \tau(x) = \rho(y) \left(\frac{1}{2\sqrt{n}}\right)^{\frac{1}{\sqrt{n}-1}} \left( (\sqrt{n}-1)x-y\right)^\frac{1}{\sqrt{n}-1}.
\end{equation*}
Notice that either $\rho\equiv 0$, or $\rho(y)\neq 0$ for each $y$. Therefore we may divide by $\rho$ on both sides, then differentiate with respect to $x$. We obtain an equality of the form
\begin{equation*}
	F(x)G(y) = \left((\sqrt{n}-1)x - y\right)^\frac{2-\sqrt{n}}{\sqrt{n}-1}.
\end{equation*}
If $n\neq 4$, we take the $\frac{\sqrt{n}-1}{2-\sqrt{n}}$-th power of both sides, and see that $F,G$ are constant, and therefore that $\rho$ is constant, in which case we must have $n=4$, contradiction.

Therefore this approach can only obtain a generalised first integral for the Bryant soliton in fibre dimension $n=4$.

\bigskip
\textbf{Acknowledgements.} I am grateful to Andrew Dancer and Jason Lotay for valuable discussions, guidance and comments on the manuscript.

\bibliography{refssoliton}

\providecommand{\bysame}{\leavevmode\hbox to3em{\hrulefill}\thinspace}
\providecommand{\MR}{\relax\ifhmode\unskip\space\fi MR }
\providecommand{\MRhref}[2]{%
  \href{http://www.ams.org/mathscinet-getitem?mr=#1}{#2}
}
\providecommand{\href}[2]{#2}
\begin{thebibliography}{10}

\bibitem{Berard-Bergery1982}
Lionel B\'{e}rard~Bergery, \emph{Sur de nouvelles vari\'{e}t\'{e}s
  riemanniennes d'{E}instein}, Inst. \'{E}lie Cartan, vol.~6, Univ. Nancy,
  Nancy, 1982, pp.~1--60.

\bibitem{BetancourtDancerWang2016}
Alejandro Betancourt de~la Parra, Andrew~S. Dancer, and McKenzie~Y. Wang,
  \emph{A {H}amiltonian approach to the cohomogeneity one {R}icci soliton
  equations and explicit examples of non-{K}\"{a}hler solitons}, J. Math. Phys.
  \textbf{57} (2016), no.~12, 122501, 17.

\bibitem{DancerWang2000}
Andrew~S. Dancer and McKenzie~Y. Wang, \emph{The cohomogeneity one {E}instein
  equations from the {H}amiltonian viewpoint}, J. Reine Angew. Math.
  \textbf{524} (2000), 97--128.

\bibitem{DancerWang2005}
\bysame, \emph{Superpotentials and the cohomogeneity one {E}instein equations},
  Comm. Math. Phys. \textbf{260} (2005), no.~1, 75--115.

\bibitem{DancerWang2008}
\bysame, \emph{Classification of superpotentials}, Comm. Math. Phys.
  \textbf{284} (2008), no.~3, 583--647.

\bibitem{DancerWang2011b}
\bysame, \emph{Classifying superpotentials: three summands case}, J. Geom.
  Phys. \textbf{61} (2011), no.~3, 675--692.

\bibitem{DancerWang2011}
\bysame, \emph{On {R}icci solitons of cohomogeneity one}, Ann. Global Anal.
  Geom. \textbf{39} (2011), no.~3, 259--292.

\bibitem{EschenburgWang2000}
J.-H. Eschenburg and McKenzie~Y. Wang, \emph{The initial value problem for
  cohomogeneity one {E}instein metrics}, J. Geom. Anal. \textbf{10} (2000),
  no.~1, 109--137.

\bibitem{FoscoloHaskins2017}
Lorenzo Foscolo and Mark Haskins, \emph{New {$G_2$}-holonomy cones and exotic
  nearly {K}\"{a}hler structures on {$S^6$} and {$S^3\times S^3$}}, Ann. of
  Math. (2) \textbf{185} (2017), no.~1, 59--130.

\bibitem{WangZiller1986}
McKenzie~Y. Wang and Wolfgang Ziller, \emph{Existence and nonexistence of
  homogeneous {E}instein metrics}, Invent. Math. \textbf{84} (1986), no.~1,
  177--194.

\end{thebibliography}
\bibliographystyle{amsplain}
\end{document}